\def\cU{\mathcal U}
\def\cA{\mathcal A}
\newcommand{\cL}{\mathcal L}
\newcommand{\norm}[1]{\| #1\|}
\def\cA{          \mathcal A}
\def\cD{          \mathcal D}
\let\cal\mathcal
\def \R{{\mathbb R}}
\def \N{{\mathbb N}}
\newcommand{\T}{{\mathbb T}}
\newcommand{\prf}{{\begin{proof}}}
\newcommand{\epf}{{\end{proof}}}
\newcommand{\PP}{{\mathbb P}}
\newcommand{\CC}{{\mathbf C}}
\def\bA{          \mathbf A}
\def\cA{          \mathcal A}
\def\cB{          \mathcal B}
\def\cC{          \mathcal C}
\def\cD{          \mathcal D}
\def\EV{          \mathbb E}
\def\S{          \mathbb{S}}
\def\PF{          \mathcal P}
\DeclareMathOperator{\diff}{Diff}
\newtheorem{prop}{\sc Proposition}
\newtheorem{lemma}{\sc lemma}
\newtheorem{cor}{\sc corollary}
\theoremstyle{definition}
\def\bee{\begin{equation}}
\def\eee{\end{equation}}
\newtheorem{defi}{\sc Definition}
\theoremstyle{rema}
\newtheorem{rema}{\sc Remark}
\newcommand{\pdvr}[2]
{\dfrac{\partial^{#2} #1}{\partial \theta^{#2_1} \partial r^{#2_2}}}
\newcommand{\pdvrs}[2]
{\partial^{#2} #1 /\partial \theta^{#2_1} \partial r^{#2_2}}
\newcommand{\ary}{\begin{eqnarray}}
\newcommand{\eary}{\end{eqnarray}}
\newcommand{\aryst}{\begin{eqnarray*}}
\newcommand{\earyst}{\end{eqnarray*}}
\newcommand{\enmt}{\begin{enumerate}}
\newcommand{\eenmt}{\end{enumerate}}
\newtheorem{thm}{Theorem}
\numberwithin{equation}{section}
\author{Zhiyuan Zhang}
\begin{document}

\title[On differentiability of SRB measures]{On the smooth dependence of SRB measures for partially hyperbolic systems}

\date{\today}
\maketitle

\begin{abstract}
In this paper, we study the differentiability of SRB measures for partially hyperbolic systems.

We show that for any $s \geq 1$, for any integer $\ell \geq 2$, any sufficiently large $r$, any $\varphi \in C^{r}(\T, \R)$ such that the map $f : \T^2 \to \T^2, f(x,y) = (\ell x, y + \varphi(x))$ is $C^r-$stably ergodic, there exists an open neighbourhood of $f$ in $C^r(\T^2,\T^2)$ such that any map in this neighbourhood has a unique SRB measure with $C^{s-1}$ density, which depends on the dynamics in a $C^s$ fashion.

We also construct a $C^{\infty}$ mostly contracting partially hyperbolic diffeomorphism  $f: \T^3 \to \T^3$ such that all $f'$ in a $C^2$ open neighbourhood of $f$ possess a unique SRB measure $\mu_{f'}$ and the map $f' \mapsto \mu_{f'}$ is strictly H\"older at $f$, in particular, non-differentiable. This gives a partial answer to Dolgopyat's Question 13.3 in \cite{Do1}.

\end{abstract}

\tableofcontents
\addtocontents{toc}{\protect\setcounter{tocdepth}{1}}

\section{Introduction}

There is a lot of interest in understanding the ergodic aspect of partially hyperbolic systems. For conservative dynamics, one of the fundamental questions is proving ergodicity. In this direction, we have stable ergodicity conjecture which attempts to describe the generic picture of volume preserving partially hyperbolic systems. For non-conservative dynamics, one tries to describe the dynamics through studying distinguished invariant measures. A prominent role is played by SRB measures. 
\begin{defi}
For any $C^1$ diffeomorphism $f: X \to X$ on a compact Riemannian manifold $X$, a probability measure $\mu$ on $X$ is called a SRB measure for $f$ if there exists a subset $Y(\mu) \subset X$ of positive Lebesgue measure such that for any $x \in Y(\mu)$, any continuous function $\phi$ on $X$, $\frac{1}{n}\sum_{i=0}^{n-1}\phi(f^{i}(x))$ converges to $\int \phi d\mu$ as $n$ tends to infinity.
\end{defi}
A satisfactory understanding of SRB measures for generic dynamics is currently lacking, despite of having some deep results in several models, see \cite{AlBoVi, BeYo, Do1, RoRoTaUr, Ts2} just to list a few. 

For partially hyperbolic systems, the existence of SRB measures is proved for several cases: 1. mostly expanding dynamics in \cite{AlBoVi}; 2. mostly contracting dynamics in \cite{BoVi, Do1}; 3. generically for partially hyperbolic surface endomorphisms in \cite{Ts2}. Known uniqueness result of SRB measures, for example in \cite{Do1, RoRoTaUr}, usually assume some form of transitivity. An even more refine question is the differentiability of SRB measures.
In \cite{Do1}, it is shown that for partially hyperbolic, dynamically coherent, u-convergent mostly contracting $f$ on a three-dimensional manifold, there is a unique SRB measure $\nu_f$. If in addition that $f$ is also stably dynamically coherent, then $f$ is stably mostly contracting, and the SRB measure is known to exhibit H\"older dependence on the dynamics. In \cite{Do1} Question 13.3, Dolgopyat asked whether or not for mostly contracting dynamics $f$, the map $f \mapsto \nu_f$ is actually smooth ? We refer the readers to \cite{DeLi, DoViYa} for recent advances in the study of mostly contracting dynamics.

The question of the differentiability of SRB measures had been previously studied by several authors. It has its roots in statistical physics, and has applications in averaging theory and the removability of zero Lyapunov exponents.
The differentiability of SRB measures were previously known for Axiom A diffeomorphisms by \cite{Ru}. For a class of rapidly mixing, partially hyperbolic systems with isometric center dynamics, the differentiability is proved by Dolgopyat in \cite{Do2}. On the other hand, to the best of our knowledge, the non-differentiability of SRB measures ( when the existence and uniqueness is proved ) is unknown for partially hyperbolic systems, despite of having some speculations ( see Problem 4 in \cite{ChDo} ). In fact, the breakdown of the differentiability is poorly understood for multidimensional dynamics in general. For one-dimensional dynamics, Whitney-H\"older dependence is proved for a family of smooth unimodal maps in \cite{BaBeSc}, with matching upper and lower bounds for the H\"older exponents. For more results on the nondifferentiability of SRB measures for one-dimensional dynamics, we refer the reader to the references in \cite{Ba}. We mention that in \cite{Ba}, the study of the breakdown of the differentiability of SRB measures for higher dimensional dynamics was proposed as a future research direction.

One of the purpose of this paper is to prove the existence, uniqueness and differentiability of SRB measures for perturbations of a class of area-preserving endomorphisms which are special cases of those studied in \cite{Do3}. We mention a recent work \cite{Ga} on a similar class of systems. We note that in contrast to \cite{Do1, RoRoTaUr}, our method does not directly use any form of transitivity for the map in question.
  On the other hand, we give a method of constructing partially hyperbolic diffeomorphisms and endomorphisms at which the set of uGibbs states ( see Definition \ref{PartII2ugibbssrb} and the footnote ) is not differentiable. We can also require our diffeomorphism to be mostly contracting satsfying the conditions in Theorem II \cite{Do1}, which is known to imply the uniqueness of SRB measure/ uGibbs state. This gives a partial answer to Question 13.3 in \cite{Do1} : we have an example at which linear response breaks down, but we know no non-trivial example of mostly contracting system where linear response holds.
  Moreover by Theorem I in \cite{Do1}, the mostly contracting diffeomorphism we contruct is exponentially mixing with respect to the unique SRB measure, for H\"older observables.  On the other hand, we mention that linear response can appear for slowly mixing systems, see \cite{BaTo}.

\section{Main results}

\begin{defi}
Let $M$ be a compact Riemannian manifold.
Given integers $r \geq s \geq 1$, and an open set $\cal V \subset C^r(M, M)$. We say that $\{f_t\}_{t \in (-1,1)}$ is a $C^s$ family in $\cal V$ through $f_0$, if $f_t \in \cal V$ for any $t \in (-1,1)$, and 
\aryst
\norm{\{f_t\}_{t \in (-1,1)}}_{s,r} := \sup_{0 \leq i \leq s, 0 \leq j \leq r, (t,x) \in I \times M} \norm{\partial_{t}^{i} \partial_x^{j}f_t(x)} < \infty
\earyst

Given any integers $r \geq r' \geq 2$ and an open set $\cU \subset C^r(M,M)$. Assume that for each $f \in \cU$ there exists a unique SRB measure $\mu_f$. Then we say that $f \mapsto \mu_f$ is $C^{r'}$ restricted to $\cU$, if for any $C^{r}$ family $\{f_t\}_{t \in (-1,1)}$ in $\cU$ through $f$, for any $\phi \in C^{r}(M)$, the map $t \mapsto \int \phi d\mu_{f_t}$ is $C^{r'}$ at $t =0$.
\end{defi}

We will prove the existence, uniqueness and differentiability of SRB measures for endomorphisms close to a class of skew-products which we now define. 

For any integers $r \geq 2$, $\ell \geq 2$,  any $\varphi \in C^r( \T ,\R)$, we define a  $C^{r}$ map $f : \T^2 \to \T^2$ by $f(x,y) = (\ell x, y + \varphi(x)), \forall (x,y)\in \T^2$.
We denote by $\cal U^{rot}_{\ell, r}$ the set of $C^r$ maps defined as above for all $\varphi \in C^r(\T,\R)$. We say that $f$ is $C^r-$stably ergodic in $ \cU^{rot}_{\ell, r}$ if all $f' \in  \cU^{rot}_{\ell, r}$ in a $C^r$ open neighbourhood of $f$ are ergodic.

\begin{thm}\label{PartII2thm diff u-G}
For each $r \geq 20$, $1 \leq r' \leq \frac{r}{2} - 9$, $\ell \geq 2$, for any $f_{rot}\in \cU^{rot}_{\ell, r}$ that is $C^r-$stably ergodic in $\cal U^{rot}_{\ell,r}$, there is a $C^{r}$ open neighbourhood of $f_{rot}$ in $C^{r}(\T^2, \T^2)$, denoted by $\cal U$, such that the following is true. Any $f \in \cU$ admits a unique SRB measure $\mu_{f'}$ having $C^{r'-1}$ density, and $f \mapsto \mu_f$ is $C^{r'}$ restricted to $\cU$.

\end{thm}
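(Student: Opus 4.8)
The plan is to conjugate the family of endomorphisms $f\in\cU$ into a transfer-operator framework adapted to the dominated splitting. First I would note that for $f_{rot}\in\cU^{rot}_{\ell,r}$ the horizontal direction is uniformly expanding (rate $\ell$) and the vertical direction is the neutral center, so after a small $C^r$ perturbation every $f\in\cU$ is a partially hyperbolic endomorphism admitting a $C^{r-1}$ expanding invariant cone field and a center foliation that is a $C^{r-1}$ graph near the vertical fibers. Using the expansion, I would set up the pushforward (Perron--Frobenius) operator $\cL_f$ acting on densities on $\T^2$, quotiented along the stable (expanding) direction, so that $\cL_f$ becomes an operator on functions of the center variable with a kernel that is $C^{r-1}$ in the base. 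The SRB measure corresponds to the (normalized) fixed point of $\cL_f$; by the stable ergodicity hypothesis (which survives to $\cU$ after shrinking) the leading eigenvalue $1$ is simple with a spectral gap on a suitable anisotropic Banach space $\cB$ of distributions that are $C^{r'-1}$ along the center and dual-distributional along the expanding direction. This gives existence and uniqueness of $\mu_f$ and, reading off the density from the center-smooth fixed point, the claimed $C^{r'-1}$ regularity of the density.

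Next I would establish that $f\mapsto\cL_f$ is $C^{r'}$ as a map from $\cU$ (with the $C^r$ topology) into $\mathcal{L}(\cB,\cB')$ for an appropriate weaker space $\cB'$ — this is the usual loss-of-regularity phenomenon: each $t$-derivative of $\cL_{f_t}$ costs one derivative on the test functions, which is why the budget $r'\le r/2-9$ appears (one also pays for the regularity of the invariant cone/foliation, hence the extra constant $9$ and the factor $2$). With the spectral gap in hand, the simple eigenvalue $1$ and its eigenprojector $\Pi_{f_t}$ depend on $t$ as smoothly as $\cL_{f_t}$ does, by the standard perturbation theory of Kato adapted to the $\cB\hookrightarrow\cB'$ setting (Gouëzel--Liverani style). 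Then for $\phi\in C^r(\T^2)$ one writes $\int\phi\,d\mu_{f_t}=\langle\phi,\Pi_{f_t}1\rangle$ and differentiates $r'$ times in $t$, checking at each stage that the resolvent $(\id-\cL_{f_t})^{-1}$ applied off the eigenspace lands in a space on which the next derivative of $\cL$ still acts — this is where the quantitative relation between $r$, $r'$, and the number of available derivatives must be tracked carefully.

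The main obstacle I expect is not the abstract perturbation theory but the construction of the anisotropic Banach space $\cB$ that is simultaneously (i) adapted to a non-invertible map (one cannot pull back along the expanding fibers, so the space must be built from the pushforward side, controlling densities transverse to the center leaves), (ii) robust under $C^r$ perturbation of $f$ (the center foliation moves, so the norm must be defined via a perturbation-independent family of admissible center-like curves or via mollification at a fixed scale), and (iii) precise enough to give a clean spectral gap with the eigenprojector taking values in $C^{r'-1}$ densities. A secondary difficulty is promoting ergodicity of $f_{rot}$ in the one-parameter family $\cU^{rot}_{\ell,r}$ to a genuine spectral gap for the full two-dimensional perturbation $\cU$; here I would argue that the quotiented transfer operator is quasi-compact with spectral radius $1$, that $1$ is the only eigenvalue on the unit circle because a peripheral eigenvalue $\neq 1$ or a non-simple $1$ would produce either a second SRB measure or a non-trivial $f$-invariant function, and then use the density of $\cU^{rot}$-type maps together with ergodicity of $f_{rot}$ and upper semicontinuity of the peripheral spectrum to exclude this on a neighbourhood. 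Once the gap is uniform on $\cU$, the differentiability statement follows from the perturbation-theoretic scheme above.
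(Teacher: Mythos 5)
Your high-level scaffolding (Perron--Frobenius operator, anisotropic Banach space, spectral gap, Gou\"ezel--Liverani perturbation theory applied to the resolvent) matches the architecture of the paper's Proposition~\ref{PartII2prop diff}, but there is a genuine gap at the most important step: you never identify \emph{why} the transfer operator is quasi-compact with small essential spectral radius uniformly over a full $C^r$-neighbourhood $\cU$. You attribute this to ``the stable ergodicity hypothesis (which survives to $\cU$ after shrinking)'', but stable ergodicity of $f_{rot}$ within the one-parameter family $\cU^{rot}_{\ell,r}$ is a measure-theoretic condition on a codimension-infinite submanifold, and it does not transfer to quasi-compactness for arbitrary $C^r$ perturbations on its own. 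The paper's essential input is Tsujii's transversality exponent $m(f)$ (Definition~\ref{PartII2cone and transv}): stable ergodicity is equivalent, via Dolgopyat's infinitesimal non-integrability (Theorem~3.3 of \cite{Do3}), to $m(f)<1$ (Proposition~\ref{PartII2prop density}); the function $f\mapsto m(f)$ is upper semicontinuous in $C^1$, so $m<1$ holds on an honest $C^r$-neighbourhood; and it is precisely $m(f')<1$ that feeds into the Lasota--Yorke inequality (Proposition~\ref{PartII2prop ly ineq chart}) to bound the essential spectral radius by $m_0(f',h,p,q)<1$. Your later attempt to ``promote ergodicity \dots to a genuine spectral gap'' via upper semicontinuity of the peripheral spectrum presupposes quasi-compactness on $\cU$, which is exactly what is missing; without a quantitative transversality estimate one cannot even start the Hennion argument, and indeed for non-ergodic skew products the theorem fails, so some non-soft input of this type is forced.

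Two secondary issues. First, your anisotropic space is oriented the wrong way: you describe it as ``$C^{r'-1}$ along the center and dual-distributional along the expanding direction'', but the SRB density should be \emph{smoother} along the expanding (horizontal) direction and \emph{less} regular along the center (vertical). The paper's $W_{\Theta,p,q}$ with $p>q\ge 1$ puts the higher Sobolev weight $p$ on the Fourier cone $\CC_+$ near the horizontal frequencies (regularity in $x$), and the smaller weight $q$ on $\CC_-$ near vertical frequencies (less regularity in $y$); moreover both indices are positive, which is a departure from Baladi--Tsujii and is needed to conclude that the fixed point is a genuine $C^{r'-1}$ density rather than a distribution (Remark after Lemma~\ref{PartII2lem norm unit partition}). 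Second, ``quotienting along the expanding direction so $\cL_f$ becomes an operator on functions of the center variable'' is not robust: for a general $f\in\cU$ there is no smooth invariant center foliation to quotient by, and the paper does not attempt this --- it works with the full Perron--Frobenius operator on $\T^2$ acting on the anisotropic Sobolev scale $\cB^0\supset\cdots\supset\cB^{r'+1}$ defined by \eqref{PartII2give cB k}.
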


By Theorem 3.4 in \cite{Do3}, we know that the set of maps  in $\cU^{rot}_{\ell, r}$ that is $C^r-$stably ergodic in $\cal U^{rot}_{\ell,r}$ form a $C^r$ open and dense subset of $\cU^{rot}_{\ell, r}$. It is obvious that our theorem does not extend to nonergodic $f^{rot}$, so in this aspect our theorem is optimal.
 By Theorem 3.3 in \cite{Do3}, for maps in $\cU^{rot}_{\ell, r}$, being $C^r-$stably ergodic in $\cal U^{rot}_{\ell,r}$ is equivalent to being  infinitesimally non-integrable, defined in \cite{Do3}.

Our method for proving Theorem \ref{PartII2thm diff u-G} is based on the work of Tsujii in his study of decay estimates. Our new input emphasis on using higher regularity and the weak perturbation theory of transfer operators in \cite{GoLi, KeLi}. We believe our method for proving the uniqueness of SRB would be of independent interest.

Our next result is on the nondifferentiability of SRB measures. As we mentioned above, the existence of SRB measure in general is already difficult. So in order to state our theorem in a more general context, we recall the following more general notion. 
\begin{defi} \label{PartII2ugibbssrb}
Let $f: X \to X$ be a $C^2$ partially hyperbolic system on a compact Riemannian manifold $X$. We denote by $uGibbs(f)$ the set of $f-$invariant Borel probability measure $\mu \in \cal M(X)$ such that $\mu$ has absolutely continuous conditional measures on unstable manifolds. \footnote{In some places this notion is also called SRB measure. In our paper, we reserve the term SRB measure for those with a basin of positive Lebesgue measure. }
\end{defi}
We will establish examples of mostly contracting partially hyperbolic systems stably having a unique SRB measure, while the SRB measures depend on the dynamics in a strictly H\"older fashion.  We can even make the Holder exponent to be arbitrarily small. 

\begin{thm}\label{PartII2thm nondiff}
For any $r=2,3, \cdots, \infty$, for any $\theta \in (0,1)$, there is a $C^{r}$ partially hyperbolic diffeomorphism ( resp. endomorphism ) $f : X \to X$ on a compact Riemannian manifold $X$ such that the following is true. There is a $C^{r}$ family $\{f_t\}_{t \in (-1,1)}$ in the space of $C^{r}$ partially hyperbolic diffeomorphisms ( resp. endomorphisms ) through $f$, and a $C^{r}$ function $\phi : X \to \R$ such that for any  $\{\mu_t \in uGibbs(f_t)\}_{t \in (-1,1)}$, the function $t \mapsto \int \phi d\mu_t$ is not $\theta-$Holder at $t=0$. 
Moreover we can choose $f$ to satisfy Theorem II in \cite{Do1}, that is, $f$ can be a stably dynamically coherent, u-convergent, mostly contracting map on $\T^3$.
\end{thm}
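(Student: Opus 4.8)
The plan is to engineer a system where an unstable-dimension "center" degree of freedom carries a bifurcation whose branch emerges with a square-root (or $\theta$-type) singularity, and then to certify that the uGibbs state sees this singularity through a carefully chosen observable. The cleanest way to realize this is a skew-product construction over a hyperbolic base. Concretely, I would take $X = \T^3$ with a linear Anosov $A$ on $\T^2$ in the base and a one-dimensional center fiber $\T$, and build a family $F_t(x, z) = (A x, \, g_{t,x}(z))$ where $g_{t,x}: \T \to \T$ is a fiber map. The partial hyperbolicity and mostly contracting property come from arranging the fiber dynamics to be contracting on average: by the theory of \cite{BoVi, Do1} this gives a unique uGibbs state $\mu_t$ which is in fact an SRB measure, and one can check the u-convergence and stable dynamical coherence needed for Theorem II of \cite{Do1} since the strong-unstable foliation is the (smooth, jointly integrable up to the base) lift of the unstable foliation of $A$. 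The key is that $\mu_t$ projects to the SRB (= Haar) measure of $A$ on the base, and its disintegration along center fibers is given by a family of stationary measures $\{\nu_{t,x}\}$ for the random walk on $\T$ driven by the fiber maps along backward orbits.

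The heart of the matter is to force a non-$\theta$-Hölder dependence of $\nu_{t,x}$ — hence of $\int \phi \, d\mu_t = \int_{\T^2} \int_\T \phi(x,z) \, d\nu_{t,x}(z) \, dx$ — on the parameter $t$. I would do this by making the fiber maps $g_{t,x}$ have a \emph{shared attracting periodic orbit} in the center for $t \le 0$ (so $\nu_{t,x}$ is an atom, or a convex combination of atoms, at a fixed location, independent of $t \le 0$) and arranging that at $t = 0$ this periodic orbit undergoes a saddle-node-type bifurcation transverse to the parameter, so that for $t > 0$ the support of $\nu_{t,x}$ splits and moves away at speed $\sim \sqrt{t}$. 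Choosing $\phi$ to be any smooth function whose center-restriction separates the pre- and post-bifurcation locations (e.g. linear near the bifurcation point in the $z$ variable), the map $t \mapsto \int \phi\, d\mu_t$ is locally constant for $t \le 0$ and changes by $\sim \sqrt{t}$ for $t > 0$, which is not $\theta$-Hölder once $\theta > 1/2$. To reach \emph{every} $\theta \in (0,1)$, I would replace the square-root normal form by a degenerate bifurcation $z \mapsto z + a(t) + c\,z^{k}$ type (or tune the center map so the invariant graph emerges like $t^{1/k}$), and take $k$ large enough that $1/k < $ any prescribed $\theta$; the exponent of the Hölder modulus is then $1/k$, and non-$\theta$-Hölderness follows. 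The smoothness class $C^r$ for $r < \infty$ is no obstruction — the fiber family can be taken polynomial in $z$ and $t$ — and for $r = \infty$ one uses a $C^\infty$ fiber family with a flat (e.g. $e^{-1/t}$-type) or still-polynomial degeneracy, choosing the branch exponent appropriately.

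The main obstacle I anticipate is \emph{rigidly controlling the disintegration $\nu_{t,x}$}, i.e. showing that the stationary fiber measure genuinely concentrates near the attracting center object and genuinely follows the bifurcating branch, uniformly in $x$ and with the claimed $t$-dependence. For $t \le 0$ this requires that the fiber dynamics be a uniform contraction toward the periodic orbit along \emph{every} backward base orbit (so the stationary measure is exactly the atomic one, with no spurious spread); for $t > 0$ it requires a lower bound showing $\nu_{t,x}$ places definite mass at distance $\gtrsim t^{1/k}$ from the old location — an invariant-cone / graph-transform argument for the post-bifurcation attracting invariant graph, plus the observation that the stationary measure must give this graph full mass by uniqueness. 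A secondary technical point is verifying all the hypotheses of Theorem II of \cite{Do1} (mostly contracting, u-convergent, stably dynamically coherent) stably in a $C^2$ neighborhood: coherence and u-convergence are automatic for these skew-products because the center and unstable bundles integrate to the obvious product-type foliations that persist, and mostly contracting is an open condition on the fiberwise Lyapunov exponent, which is negative and varies continuously. Finally, to also cover the \emph{endomorphism} case one simply replaces the Anosov base by an expanding base endomorphism of $\T$ (or $\T^2$) and runs the identical fiber construction; the stationary-measure picture is unchanged.
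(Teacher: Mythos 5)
Your approach is genuinely different from the paper's, and it has a structural problem that I do not see how to repair within your framework.

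You propose to create the non-H\"older dependence by making the fiber dynamics carry an attracting invariant graph that undergoes a degenerate bifurcation at $t=0$, with the graph's position moving like $t^{1/k}$. But this scaling and the ``mostly contracting'' requirement of Theorem II in \cite{Do1} are in tension. If the invariant graph were uniformly normally attracting (fiberwise Lyapunov exponent $\leq -\epsilon < 0$) in a neighbourhood of $t=0$, then the graph transform is a uniform contraction with parameters, and one gets smooth (indeed $C^{r'}$ for appropriate $r'$) dependence of the graph, hence of $t \mapsto \int \phi\, d\mu_t$; no $t^{1/k}$ singularity could appear. To force the $t^{1/k}$ behaviour you therefore \emph{must} degenerate the normal contraction at $t=0$, e.g. a saddle-node $z \mapsto z + t + cz^k$, which makes the center Lyapunov exponent of the uGibbs state equal to zero at the bifurcation. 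That destroys the mostly-contracting property at $t=0$, which is exactly the parameter value at which the theorem asks $f=f_0$ to satisfy Theorem II of \cite{Do1}. It also makes uniqueness of the uGibbs state at $t=0$ problematic, and for $t>0$ near a saddle-node one loses the invariant graph entirely, so the ``definite mass at distance $\gtrsim t^{1/k}$'' claim would need an entirely separate argument about how the stationary measures redistribute, which you have not given.

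The paper uses a completely different mechanism that keeps the center exponent negative uniformly and yet produces arbitrarily small H\"older exponents. The fiber dynamics are projective $SL(2,\R)$ actions: on a set $\cC$ of base measure $>1-\kappa$ the fiber map is a fixed hyperbolic element $C_0 = \widehat{H_0}$, whose sink $u_0$ dominates, and on a set $\cB$ of small base measure the fiber map is a ``reset'' $B_0$ sending $u_0$ to the source $s_0$. The deformation $f_t$ is a rigid translation $A(z,x)+t$ in the fiber; it lifts to $M \times \R$, so one can track a displacement cocycle $S_n(t,z,x)$ that is monotone in $t$ (by the order-preservation of circle lifts) and equivariant under deck transformations. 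The key combinatorial estimate (Lemma \ref{PartII2lem Z}, Lemma \ref{PartII2lem increment}) is that on the event ``$L$ steps in $\cC$, one step in $\cB$, $L$ more steps in $\cC$'' the difference $S_N(\delta,\cdot)-S_N(-\delta,\cdot)$ increases by at least a full turn, because the two perturbed orbits approach opposite sides of the source and then wind around to opposite sides of the sink. The probability of this event is $\gtrsim (1-\kappa)^{2L}$ with $\delta = e^{-L\alpha/3}$, so the resulting lower bound on $\int\phi_\delta d\mu_\delta - \int\phi_{-\delta}d\mu_{-\delta}$ exceeds $\delta^\beta$ for any $\beta > -6\log(1-\kappa)/\alpha$; one obtains any $\theta$ by taking $\kappa$ close to $1$. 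Crucially, the center Lyapunov exponent stays uniformly negative because the $C_0$-dynamics is dominant; the loss of regularity comes from a rare-event/large-deviation effect in the rotation number, not from a loss of hyperbolicity, so there is no conflict with the mostly-contracting requirement.

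In short: the student's bifurcation mechanism cannot be reconciled with the ``Moreover'' clause of the theorem, and even setting aside mostly-contracting, the asserted $t^{1/k}$ behaviour of the stationary fiber measure has no proof. The paper's winding-number mechanism is the genuinely new idea you are missing.
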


The notion u-convergent in Theorem \ref{PartII2thm nondiff} is defined in \cite{Do1} for 3D partially hyperbolic systems $f$ as follows. We say $f$ is u-convergent if for any $\varepsilon > 0$, there exists an integer $n > 0$ such that for any two unstable manifolds of length between $1$ and $2$, denoted by $V_1, V_2$, there exists $x_j \in V_j, j=1,2$ such that $d(f^n(x_1), f^{n}(x_2)) < \varepsilon$.

Our Theorem \ref{PartII2thm nondiff} give an example to Dolgopyat's Question 13.3 in \cite{Do1}. An interesting aspect of our construction is that this nondifferentiability comes with some form of stability. See Further Aspect 2.

\subsection*{Further Aspect}

1. We will later see that we can choose $f$ in Theorem \ref{PartII2thm nondiff} so that $\inf_{f' \in \cU^{rot}_{\ell, r}}d_{C^0}(f, f')$ can be made arbitrarily small, and to exhibit lack of transversality. Theorem \ref{PartII2thm diff u-G}, \ref{PartII2thm nondiff} as stated does not exclude the possible existence of a region where the SRB measures are differentiable at a generic map, and are non-differentiable at the others ( on a nonempty set ). We think it is very likely that there exists a nonperturbative $C^r$ open neighbourhood of $\cU^{rot}_{\ell, r}$ with such property. Indeed, we think some form of transversality condition would be necessary for the differentiability of SRB measures. There are other works that explore the relation between transversality and ( fractional ) linear response, for example \cite{BaBeSc, BaKuLu, LiSm}

2. The non-differentiable example we constructed is a skew product, and is stable under sufficiently localised perturbation preserving the skew product ( See Corollary \ref{PartII2stablenondiff} ). It would be interesting to construct an open set of diffeomorphisms where the non-differentiability of SRB measures hold.

\subsection*{Plan of the paper} 
We will recall Tsujii's transversality condition in Section \ref{PartII2Transversality property}, and reduce the proof Theorem \ref{PartII2thm diff u-G} to Proposition \ref{PartII2prop diff}, which we prove in Section \ref{PartII2Spectral gap in Anisotropic Banach space}. In Section \ref{PartII2Nondifferentiability of u-Gibbs states}, we give precise conditions for the construction and verify these conditions in Subsection \ref{PartII2Conditions for the construction} and finish the proof of Theorem \ref{PartII2thm nondiff} in Subsection \ref{PartII2Proving nondifferentiability}.

\section{Transversality property}\label{PartII2Transversality property}

The proof of Theorem \ref{PartII2thm diff u-G} is divided into two parts using a transversality condition due to Tsujii in \cite{Ts2, Ts}, which we now introduce.
\begin{defi} \label{PartII2cone and transv}
For any $\alpha > 0$, we set
\aryst
 \CC(\alpha) = \{(x,y) \in \R^2 | |y| \leq \alpha |x| \}.
\earyst
More generally, for any line $L \subset \R^2$ containing the origin, any $\beta > 0$, we denote
\aryst
\CC(L, \beta) = \{(x,y) \in \R^2 \setminus \{0\} | \angle( (x,y), L) \leq \beta \} \bigcup \{0\}.
\earyst
Given $\ell \geq 2$, $\gamma_0 \in (\ell^{-1}, 1)$ and $\theta > 0$.
Denote $\CC_0 = \CC(\theta)$. Then for any $f \in \cU^{rot}_{\ell, r}$ written as
$f(x,y) = (\ell x, y + \varphi(x))$ such that 
\ary \label{PartII2gamma0theaf}
(\gamma_0 \ell - 1)\theta > \norm{D\varphi},
\eary
 we have that $\CC_0$ is strictly invariant under $Df$ in the sense that
\ary \label{PartII2cone strictly invariant}
Df_{z}(\CC_0) \Subset \CC(\gamma_0\theta) \mbox{ for any $z \in \T^2$}.
\eary
Here and after, for two cones $\CC, \CC' \subset \R^2$, we denote $\CC \Subset \CC'$ if the closure of $\CC$ is contained in the interior of $\CC'$ except for the origin. For any cone $\CC$, we set
\aryst
\CC^{*} = \{u \in \R^2 | \exists v \in \CC \mbox{ such that }\langle u,v \rangle = 0 \}.
\earyst

Given any $\ell \geq 2, \gamma_0 \in (\ell^{-1}, 1), \theta > 0, f$ satisfying \eqref{PartII2cone strictly invariant}, for any $z \in \T^2 $, any $n \geq 1$, any $w_1,w_2 \in f^{-n}(z)$, we say that $w_1 \pitchfork w_2$ if
\aryst
Df^{n}_{w_1}(\CC_0) \bigcap Df^{n}_{w_2}(\CC_0) = \{0\}
\earyst
otherwise we say $w_1 \not\pitchfork w_2$.
We define
\aryst
m(f, n) &=& \sup_{z \in \T^2}\sup_{w \in f^{-n}(z)} \ell^{-n} \#\{ \zeta \in f^{-n}(z) | \zeta \not\pitchfork w\}  \leq 1, \\
m(f) &=& \limsup_{n \to \infty} m(f, n)^{\frac{1}{n}} \leq 1.
\earyst
\end{defi}
By \eqref{PartII2cone strictly invariant}, it is direct to see that 
\ary \label{PartII2m is less that its finite scale approx}
m(f) \leq m(f, n)^{\frac{1}{n}}, \quad \forall n \geq 1.
\eary
Then we have the following easy but important consequence,
\begin{center} 
\textit{The function $f \mapsto m(f)$ is  upper semicontinuous in $C^1$ topology.}
\end{center}

Using the exponent $m(f)$, the proof of Theorem \ref{PartII2thm diff u-G} splits into two parts.
\begin{prop}\label{PartII2prop diff}
Given any integers $r  \geq 20$, $1 \leq r' \leq \frac{r}{2}-9$, $\ell \geq 2$. For any $\gamma_0 \in (\ell^{-1},1), \theta > 0$, $f \in \cal U^{rot}_{\ell, r}$ satisfying \eqref{PartII2gamma0theaf} and  $m(f) < 1$, there exists an $C^r$ open neighbourhood of $f$ in $C^{r}(\T^2, \T^2)$, denoted by $\cal U$, such that any $f' \in \cal U$ admits a unique SRB measure $\mu_{f'}$ having $C^{r'-1}$ density, and $f' \mapsto \mu_{f'}$ is $C^{r'}$ restricted to $\cal U$.
\end{prop}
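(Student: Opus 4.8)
The plan is to realize the SRB measure as the fixed point of a transfer operator acting on a suitable anisotropic Banach space $\mathcal B$ of distributions on $\T^2$, and then to get existence, uniqueness, spectral gap, regularity of the density, and $C^{r'}$ dependence all out of the spectral picture together with the Gouëzel--Liverani / Keller--Liverani perturbation theory. More precisely, for $f'$ in a $C^r$ neighbourhood $\mathcal U$ of $f$ I would consider the pushforward operator $\mathcal L_{f'}\psi = f'_*(\psi)$ (the transfer operator with respect to Lebesgue), which on $\T^2$ is naturally written as a sum over the $\ell$ inverse branches of the expanding base direction composed with the cocycle in the center direction. The first step is to build $\mathcal B$ à la Tsujii: a space of distributions which, roughly, have $r'-1$ derivatives transversally to the invariant cone field $\CC_0$ and controlled (negative-order, i.e. distributional) regularity along the cone. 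The key dynamical input making this work is precisely \eqref{PartII2cone strictly invariant} (strict invariance of $\CC_0$) together with $m(f)<1$: the latter is exactly Tsujii's transversality exponent, which controls how much mass of preimages fails to be "transverse" and hence guarantees that the part of the operator that could destroy regularity has spectral radius strictly less than $1$ on the relevant scale.

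With $\mathcal B$ in hand, the second step is the Lasota--Yorke / Doeblin--Fortet estimate: show that for $f'$ close to $f$, $\mathcal L_{f'}$ is a bounded operator on $\mathcal B$ and on an auxiliary weaker space $\mathcal B_w$, with $\|\mathcal L_{f'}^n u\|_{\mathcal B} \le C\lambda^n \|u\|_{\mathcal B} + C\|u\|_{\mathcal B_w}$ for some $\lambda<1$, the constants being uniform on $\mathcal U$. The transversality exponent $m(f)<1$ enters in bounding the "bad" cross terms coming from pairs of preimages $w_1 \not\pitchfork w_2$; here one uses \eqref{PartII2m is less that its finite scale approx}, i.e. that the infinite-scale exponent is dominated by any finite-scale approximation, to pass to a fixed power $f^N$ for which $m(f,N)^{1/N}$ is close to $m(f)$, and upper semicontinuity of $f\mapsto m(f)$ in $C^1$ to keep this uniform after perturbation — this is why $\mathcal U$ can be chosen open. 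The hypothesis $r \ge 20$ and $r' \le \frac r2 - 9$ is the bookkeeping margin: one loses roughly a factor of $2$ in smoothness when iterating the operator to gain the Lasota--Yorke estimate (a constant number of derivatives per composition of cone-adapted charts), and one needs several extra derivatives of headroom to make sense of the $r'$-fold differentiation in $t$ of the family. By quasicompactness, $\mathcal L_{f'}$ has $1$ as an eigenvalue; ergodicity of $f$ (which is $C^r$-stably ergodic in $\cU^{rot}_{\ell,r}$ — but one needs ergodicity of the genuinely $C^r$-nearby maps, which should follow from the spectral gap itself plus the standard argument that an SRB/physical measure is unique once the peripheral spectrum is a simple eigenvalue) gives simplicity of the peripheral spectrum, hence a spectral gap; the fixed distribution $\mu_{f'}$ is then a measure with density in $\mathcal B$, and the transversal-regularity built into $\mathcal B$ upgrades to honest $C^{r'-1}$ regularity of the density by a bootstrap/Sobolev-type embedding in the expanding directions (the center direction regularity comes for free since the map is, to leading order, a translation there, and the perturbed maps are $C^r$).

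The third step is differentiability of $f\mapsto \mu_f$. Given a $C^r$ family $\{f_t\}$ in $\mathcal U$, the map $t\mapsto \mathcal L_{f_t}\in L(\mathcal B,\mathcal B_w)$ is, by direct inspection of the inverse-branch formula and the chain rule, $C^{r'}$ (losing derivatives when passing from the strong to the weak space — this is the standard "$\mathcal L$ is smooth as an operator between different spaces" phenomenon of Gouëzel--Liverani, and is the reason one cannot avoid introducing $\mathcal B_w$). The Keller--Liverani perturbation theorem then gives that the spectral projector $\Pi_t$ onto the simple eigenvalue $1$ varies continuously, and the Gouëzel--Liverani refinement (differentiating the resolvent identity and using that $1$ is an isolated simple eigenvalue) gives that $t\mapsto \Pi_t$, hence $t\mapsto \mu_{f_t}$, is $C^{r'}$ as a map into $\mathcal B_w$; pairing against $\phi\in C^r(\T^2)$, which lies in the predual of $\mathcal B_w$ for $r$ large enough, yields that $t\mapsto \int\phi\,d\mu_{f_t}$ is $C^{r'}$ at $t=0$, which is the definition of $f\mapsto\mu_f$ being $C^{r'}$ restricted to $\mathcal U$.

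The main obstacle, and where the real work lies, is the first two steps carried out jointly: constructing the anisotropic space $\mathcal B$ adapted to Tsujii's cone and transversality condition so that (i) $\mathcal L_{f'}$ is bounded with a uniform Lasota--Yorke inequality on a whole $C^r$-neighbourhood, (ii) the essential spectral radius is governed by $\max(\gamma_0^{r'-1}, m(f))<1$ rather than by something $\ge 1$, and (iii) the space is still rich enough that its elements have $C^{r'-1}$ densities and that $\phi\mapsto\int\phi\,d(\cdot)$ and $f\mapsto\mathcal L_f$ have the claimed smoothness. Tracking the exact number of derivatives consumed at each composition — to verify that $r\ge 20,\ r'\le r/2-9$ suffices — is the tedious but essential part; the transversality bound $m(f)<1$, upper semicontinuity, and \eqref{PartII2m is less that its finite scale approx} are the conceptual ingredients that make the perturbative statement (open neighbourhood) possible at all.
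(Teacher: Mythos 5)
Your overall architecture — Tsujii-type anisotropic spaces, a Lasota--Yorke inequality with essential spectral radius controlled by the transversality exponent $m(f)$ and the hyperbolicity, then Keller--Liverani and Gou\"ezel--Liverani perturbation theory for the $C^{r'}$-dependence — is the same as the paper's, and your reading of how $m(f)<1$, upper semicontinuity, and \eqref{PartII2m is less that its finite scale approx} conspire to make the statement open is correct. But there are two places where your version departs from, and falls short of, the actual argument.

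First, you describe $\mathcal B$ as having ``negative-order, i.e.\ distributional regularity along the cone'', which is the usual Baladi--Tsujii setup ($q\le 0<p$), and then you propose to recover $C^{r'-1}$ regularity of the density a posteriori by ``a bootstrap/Sobolev-type embedding in the expanding directions''. The paper does something structurally different and cleaner: it takes \emph{both} exponents positive, $0<q<p$, in the anisotropic Sobolev scale $W_{\Theta,p,q}$, so that by Lemma~\ref{PartII2lem norm basics} one has $W_{\Theta,p,q}\subset H^q_0\subset C^{q-2}$ directly. With the filtration $\cB^k = W_{\Theta_k,\lfloor r/2\rfloor - r'-5+k,\,k}$ one gets $\cB^{r'+1}\subset C^{r'-1}$ for free, so the eigenfunction is an honest $C^{r'-1}$ function from the start and no bootstrap is needed. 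Using positive $q$ is precisely what the paper flags as the new ingredient compared to Tsujii; your proposal misses this and it is not clear how your bootstrap would be carried out, since the transfer operator does not obviously gain regularity along the center direction.

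Second, and more importantly, your argument for uniqueness of the SRB measure is a genuine gap. You invoke ``the standard argument that an SRB/physical measure is unique once the peripheral spectrum is a simple eigenvalue'', but a simple eigenvalue $1$ for $\PF_{f'}$ on a Banach space of distributions does not by itself say anything about Lebesgue-generic points or about uniqueness among SRB measures; one must connect the spectral fixed point to Birkhoff averages of Lebesgue-a.e.\ point. The paper does this concretely: because $\cB^2\subset C^0$, the eigendensity $\rho_{f'}=\Pi_{f'}1$ is continuous, and the Gou\"ezel--Liverani estimate gives $\|\Pi_{f'}1-\Pi_f 1\|_{\cB^2}\to 0$ with $\Pi_f1\equiv 1$ (by ergodicity of the unperturbed $f$), hence $\rho_{f'}\ge \tfrac12$ everywhere for $f'$ close to $f$. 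An invariant measure equivalent to Lebesgue with a spectral gap has all of $\T^2$ (mod null) as its basin, so it \emph{is} the unique SRB measure. This positivity-of-density step is the heart of the uniqueness proof and depends crucially on having chosen $q\ge 2$ so that the space sits inside $C^0$; it is exactly what your proposal leaves unaddressed, and exactly why your ``distributional along the cone'' choice of space would break the argument.
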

\begin{prop}\label{PartII2prop density}
For any integers $r \geq 1, \ell \geq 2$,  any $f \in \cU^{rot}_{\ell, r}$ that is $C^r-$stably ergodic in $\cU^{rot}_{\ell, r}$, there exist $\gamma_0 \in (\ell^{-1},1), \theta > 0$ satisfying \eqref{PartII2gamma0theaf} and $m(f) < 1$.
\end{prop}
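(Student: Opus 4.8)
The plan is to treat the two conclusions separately. The existence of $\gamma_0 \in (\ell^{-1},1)$ and $\theta > 0$ satisfying \eqref{PartII2gamma0theaf} is immediate: since $\ell \geq 2$, for any $\gamma_0 \in (\ell^{-1},1)$ one has $\gamma_0\ell - 1 > 0$, so it suffices to take $\theta$ larger than $\norm{D\varphi}/(\gamma_0\ell-1)$, and then the cone $\CC_0 = \CC(\theta)$ is strictly $Df$-invariant as in \eqref{PartII2cone strictly invariant} by the elementary computation recalled right after \eqref{PartII2gamma0theaf}. One may in fact fix $\gamma_0$ arbitrarily close to $1$ and let $\theta$ be large. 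The real content is the transversality bound $m(f) < 1$.

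First I would rewrite $m(f,n)$ in terms of the derivative cocycle. Writing $f^n(x,y) = (\ell^n x, y + \varphi_n(x))$ with $\varphi_n(x) = \sum_{j=0}^{n-1}\varphi(\ell^j x)$, one has $Df^n_w = \left(\begin{smallmatrix}\ell^n & 0\\ S_n(x_w) & 1\end{smallmatrix}\right)$, where $x_w$ is the first coordinate of $w$ and $S_n(x) = \varphi_n'(x) = \sum_{j=0}^{n-1}\ell^j\varphi'(\ell^j x)$; moreover $|\ell^{-n}S_n| < \norm{D\varphi}/(\ell-1) < \theta$ thanks to \eqref{PartII2gamma0theaf}, so each image cone $Df^n_w(\CC_0)$ is the nonvertical cone consisting of the directions of slope in the interval $[\ell^{-n}(S_n(x_w)-\theta),\,\ell^{-n}(S_n(x_w)+\theta)]$. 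Two such cones meet only at the origin iff these slope intervals are disjoint, i.e. $w_1 \not\pitchfork w_2$ holds exactly when $|S_n(x_{w_1}) - S_n(x_{w_2})| \le 2\theta$. Hence
\[
m(f,n) \;=\; \sup_{z \in \T^2}\ \sup_{w \in f^{-n}(z)}\ \ell^{-n}\,\#\bigl\{\zeta \in f^{-n}(z)\;:\;|S_n(x_\zeta) - S_n(x_w)| \le 2\theta\bigr\},
\]
so that $m(f) < 1$ says precisely that the values $S_n$ do not cluster along fibres: no fixed-length window ever captures an exponentially large proportion of the $\ell^n$ numbers $\{S_n(x_\zeta)\}_{\zeta\in f^{-n}(z)}$.

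To prove this I would argue by contradiction, invoking that $f$ is $C^r$-stably ergodic, hence infinitesimally non-integrable by Theorem 3.3 in \cite{Do3}. If $m(f) = 1$, pick $n_k \to \infty$, fibres $z_k \in \T^2$, and base points $w_k \in f^{-n_k}(z_k)$ along which the proportion above tends to $1$. Over each fibre form the normalised counting measure on the preimages decorated with their renormalised slopes $\ell^{-n_k}S_{n_k}$, push it into the compact space of probability measures on the bundle of tangent directions over $\T^2$, and extract a weak-$*$ limit (also taking a limit of the base points). Using the recursion $S_{n+1}(x) = \varphi'(x) + \ell S_n(\ell x)$, which records how slopes transform along preimages, the clustering forces this limit to be an $f$-invariant probability measure on the bundle of tangent directions that over almost every base point is a point mass, i.e. a nontrivial measurable invariant line field (in the simplest case, a continuous $S$ solving $\varphi'(x) = S(x) - \ell S(\ell x)$). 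Such an object is exactly the integrable behaviour precluded by infinitesimal non-integrability, giving the contradiction. This implication is, in essence, Tsujii's dichotomy between transversality and integrability in \cite{Ts, Ts2}, which one could also cite directly.

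The step I expect to be the main obstacle is precisely this limiting construction: one must keep the width-$\ell^{-n}$ slope windows from collapsing under the weak-$*$ extraction, verify that the limiting measure is genuinely $f$-invariant and nontrivial, and rule out escape to a degenerate (vertical) direction — this last point being where the strict cone invariance \eqref{PartII2cone strictly invariant} enters — and finally match the structure produced against Dolgopyat's precise definition of infinitesimal non-integrability. The rest, namely the cone estimate and the reformulation of $m(f,n)$, is routine.
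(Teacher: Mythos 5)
Your slope reformulation of $m(f,n)$ via the cocycle $S_n(x)=\sum_{j=0}^{n-1}\ell^j\varphi'(\ell^j x)$ is correct, as is the observation that the cone condition \eqref{PartII2gamma0theaf} is immediate, and the contradiction scheme ($m(f)=1$ implies an integrability relation, contradicting Theorem~3.3 of \cite{Do3}) is exactly the paper's. The gap is the weak-$*$ extraction you flag, and it is a real one and not merely an unwritten verification: even in the best case, a weak-$*$ limit of decorated counting measures yields a \emph{measurable} invariant line field defined almost everywhere with respect to some invariant measure (which you would still have to identify), whereas Dolgopyat's infinitesimal non-integrability is a statement about \emph{every} backward orbit from a given base point. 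Upgrading a.e.\ to everywhere is the missing idea. You also discard useful information: by \eqref{PartII2m is less that its finite scale approx}, $m(f)=1$ forces $m(f,n)=1$ for every $n$, so over suitable fibres the clustering proportion is exactly $1$, not merely tending to $1$ along a subsequence.

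The paper's mechanism sidesteps the measure-theoretic difficulties entirely. Since $m(f,n)=1$, over some fibre $z_n$ all $\ell^n$ preimage cones $Df^n_{w'}(\CC_0)$ have mutually intersecting slope intervals of width $2\theta\ell^{-n}$, hence all lie in $\CC(L_n,C\ell^{-n})$ for a single line $L_n\subset\CC_0$. One then considers the set $W\subset\T^2\times\mathbb P(\R^2)$ of pairs $(z',L')$ with $Df^n_{w'}(\CC_0)\subset\CC(L',C\ell^{-n})$ for all $n\ge 0$ and all $w'\in f^{-n}(z')$: this $W$ is closed, completely $f$-invariant, and contains a subsequential limit $(z,L)$; since the $\ell$-adic preimages of any point are dense in $\T$ and $W$ depends only on the first coordinate, $W$ projects onto all of $\T^2$. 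This produces an \emph{everywhere-defined} $\psi:\T\to\R$ with $\ell^{-1}(\psi(x)+\varphi'(x))=\psi(\ell x)$, and telescoping gives $\sum_{i\ge 1}\ell^{-i}\varphi'(y_i)=\psi(y_0)$ along every backward orbit $(y_i)$, which is the pointwise negation of infinitesimal complete non-integrability. The moral is that ``closed, invariant, with dense projection'' is what replaces your weak-$*$ limit and delivers the everywhere statement for free; if you want to keep your outline, swap the measure-theoretic extraction for this closed invariant set.
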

\begin{proof}
The proof is very similar to Theorem 1.4 in \cite{Ts}.
We denote $f(x,y) = (\ell x, y + \varphi(x)), \forall (x,y) \in \T^2$ and choose any $\gamma_0 \in (\ell^{-1}, 1), \theta > 0$ such that \eqref{PartII2gamma0theaf} is true.
If $m(f) = 1$, then for any $n \geq 1$, there exists $z_n \in \T^2$ such that for any $w,w' \in f^{-n}(z_n)$, $Df^{n}_{w}(\CC_0) \bigcap Df^{n}_{w'}(\CC_0) \neq \emptyset$. Thus there exists a line in $\R^2$, denoted by $L_n$ contained in $\CC_0$, such that $Df^{n}_{\omega}(\CC_0) \subset \CC(L_n, C\ell^{-n})$ for all $w \in f^{-n}(z_n)$ and some constant $C$ independent of $n$. After passing to a subsequence, we can assume that $z_n \to z$, $L_n \to L$. We let $W$ be the set of $(z',L') \in\T^2 \times \mathbb P(\R^2)$ such that for any $n \geq 0$, any $w' \in f^{-n}(z')$, $Df^{n}_{w'}(\CC_0) \subset \CC(L', C\ell^{-n})$. We easily verify that $W$ is closed and completely invariant. Moreover, $(z,L) \in W$. This shows that for any $z \in \T^2$ there exists $\Psi(z) \in \mathbb P(\R^2)$ such that $(z, \Psi(z)) \in W$. It is easy to see that the choice of $\Psi(z)$ is unique and depends only on the first coordinate of $z$. Let $\psi : \T \to \R$ be a function such that $\Psi(z) = [ \R (1,\psi(x)) ], \forall z = (x,y) \in \T^2$. Then we have
\aryst
\ell^{-1}(\psi(x) + \varphi'(x)) = \psi(\ell x), \quad \forall x \in \T.
\earyst
Then for any two sequences $(y_n)_{n \geq 0},(y'_n)_{n \geq 0}$ in $\T$ such that 
$\ell y_{n+1} = y_n, \ell y'_{n+1} = y'_n$ and $y_0 = y'_0$, we have
\aryst
\sum_{i \geq 1} l^{-i} \varphi'(y_i) = \sum_{i \geq 1} l^{-i} \varphi'(y'_i) 
\earyst
But this shows that $f$ does not satisfy the infinitesimal completely non-integrability condition in Section 3.2 \cite{Do3}. We then conclude the proof by Theorem 3.3 in \cite{Do3}.
\end{proof}

\begin{proof}[Proof of Theorem \ref{PartII2thm diff u-G}: ] 
Our theorem follows immediately by combining Proposition \ref{PartII2prop diff} and Proposition \ref{PartII2prop density}.
\end{proof}

 We will prove Proposition \ref{PartII2prop diff} in Section \ref{PartII2Spectral gap in Anisotropic Banach space}.

\section{Spectral gap in Anisotropic Banach space}\label{PartII2Spectral gap in Anisotropic Banach space}

Our strategy for proving Proposition \ref{PartII2prop diff} is the following. We construct Anisotropic Sobolev spaces $W_{\Theta, p, q}$ following Tsujii in \cite{Ts}. Different from \cite{Ts}, we consider positive $p,q$, which corresponds to smaller and smoother spaces. We will consider a filtration of such spaces, and establish Lasota-Yorke's inequalities for Perron-Frobenius operator $\PF$ acting on these spaces. These give us control of the essential spectrums of $\PF$. Such control is ultimately due to our hypothesis that transversality strongly dominates the possible contraction in the center space. We then use a general theorem of Gou\"ezel-Liverani in \cite{GoLi} to show the differentiability result.

 Throughout this section, we will need to study inequalities associated to $f^n$ for $f \in C^r(\T^2, \T^2)$ and for different $n$'s. We use $C$ to denote positive constants which are independent of $n$, and use $C_n$ to denote positive constants which may depend on $n$. Constants $C, C_n$ are uniform in a $C^r$ open neighbourhood of $f$, and may vary from line to line.
\subsection{Anisotropic Sobolev spaces}

In this section, we will collection some basic notions from \cite{Ts}.
Throughout this section, 
we denote $R = (-\frac{1}{4}, \frac{1}{4})^2$ and $Q = (-\frac{1}{3}, \frac{1}{3})^2$.

We say $\Theta$ is a polarisation if it  is a combination $\Theta = (\CC_{+}, \CC_{-}, \varphi_{+}, \varphi_{-})$ of closed cones $\CC_{\pm}$ in $\R^2$ and $C^{\infty}$ functions $\varphi_{\pm} : \S^1 \to [0,1]$ on the unit circle $\S^1 \subset \R^2$ satisfying $\CC_{+} \bigcap \CC_{-} = \{0\}$ and
\aryst
\varphi_{+} =  \begin{cases}1, \mbox{ if } \xi \in \S^1 \bigcap \CC_{+}, \\ 0, \mbox{ if }\xi \in \S^1 \bigcap \CC_{-} \end{cases} , \quad
 \varphi_{-} = 1 - \varphi_{+}
\earyst
For two polarisation $\Theta=(\CC_{+}, \CC_{-}, \varphi_{+}, \varphi_{-})$ and $\Theta'=(\CC'_{+}, \CC'_{-}, \varphi'_{+}, \varphi'_{-})$, we write $\Theta < \Theta'$ if $\R^2 \setminus \CC'_{+} \Subset \CC_{-}$.

For a $C^{\infty}$ function $\chi : \R \to [0,1]$ satisfying 
$
\chi(s) = \begin{cases} 1, \mbox{ for } s \leq 1 \\ 0, \mbox{ for } s \geq 2 \end{cases}.
$
For a polarisation $\Theta = (\CC_{+}, \CC_{-}, \varphi_{+}, \varphi_{-})$, an integer $n \geq 0$, and $\sigma \in \{+,-\}$, we define $C^{\infty}$ function $\psi_{\Theta, n, \sigma} : \R^2 \to [0,1]$ by 
\aryst
\psi_{\Theta, n, \sigma}(\zeta) = \begin{cases} \varphi_{\sigma}(\zeta/ |\zeta|)\cdot(\chi(2^{-n}|\zeta|) - \chi(2^{-n+1}|\zeta|) ), \quad n \geq 1 \\ \chi(|\zeta|)/2, \quad n = 0 \end{cases}
\earyst

For a function $u \in L^2(R)$, we denote the Fourier modes by
\aryst 
\cal F(u)(\zeta) = \int e^{-2\pi iy \cdot \zeta} u(y) dy, \quad \zeta \in \R^2
\earyst
and define
\aryst
u_{\Theta, n, \sigma}(x) = \psi_{\Theta, n,\sigma}(D)u(x) :=  \int e^{2\pi ix \cdot \zeta} \psi_{\Theta, n,\sigma}(\zeta) \cal F(u)(\zeta)  d\zeta
\earyst

For any open set $X \in \R^2$, any $r \in (0, \infty]$, we denote by $C^{r}_0(X)$ the set of compactly supported $C^r$ functions on $X$.
For any $p \in \R$, for any $u \in C^{\infty}_{0}(\R^2)$, we denote its Sobolev norm $\norm{u}_{H^{p}}$ by
\aryst
\norm{u}_{H^p} = (\int (|\zeta|^{2}+1)^p |\cal F(u)(\zeta)|^2 d\zeta)^{\frac{1}{2}}
\earyst
It is well-known that for $p \in \N$, 
\ary \label{PartII2sobolev norm equiv}
\norm{u}_{H^p}^2 \sim \sum_{j=0}^{p} \norm{D^ju}_{L^2}^2
\eary
For an open set $X \subset \R^2$, we denote by $H^p_0(X)$ the completion of $C^{\infty}_0(X)$ with respect to $\norm{\cdot}_{H^p}$.

For a polarisation $\Theta = (\CC_{+}, \CC_{-}, \varphi_{+}, \varphi_{-})$ and a real number $p$, we define the semi-norms $\norm{\cdot}^{+}_{\Theta,p}$ and $\norm{\cdot}^{-}_{\Theta,  q }$ on $C^{\infty}_0(R)$ by
\aryst
\norm{u}^{\sigma}_{\Theta,  c(\sigma) } = (\sum_{n \geq 0} 2^{2c(\sigma)n}\norm{u_{\Theta, n,\sigma}}_{L^2}^2)^{1/2},
\earyst
where we set $c(+) = p$ and  $c(-) = q$.

We define the anisotropic Sobolev norm $\norm{\cdot}_{\Theta, p, q}$ on $C^{\infty}_0(R)$ for real numbers $p$ and $q$ by 
\aryst
\norm{u}_{\Theta, p,q} = ((\norm{u}^{+}_{\Theta, p})^2 + (\norm{u}^{-}_{\Theta,q})^2)^{1/2}
\earyst
For any $p,q \in \R$, any polarisation $\Theta$, we denote by $W_{\Theta, p,q}(R)$ the completion of $C^{\infty}_0(R)$ with respect to the norm $\norm{\cdot}_{\Theta, p,q}$. 

In the following two lemmata, we collect some basic properties of anisotropic Sobolev norms. 
\begin{lemma}\label{PartII2lem norm basics}
For any $0 \leq p' < p, 0 \leq q' < q$ satisfying $p'  \geq  q', p  \geq  q$, any polarisations $\Theta' < \Theta$, we have
\enmt
\item[$(1)$] $C^{p}_0(R) \subset H^{p}_0(R) \subset W_{\Theta, p,q}(R) \subset H^q_0(R)$. If $q \geq 2$, then $W_{\Theta, p,q}(R) \subset C^{q-2}(\overline{R})$,
\item[$(2)$] $W_{\Theta, p,q}(R) \subset W_{\Theta', p,q}(R)$,
\item[$(3)$] We have a compact inclusion $W_{\Theta, p,q}(R) \subset W_{\Theta, p',q'}(R)$.
\eenmt
\end{lemma}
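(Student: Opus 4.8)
The plan is to treat all three items as standard facts about (anisotropic) Sobolev spaces whose proofs run entirely on Littlewood--Paley analysis, using repeatedly two elementary observations: (a) the radial annuli $\mathrm{supp}\big(\chi(2^{-n}|\cdot|)-\chi(2^{-n+1}|\cdot|)\big)$ have uniformly bounded overlap and, together with the $n=0$ piece, the functions $\psi_{\Theta,n,+}+\psi_{\Theta,n,-}$ sum to $1$; and (b) any symbol $m(\zeta)=\phi(\zeta/|\zeta|)$ with $\phi:\S^1\to[0,1]$ gives a Fourier multiplier with $\norm{m(D)}_{L^2\to L^2}\le 1$. I would establish every norm comparison first on the dense subspace $C^\infty_0(R)$ and only then pass to completions; injectivity of the resulting maps is automatic because, since $p,q,p',q'\ge 0$, one has $\norm{\cdot}_{L^2}\lesssim\norm{\cdot}_{\Theta,p,q}$, so all spaces in play embed continuously in $L^2$ and, in particular, consist of functions supported in $\overline R$.

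For (1), the bounded overlap in (a) gives $\norm{u}^{+}_{\Theta,p}\lesssim\norm{u}_{H^p}$ and $\norm{u}^{-}_{\Theta,q}\lesssim\norm{u}_{H^q}\le\norm{u}_{H^p}$, the last inequality using $q\le p$; this yields $C^p_0(R)\subset H^p_0(R)\subset W_{\Theta,p,q}(R)$, the first inclusion being mollification. Conversely $\psi_{\Theta,n,+}(D)u+\psi_{\Theta,n,-}(D)u$ is exactly the $n$-th Littlewood--Paley block of $u$, so the Littlewood--Paley square-function description of $H^q$ combined with $q\le p$ gives $\norm{u}_{H^q}\lesssim\norm{u}_{\Theta,p,q}$, i.e. $W_{\Theta,p,q}(R)\subset H^q_0(R)$; the last assertion then reads $W_{\Theta,p,q}(R)\subset H^q_0(R)\subset C^{q-2}(\overline R)$ by the Sobolev embedding theorem on $\R^2$.

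For (2), the point is purely algebraic. Unwinding the definition, $\Theta'<\Theta$ forces $\varphi_+\equiv 1$ on $\{\varphi'_+>0\}$, hence $\varphi_+\varphi'_+=\varphi'_+$ and $\varphi_+-\varphi'_+=\varphi_+\varphi'_-$. Substituting these identities into the formula for $\psi_{\Theta',n,\sigma}$ expresses the $\Theta'$-blocks in terms of the $\Theta$-blocks, namely $u_{\Theta',n,+}=\varphi'_+(D/|D|)\,u_{\Theta,n,+}$ and $u_{\Theta',n,-}=u_{\Theta,n,-}+\varphi'_-(D/|D|)\,u_{\Theta,n,+}$. Taking $L^2$ norms with (b), weighting by $2^{pn}$ and $2^{qn}$ respectively and summing in $n$, one finds $\norm{u}^{+}_{\Theta',p}\le\norm{u}^{+}_{\Theta,p}$ and $\norm{u}^{-}_{\Theta',q}\lesssim\norm{u}^{-}_{\Theta,q}+\big(\sum_n 2^{2qn}\norm{u_{\Theta,n,+}}_{L^2}^2\big)^{1/2}\le\norm{u}^{-}_{\Theta,q}+\norm{u}^{+}_{\Theta,p}$, where the last step again uses $q\le p$; hence $\norm{u}_{\Theta',p,q}\lesssim\norm{u}_{\Theta,p,q}$, which is (2) after completion.

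For (3), I would decompose $u=S_Nu+T_Nu$ with $S_N=\chi(2^{-N}|D|)$. The tail satisfies $\norm{T_Nu}_{\Theta,p',q'}\le\big(2^{-(p-p')N}+2^{-(q-q')N}\big)\norm{u}_{\Theta,p,q}$, which tends to $0$ uniformly on bounded subsets of $W_{\Theta,p,q}(R)$ because $p'<p$ and $q'<q$. For the low-frequency part, if $\{u_k\}$ is bounded in $W_{\Theta,p,q}(R)$ then $S_Nu_k=K_N*u_k$ with $K_N$ the (Schwartz) kernel of $\chi(2^{-N}|D|)$, so, since the $u_k$ are supported in $\overline R$ with uniformly bounded $L^1$ norm, the family $\{S_Nu_k\}_k$ is band-limited, bounded in every $C^m(\R^2)$, and uniformly rapidly decaying in space; by Arzel\`a--Ascoli together with this tightness it is precompact in $H^m(\R^2)$ for every $m$, hence precompact for $\norm{\cdot}_{\Theta,p',q'}$ (as $\norm{\cdot}_{\Theta,p',q'}\lesssim\norm{\cdot}_{H^{p'}}$ when $p'\ge q'$). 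A diagonal extraction over $N$ then produces a subsequence of $\{u_k\}$ that is Cauchy for $\norm{\cdot}_{\Theta,p',q'}$, and its limit lies in the complete space $W_{\Theta,p',q'}(R)$, so the inclusion is compact. I expect the only genuinely delicate point to be this last step of (3): the band-limited approximants $S_Nu_k$ are not compactly supported, so a naive Rellich argument on the bounded domain $R$ does not apply directly, and one must instead route the compactness through $H^m(\R^2)$ using the Schwartz decay of $K_N$ and the support of the $u_k$, recovering membership in $W_{\Theta,p',q'}(R)$ from completeness; the remaining steps are routine Littlewood--Paley bookkeeping.
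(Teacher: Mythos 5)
Your argument is correct and in substance coincides with what the paper is implicitly relying on: the paper declares (1) and (2) ``obvious'' and for the compactness in (3) simply cites Proposition 5.1 of Baladi--Tsujii, and what you have written out --- the annulus/overlap bookkeeping for (1), the cone-ordering multiplier identity $\varphi'_+=\varphi_+\varphi'_+$ (hence $u_{\Theta',n,+}=\varphi'_+(D/|D|)u_{\Theta,n,+}$ and $u_{\Theta',n,-}=u_{\Theta,n,-}+\varphi'_-(D/|D|)u_{\Theta,n,+}$) for (2), and the low/high frequency split $u=S_Nu+T_Nu$ with a Rellich-type argument for $S_Nu_k$ and the weight-decay bound on $T_Nu$ for (3) --- is exactly the standard Littlewood--Paley route that underlies both the ``obvious'' claim and the Baladi--Tsujii proposition. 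The only thing you add is a self-contained proof of the compactness in (3) where the paper delegates to a reference, and you correctly flag the one genuine subtlety (the band-limited pieces $S_Nu_k$ are not supported in $R$, so one must obtain the limit in $W_{\Theta,p',q'}(R)$ via completeness rather than via a naive Rellich theorem on the bounded domain).
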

\begin{proof}
The first 3 inclusions in (1) and (2) are obvious. The inclusion $W_{\Theta, p,q}(R) \subset C^{q-2}(\overline{R})$ for $q \geq 2$ follows from $W_{\Theta, p,q}(R) \subset H^q_0(R)$ and Sobolev's embedding theorem. For (3), we refer the reader to Proposition 5.1 in \cite{BaTs}.
 \end{proof}

\begin{lemma}\label{PartII2lem norm unit partition}
Let $r \geq 1$ and let $g_i : \R^2 \to [0,1], 1 \leq i \leq I$, be a family of functions, $C^r$ in the interior of $R$, and satisfy $\sum_{i=1}^{I} g_i(x) \leq 1$ for $x \in R$. Let $\Theta$ and $\Theta'$ be polarisations such that $\Theta' < \Theta$, and let $1 \leq q \leq p \leq r$ be integers. Then  for all $u \in C_0^{r}(R)$  we have
\aryst 
(\sum_{i=1}^{I} \norm{g_i u}_{\Theta',p,q}^2)^{\frac{1}{2}} \leq C \norm{u}_{\Theta, p, q} + C' \norm{u}_{\Theta, p-1, q-1}
\earyst
where $C$ does not depend on  $\{g_i\}$, while $C'$ may. 
Further, if $\sum_{i=1}^{I} g_i(x) \equiv 1$ for all $x \in R$ in addition, then for all $u \in C_0^{r}(R)$ we have
\aryst
\norm{u}_{\Theta',p,q} \leq \nu (\sum_{i=1}^{I} \norm{g_i u}_{\Theta,p,q}^2)^{\frac{1}{2}} + C'\sum_{i=1}^{I} \norm{g_i u}_{\Theta, p-1, q-1}
\earyst
where $\nu$ is the intersection multiplicity of the supports of the functions $g_i$ for $1 \leq i \leq I$.
\end{lemma}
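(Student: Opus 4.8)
The statement to prove is Lemma~\ref{PartII2lem norm unit partition}, the two multiplicative-type estimates for anisotropic Sobolev norms under multiplication by a partition (or subpartition) of unity.

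\textbf{Plan of proof.} The overall strategy is to reduce both inequalities to the ``almost orthogonality'' of the Littlewood--Paley-type blocks $\psi_{\Theta,n,\sigma}(D)$ together with standard commutator/pseudodifferential estimates for multiplication operators. First I would record the following mechanism: for a fixed $\sigma$, the operators $M_{g_i}$ of multiplication by $g_i$ are, modulo the cutoff to $R$, order-zero operators whose symbol is $x$-dependent but $\zeta$-independent; hence $\psi_{\Theta',m,\sigma}(D)\, g_i\, \psi_{\Theta,n,\sigma}(D)$ is $O(2^{-N|m-n|})$ in $L^2\to L^2$ operator norm for every $N$ whenever the cones in $\Theta'<\Theta$ are nested with a definite gap (this is exactly where $\Theta'<\Theta$, i.e. $\R^2\setminus\CC'_+\Subset\CC_-$, is used: the frequency support of $\psi_{\Theta',m,+}$ is cut off to $\CC'_+$, on which $\varphi_-$ vanishes, so when $u$ is hit by the $-$ cutoff at frequency $\sim 2^m$ after multiplication by $g_i$, one gains rapid decay). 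Then the $+$ part of $\|g_i u\|_{\Theta',p,q}$ is estimated by $\|g_i u\|^+_{\Theta',p}$, which after inserting $\mathrm{Id}=\sum_n \psi_{\Theta,n,+}(D)$ and using the rapid off-diagonal decay plus Schur's test becomes $\lesssim \sum_n 2^{2pn}\|g_i\, u_{\Theta,n,+}\|_{L^2}^2$ up to a lower-order term controlled by $\|u\|_{\Theta,p-1,q-1}$; the constant here is universal because $\|g_i\|_\infty\le 1$ (the $C^r$ norms of the $g_i$ enter only the lower-order term, hence the $C'$). Summing in $i$ and using $\sum_i g_i^2\le\sum_i g_i\le 1$ pointwise then gives the first inequality.

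\textbf{Second inequality.} For the reverse estimate, when $\sum_i g_i\equiv 1$, I would write $u_{\Theta',n,\sigma}=\sum_i \psi_{\Theta',n,\sigma}(D)(g_i u)$ and expand $\|u_{\Theta',n,\sigma}\|_{L^2}^2=\sum_{i,j}\langle \psi_{\Theta',n,\sigma}(D)(g_i u),\psi_{\Theta',n,\sigma}(D)(g_j u)\rangle$. The pairs with $\mathrm{supp}\,g_i\cap\mathrm{supp}\,g_j=\emptyset$ contribute only a lower-order (rapidly decaying in $n$) term because then $g_i \psi_{\Theta',n,\sigma}(D)^* \psi_{\Theta',n,\sigma}(D) g_j$ is smoothing; the remaining ``overlapping'' pairs number at most $\nu$ per index $i$, so Cauchy--Schwarz gives $\|u_{\Theta',n,\sigma}\|_{L^2}^2\le \nu\sum_i\|\psi_{\Theta',n,\sigma}(D)(g_iu)\|_{L^2}^2 + (\text{l.o.t.})$. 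Multiplying by $2^{2c(\sigma)n}$, summing over $n$ and $\sigma$, and using again that $\psi_{\Theta',n,\sigma}(D)$ composed with the $\Theta$-blocks is almost diagonal (to replace $\psi_{\Theta',n,\sigma}(D)(g_i u)$ by the blocks of $g_i u$ in the $\Theta$-decomposition, at the cost of the lower-order term), yields $\|u\|_{\Theta',p,q}\le \nu(\sum_i\|g_iu\|_{\Theta,p,q}^2)^{1/2}+C'\sum_i\|g_iu\|_{\Theta,p-1,q-1}$. The passage from the $\Theta'$-blocks back to the $\Theta$-blocks — i.e. handling the ``off-cone'' frequency pieces — is precisely where $\Theta'<\Theta$ and the gain of one derivative enter.

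\textbf{Main obstacle.} The technical heart, and what I expect to be the hardest part to make rigorous, is the quantitative almost-orthogonality estimate $\|\psi_{\Theta',m,\sigma}(D)\, g\, \psi_{\Theta,n,\tau}(D)\|_{L^2\to L^2}\le C_N 2^{-N|m-n|}$ together with the corresponding bound for the ``wrong cone'' terms ($\sigma\ne\tau$ or frequencies falling outside $\CC_\sigma$), which must be uniform in $g$ ranging over functions with $\|g\|_\infty\le1$ for the top-order constant and allowed to depend on $\|g\|_{C^r}$ only for the $2^{-n}$-gaining remainder; this is a nonstationary-phase / integration-by-parts computation on the kernel $\int e^{2\pi i(x-y)\cdot\zeta}\psi_{\Theta',m,\sigma}(\zeta)\,\widehat{g\,\psi_{\Theta,n,\tau}(D)u}$ type expression, exploiting that on the overlap of the frequency shells the only way to move mass between cones costs a factor from $|\nabla_\zeta|$ of the cutoffs, which is $\sim 2^{-n}$. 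Since this is entirely parallel to the corresponding estimates in Tsujii's papers \cite{Ts,Ts2} and Baladi--Tsujii \cite{BaTs} (our $\psi_{\Theta,n,\sigma}$ and the order-zero nature of $M_g$ are identical there), I would carry out the key commutator bound in detail and then quote the almost-orthogonality bookkeeping from those references, noting that our $p,q\ge 1$ (rather than possibly negative exponents) only makes the lower-order terms easier to absorb.
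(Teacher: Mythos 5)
Your proposal is correct and follows essentially the same route as the paper: the paper's own proof simply cites Tsujii's Lemma~2.3 and Appendix~C for the first inequality and Baladi--Tsujii Lemma~7.1 for the second, with the only noted modification being the Leibniz-type estimate $\|g_iu\|_{H^q}\le\|g_iD^qu\|_{L^2}+C(g_i)\|u\|_{H^{q-1}}$, which is precisely your observation that the top-order constant uses only $\|g_i\|_\infty\le1$ while the $C^r$ norms of the $g_i$ contribute only to the lower-order term $C'$. The almost-orthogonality and Schur-test machinery you unpack is what those cited references contain, so this is the same argument made explicit (one small imprecision: for $g_i\in C^r$ the off-diagonal decay $O(2^{-N|m-n|})$ holds only for $N\lesssim r$, not for every $N$, but this is enough since $p,q\le r$).
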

\begin{proof}
This is a more general case of Lemma 2.3 in \cite{Ts}. The proof follows from straightforward adaptions. The first inequality is essentially proved in Appendix C \cite{Ts}, the only difference being that instead of $\norm{g_i u}_{L^{2}} \leq \norm{u}_{L^2}$, we use 
\aryst 
\norm{g_iu}_{H^q} \leq \norm{g_iD^{q}u}_{L^2} + C(g_i) \norm{u}_{H^{q-1}}
\earyst
The second inequality is essentially proved in Lemma 7.1 \cite{BaTs}.
\end{proof}

To exploit the expansion in the unstable direction, we consider the following situation. Let $r \geq 2, \rho \in C^{r-1}_0(R)$ be supported inside an open set $U \subset R $ and let $S: U \to S(U) \subset R$ be a $C^r$ diffeomorphism. Consider operator
$L : C^{r-1}(R) \to C^{r-1}(R)$ defined by
\aryst
Lu(x) = \begin{cases} \rho(x) u( S(x)), \quad \forall x \in U \\ 0, \quad \mbox{otherwise} \end{cases}
\earyst
Assume that for polarisations $\Theta = (\CC_{\pm}, \varphi_{\pm}), \Theta' = (\CC'_{\pm}, \varphi'_{\pm})$, we have
\aryst
(DS_{\zeta})^{tr}(\R^2 \setminus \CC_{+}) \Subset \CC'_{-}, \quad \forall \zeta \in U
\earyst
where $(DS_{\zeta})^{tr}$ denotes the transpose of $DS_{\zeta}$. Put
\aryst
 \gamma(S) &=& \min_{\zeta \in U}|\det DS_{\zeta}|\\
\Lambda(S, \Theta') &=& \sup \{\frac{\norm{(DS_{\zeta})^{tr}(v)}}{\norm{v}}| \zeta \in U, (DS_{\zeta})^{tr}(v) \notin \CC'_{-}\}
\earyst
The following is essentially contained in the proof of Lemma 2.4 in \cite{Ts}. 
We refer the readers to the Appendix for the details

\begin{lemma}\label{PartII2lemma ly-ineq local}  Given integers $r \geq 7, 0 \leq q \leq p < \frac{r}{2}-3$. Then the operator $L$ extends boundedly to $L:  W_{\Theta, p,q}(R) \to W_{\Theta', p,q}(R)$. If in addition $q \geq 1$, then we have for $u \in W_{\Theta, p,q}(R)$ that 
\aryst
\norm{Lu}^{-}_{\Theta', q} &\leq& C\norm{\rho}_{L^{\infty}}\gamma(S)^{-\frac{1}{2}}\norm{DS}^q  \norm{u}_{\Theta, p,q} + C' \norm{u}_{\Theta, p-1, q-1} \\
\norm{Lu}^{+}_{\Theta', p} &\leq& C\norm{\rho}_{\infty} \gamma(S)^{-\frac{1}{2}} \Lambda(S, \Theta')^p \norm{u}_{\Theta, p,q} + C' \norm{u}_{\Theta, p-1, q-1} 
\earyst
here constant $C$ does not dependent on $\Theta, \Theta', S, \rho$ while $C'$ may.
\end{lemma}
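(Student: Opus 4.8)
The plan is to follow the proof of Lemma~2.4 in \cite{Ts}, adapting it to positive exponents $p,q$ and keeping the geometric constants explicit. Write $Lu=\rho\cdot(u\circ S)$. The basic ingredient is the $L^2$ estimate from the change of variables $y=S(x)$:
\[
\norm{Lu}_{L^2}^2=\int_{S(U)}|\rho(S^{-1}y)|^2\,|u(y)|^2\,|\det DS^{-1}_y|\,dy\ \le\ \norm{\rho}_{L^\infty}^2\,\gamma(S)^{-1}\,\norm{u}_{L^2}^2,
\]
so $\norm{L}_{L^2\to L^2}\le\norm{\rho}_{L^\infty}\gamma(S)^{-1/2}$, which is the source of the factor $\gamma(S)^{-1/2}$ in both displayed inequalities. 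Since $\norm{Lu}^{\sigma'}_{\Theta',c(\sigma')}$ is the $\ell^2$-norm of $\big(2^{c(\sigma')m}\norm{(Lu)_{\Theta',m,\sigma'}}_{L^2}\big)_{m\ge0}$, and $(Lu)_{\Theta',m,\sigma'}=\sum_{n\ge0,\ \sigma}\psi_{\Theta',m,\sigma'}(D)\,L\,\psi_{\Theta,n,\sigma}(D)u$, everything reduces to controlling the frequency-localised blocks $T^{\sigma',\sigma}_{m,n}:=\psi_{\Theta',m,\sigma'}(D)\,L\,\psi_{\Theta,n,\sigma}(D)$ on $L^2(R)$.

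The heart is an almost-orthogonality (Schur / Cotlar--Stein type) estimate for these blocks, obtained by integration by parts in the oscillatory integral representing their kernels. Because $u\circ S$ locally oscillates, to leading order, with frequency $(DS_x)^{tr}\zeta$ whenever $u$ oscillates with frequency $\zeta$, non-stationary phase shows $\norm{T^{\sigma',\sigma}_{m,n}}_{L^2\to L^2}$ decays faster than any fixed power of $2^{\max(m,n)}$ unless $2^m$ and $2^n$ are comparable up to a factor controlled by $\norm{DS},\norm{DS^{-1}}$; moreover, by the standing hypothesis $(DS_\zeta)^{tr}(\R^2\setminus\CC_+)\Subset\CC'_-$, a packet with frequency in $\mathrm{supp}\,\varphi_-\subset\R^2\setminus\CC_+$ is carried by $L$ into frequencies inside $\CC'_-$, so $T^{+,-}_{m,n}$ is also negligible: the $+$ part of $Lu$ only sees the $+$ part of $u$. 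For the surviving $+$-blocks only directions $v$ with $(DS)^{tr}v\notin\CC'_-$ matter, and for these the frequency scale grows by at most $\Lambda(S,\Theta')$, so $T^{+,+}_{m,n}$ lives on $2^m\le C\,\Lambda(S,\Theta')\,2^n$, whereas for the $-$-output one has only the cruder $2^m\le C\,\norm{DS}\,2^n$; on each surviving band $\norm{T^{\sigma',\sigma}_{m,n}}_{L^2\to L^2}\le C\norm{\rho}_{L^\infty}\gamma(S)^{-1/2}$ up to a rapidly summable error. Assembling: for the minus norm both $\sigma$ contribute and, using $q\le c(\sigma)$ (i.e. $q\le p$) together with $2^m\le C\norm{DS}2^n$,
\[
2^{qm}\norm{T^{-,\sigma}_{m,n}u}_{L^2}\ \le\ C\norm{\rho}_{L^\infty}\gamma(S)^{-1/2}\norm{DS}^q\,2^{c(\sigma)n}\norm{\psi_{\Theta,n,\sigma}(D)u}_{L^2}
\]
up to a rapidly summable remainder; taking $\ell^2$-norms over the bounded-width diagonal band in $(m,n)$ gives the first inequality. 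For the plus norm only $\sigma=+$ survives and the sharper scale bound gives $2^{pm}\norm{T^{+,+}_{m,n}u}_{L^2}\le C\norm{\rho}_{\infty}\gamma(S)^{-1/2}\Lambda(S,\Theta')^p\,2^{pn}\norm{\psi_{\Theta,n,+}(D)u}_{L^2}$ plus remainder, whence the second. The remainder terms — from commutators of $\rho,S$ with the multipliers, from off-diagonal blocks killed by non-stationary phase, and from the mismatch of the $\Theta$- and $\Theta'$-cutoffs — each cost exactly one dyadic scale, hence are bounded by $C'\norm{u}_{\Theta,p-1,q-1}$ with $C'$ allowed to depend on $\rho,S,\Theta,\Theta'$; the unconditional boundedness $L:W_{\Theta,p,q}(R)\to W_{\Theta',p,q}(R)$ follows from the same analysis (the hypothesis $q\ge1$ enters only because the refined estimates use the $H^q$-commutator bound, which needs $q-1\ge0$).

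The main obstacle, and the reason for the constraints $r\ge7$ and $p<\frac{r}{2}-3$, is the quantitative control of the off-diagonal and remainder terms: each integration by parts consumes derivatives of the phase $S$, only $C^r$, and of the amplitude $\rho$, only $C^{r-1}$, and to reach the top weight $p$ one must differentiate of the order of $2p$ times — roughly $p$ for the order of the multipliers and $p$ more for phase and amplitude — plus a bounded number dictated by the dimension, which is exactly what $p<r/2-3$ buys. One must also organise the computation so that the constant $C$ multiplying the leading terms depends only on the dimension, through the Littlewood--Paley and almost-orthogonality constants, and not on $\Theta,\Theta',S,\rho$; this uniformity, which uses the fixed gap $\Theta'<\Theta$, is what makes the lemma usable when iterated over $f^n$ in the proof of Proposition~\ref{PartII2prop diff}. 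I would carry out the detailed oscillatory-integral bookkeeping exactly as in Appendix~C of \cite{Ts} and Section~7 of \cite{BaTs}, the only substantive change being the systematic replacement of $L^2$-bounds such as $\norm{g_i u}_{L^2}\le\norm{u}_{L^2}$ by their $H^q$-versions, in the spirit of Lemma~\ref{PartII2lem norm unit partition}.
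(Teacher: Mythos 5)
Your proposal identifies the right shape of argument for the ``$+$'' estimate, but it diverges from the paper's proof in the ``$-$'' estimate, and this divergence is exactly where the sharp constant $\norm{DS}^q$ becomes tricky.

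For the ``$-$'' estimate the paper does \emph{not} decompose $Lu$ into Littlewood--Paley blocks. It simply bounds $\norm{Lu}^{-}_{\Theta',q}\le\norm{Lu}_{H^q}$, applies the Leibniz/chain rule to $\partial^\alpha(\rho\cdot(u\circ S))$ for $|\alpha|=q$, and uses the $L^2$ change of variables. The top-order term is $\rho\sum_\beta\prod_j\partial_{\alpha_j}S_{\beta_j}\,(\partial^\beta u)\circ S$, whose $L^2$ norm is bounded by $\norm{\rho}_{L^\infty}\norm{DS}^q\gamma(S)^{-1/2}\norm{u}_{H^q}$ on the nose; the lower-order terms land in $C'\norm{u}_{\Theta,p-1,q-1}$. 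This avoids the frequency-block bookkeeping entirely, and, crucially, the constant $\norm{DS}^q$ drops out of the chain rule, not from counting which $(m,n)$-blocks survive. Your frequency-band version is not wrong in spirit, but the surviving band for $(\cdot,-)$ output has width controlled by $\max(\norm{DS},\norm{DS^{-1}})$ (the paper's $\mu$), and recovering $\norm{DS}^q$ cleanly — rather than, say, an extra factor depending on $\norm{DS^{-1}}$ or on the band width — requires an argument you don't supply. The paper's direct computation sidesteps the issue.

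For the ``$+$'' estimate your route and the paper's coincide in substance: only $\tau=+$ survives, the output scale is bounded by $\Lambda(S,\Theta')$ times the input scale, and a Cauchy--Schwarz with weight $2^{-2mp}$ followed by Parseval gives the $\Lambda(S,\Theta')^p\gamma(S)^{-1/2}\norm{\rho}_{L^\infty}$ factor. One detail worth correcting: the off-diagonal / non-adjacent blocks do not ``cost exactly one dyadic scale''. What the paper uses (Lemma~B.1 of \cite{Ts}) is the bound $\norm{v^{m,\tau}_{n,\sigma}}_{L^2}\le C\,2^{-(r-1)\max(m,n)}\norm{u_{\Theta,m,\tau}}_{L^2}$ whenever $(m,\tau)\not\hookrightarrow(n,\sigma)$; the constraint $p,q<\frac r2-3$ is what makes the weighted double sum $\sum_{n}\sum_{m}2^{2c(\sigma)n-2c'(\tau)m-2(r-1)\max(m,n)}$ converge and land in $C'\norm{u}_{\Theta,p-1,q-1}$. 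Your heuristic ``$2p$ derivatives'' accounting is in the right ballpark but conflates the two separate roles of $r$ (polynomial decay rate in Lemma~B.1) and $p$ (the weight). Finally, you attribute the restriction $q\ge1$ to an $H^q$-commutator bound; more directly, $q\ge1$ is simply what makes $\norm{\cdot}_{\Theta,p-1,q-1}$ a legitimate (non-negative-index) norm for the remainder.
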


For any $p,q \in \R$, any polarisation $\Theta$, we define a norm $\norm{\cdot}_{\Theta,p,q}$ for $C^{\infty}(\T^2)$ in the following way. We construct a finite collection of translations of $R$ in $\T^2$, defined by $\{R_{a} := \kappa_a(R) \}_{\alpha \in A}$, where $A$ is a finite set in $\T^2$ and $\kappa_a : Q \to \T^2$ is the embedding defined by $\kappa_a (z) = z + a, \forall z \in Q$. Let $R_a = \kappa_a(R)$ and $Q_a = \kappa_a(Q)$. We assume that $\T^2 \subset \bigcup_{a \in A}R_a$. We choose a unit partition $\{\rho_a \in C^{\infty}(\T^2, [0,1]) \}_{a \in A}$ such that
\aryst
 \sum_{a \in A} \rho_{a} \equiv 1, \quad supp(\rho_a) \subset R_a, \forall a \in A.
\earyst
For each $u \in C^{\infty}(\T^2)$, we define 
\aryst
\norm{u}_{\Theta,p,q} = (\sum_{a \in A} \norm{(\rho_{a} u) \circ \kappa_a}_{\Theta,p,q}^2 )^{\frac{1}{2}}
\earyst
and we let $W_{\Theta, p, q}(\T^2)$ be the completion of $C^{\infty}(\T^2)$ with respect to $\norm{\cdot}_{\Theta,p,q}$.

\begin{rema}
The construction of anisotropic Banach spaces adapted to dynamically systems was originally due to Baladi and Tsujii in \cite{BaTs}, and then used by Tsujii in \cite{Ts} to study a class of suspension semi-flows. Similar ideas also appeared in \cite{AvGoTs}. In their papers, the dynamics are either uniformly hyperbolic, or have natural invariant measures, so they only studied the case where $q \leq 0 < p$ in order to be able to prove decay for rough observables. We need to consider $0 < q < p$ in order to prove our uniqueness of SRB measure.
\end{rema}

\subsection{Transfer operators and Lasota-Yorke's inequality}
In the rest of this section, we let $r \geq r' \geq 2$, $\ell \geq 2$ and assume that $f$ is $C^r$ close to $\cU^{rot}_{\ell, r}$.
It is a classical fact and easy to verify that the density $\rho$ (w.r.t. the Lebesgue measure ) of any absolute continuous $f-$invariant measure $\mu$ is a fixed point of the Perron-Frobenius operator $ \PF_f : L^{1}(\T^2) \to L^1(\T^2)$ associated to $f$, defined by,
\aryst
\PF_f u(z)  = \sum_{w \in f^{-1}(z)} u(w) \det(Df(w))^{-1}, 
\earyst
Moreover, we have for any $u,v \in L^2(\T^2)$ that
\ary \label{PartII2eq pf operator}
(\PF_f u, v )_{L^2} = (u, v \circ f)_{L^2}.
\eary
In the following, we briefly denote $\PF = \PF_f$.

We define for any $n \in \N$, any $a,b \in A$, any $u \in C_0^{r-1}(R)$ that
\aryst
P^n_{a,b}u (x) = \rho_a \kappa_a (x) \sum_{\kappa_b(y) \in f^{-n}( \kappa_a (x))} u(y) \det(Df^{n}(\kappa_{b}(y)))^{-1}
\earyst

Then for any $0 \leq p,q \leq r-1$, any polarisation $\Theta$, any $u \in C^{r-1}(\T^2)$, we have
\ary \label{PartII2normpfuthetapq2}
\norm{\PF^n u}_{\Theta, p,q}^2 &=& \sum_{a \in A}\norm{\sum_{b \in A}(\rho_{a} \PF^n ( \rho_{b} u)) \circ \kappa_a}_{\Theta, p, q}^2  \nonumber \\
&\leq&C \sum_{a \in A}\sum_{b \in A}\norm{(\rho_{a} \PF^n ( \rho_{b} u)) \circ \kappa_a}_{\Theta,p,q}^2  \nonumber \\
&=&C \sum_{a \in A}\sum_{b \in A} \norm{ P^n_{a,b} ((\rho_{b} u ) \circ \kappa_b )}_{\Theta, p,q}^2
\eary

We fix any constants  $\gamma_0 \in (\ell^{-1}, 1), \theta > 0$ such that \eqref{PartII2cone strictly invariant} is satisfied for $f, \CC_0 = \CC(\theta)$ and $\gamma_0$. This is true if, for example, when $f \in \cU^{rot}_{\ell, r}$ and \eqref{PartII2gamma0theaf} is satisfied. In the following, $m(f)$ is defined using cone $\CC_0$.

Let $\check{\Theta}, \Theta, \Theta', \hat{\Theta}$ be polarisations denoted by
\aryst
\check{\Theta} = (\check{\CC}_{\pm}, \check{\varphi}_{\pm}),  \quad
\Theta' = (\CC'_{\pm}, \varphi'_{\pm}), \quad \Theta = (\CC_{\pm}, \varphi_{\pm}), \quad \hat{\Theta} = (\hat{\CC}_{\pm}, \hat{\varphi}_{\pm})
\earyst
such that
\ary \label{PartII2checkhat}
\check{\Theta} < \Theta' <  \Theta  < \hat{\Theta}
\eary
and
\ary
 \label{PartII2check cc cc hat cc relation 2}
(\CC(\gamma_0 \theta))^{*} \Subset \hat{\CC}_{-} &\Subset& (\R^2 \setminus \check{\CC}_{+}) \Subset \CC_0^{*}, \\
(Df^{-1}_{z})^{tr}(\R^2 \setminus \check{\CC}_{+}) &\Subset& \hat{\CC}_{-}, \quad \forall z \in \T^2  \label{PartII2check cc cc hat cc relation 3}
\eary
Moreover, we always assume that $\R(0,1)$ is contained in the interior of $\hat{\CC}_{-}$.
Such choice is possible since by \eqref{PartII2cone strictly invariant}, 
\ary \label{PartII2dfz-1trdfzsub}
(Df_z^{-1})^{tr}(\CC_0^{*}) = ((Df_z)(\CC_0))^{*}\Subset (\CC(\gamma_0 \theta))^{*}, \quad \forall z \in \T^2
\eary

In the following, we fix $\hat{\Theta}, \Theta', \Theta, \check{\Theta}$. For any $h \in (0, \log \ell)$, integer $N_0 > 0$, we let $\cU^{h, N_0}$ be the set of $C^r$ covering maps $g: \T^2 \to \T^2$ of  degree $\ell$ satisfying \eqref{PartII2cone strictly invariant} and
\enmt
\item $\norm{g^{N_0}}_{C^r} < \ell^{N_0}e^{N_0h}$,
\item $\ell^{N_0}e^{\frac{N_0h}{2}} >  \frac{\norm{Dg_z^{N_0}(v)}}{\norm{v}} > \ell^{N_0}e^{-\frac{N_0h}{2}}, \forall  z \in \T^2, v \neq 0, v \in \CC_0$,
\item $e^{\frac{N_0h}{2}} > \frac{\norm{Dg_z^{N_0}(v)}}{\norm{v}} > e^{-\frac{N_0h}{2}}, \forall z \in \T^2, v \neq 0, Dg^{N_0}_z(v) \notin \CC_0$,
\item $(Dg_z^{N_0})^{tr}(v) \in \R^2 \setminus \hat{\CC}_{-} \mbox{ and }\frac{\norm{(Dg_z^{N_0})^{tr}(v)}}{\norm{v}} > \ell^{N_0}e^{-N_0h}, \forall  z \in \T^2, v \neq 0, v \in \R^2 \setminus \hat{\CC}_{-}$.
\eenmt

It is straightforward to check that for any $f^{rot} \in \cU^{rot}_{\ell, r}$, for any $h > 0$, there exists $N_0 = N_0(f^{rot}, h) > 0$ such that $\cU^{h, N_0}$ contains an $C^r$ open neighbourhood of $f^{rot}$ in $C^r(\T^2, \T^2)$.

By \eqref{PartII2cone strictly invariant}, for any $f \in \cU^{h, N_0}$, $N > N_0$, denote a local inverse branch of $f^N$ denoted by $H : U \to \T^2$, i.e. $f^{N}H = Id |_{U}$, we have
\aryst
DH_z(\R^2 \setminus \CC_0) \Subset ( \R^2 \setminus \CC_0), \quad \forall z \in U
\earyst
Then for any $N > N_0$ we have 
\ary  \label{PartII2term 1}
\det(Df^{N}_z) \geq C\ell^{N}e^{-Nh}, \quad \forall z \in \T^2
\eary
and for all  $H$ as above, we have
\ary  \label{PartII2term 2}
\norm{DH_{z}} \leq C e^{N h}, \quad \det(DH_z) \geq C \ell^{-N} e^{-Nh}, \forall z \in U \\
 \label{PartII2term 3} \quad
\norm{(DH_z)^{tr}(v)} \leq C \ell^{-N}e^{N h} \norm{v}, \forall z \in U, v \neq 0, (DH_z)^{tr}(v) \notin \hat{\CC}_{-}
\eary

We have the following.
\begin{prop} \label{PartII2prop ly ineq chart} 
Given integers $r \geq 13, 3 \leq q+3 \leq p < \frac{r}{2} - 3$. 
For any $h \in (0, \log \ell)$, integer $N_0 > 0$, any $f \in \cU^{h,N_0}$, any $u \in C^{r-1}_0(R)$, for any $a,b \in A$, we have
\aryst
\norm{P_{a,b}u}_{\Theta,p,q} \leq C\norm{u}_{\Theta,p,q}
\earyst
If in addition that $q \geq 1$, then for any 
\ary\label{PartII2essential spectrum radius}
 \tilde{m}  > m_0(f,h,p,q) := \max(e^{(2q+3)h} m(f), e^{(2p+3)h}\ell^{-(2p-1)}) 
\eary
we have
\aryst
\norm{P_{a,b}u}_{\Theta,p,q} \leq C \tilde{m}^{n/2} \norm{u}_{\Theta,p,q} + C_n \norm{u}_{\Theta', p-1,q-1}
\earyst
\end{prop}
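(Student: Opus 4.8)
The plan is to prove the two Lasota--Yorke type estimates for $P_{a,b} := P^1_{a,b}$, and then bootstrap to $P^n_{a,b}$; the statement as written controls one iterate, but the $m(f)$-dependent bound requires decomposing $f^n$ into inverse branches and tracking which branches are mutually transverse. I would first reduce $P^n_{a,b}$ to a finite sum of elementary transfer operators of the type studied in Lemma \ref{PartII2lemma ly-ineq local}: for each pair $a,b$ and each inverse branch $H$ of $f^n$ sending $R_a$ (pulled back by $\kappa_a$) into $R_b$, we get a weighted composition operator $L_H u(x) = \rho_H(x)\, u(S_H(x))$ with $S_H = \kappa_b^{-1}\circ H\circ \kappa_a$ and $\rho_H$ incorporating $\rho_a\circ\kappa_a$ and the Jacobian $\det(Df^n)^{-1}$ along that branch. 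The first (boundedness) estimate then follows directly from Lemma \ref{PartII2lemma ly-ineq local} applied to each $L_H$, the triangle inequality, and the elementary observation that the number of inverse branches is $\ell^n$ while $\gamma(S_H)^{-1/2}\asymp \ell^{-n/2}$ (times $e^{O(nh)}$), so the crude sum is $O(1)$; one also must check the cone condition $(DS_H)^{tr}(\R^2\setminus\CC_+)\Subset\CC'_-$, which is exactly what \eqref{PartII2check cc cc hat cc relation 3}, \eqref{PartII2cone strictly invariant} and the choices \eqref{PartII2checkhat}--\eqref{PartII2check cc cc hat cc relation 3} are designed to give, uniformly on the $C^r$ neighbourhood.

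For the sharp estimate, I would split the sum over branches according to the definitions in Definition \ref{PartII2cone and transv}. The $\norm{\cdot}^+_{\Theta',p}$ component is the easy one: by Lemma \ref{PartII2lemma ly-ineq local} each branch contributes $C\|\rho_H\|_\infty \gamma(S_H)^{-1/2}\Lambda(S_H,\Theta')^p$, and since $\Lambda$ measures expansion transverse to the center cone — which for $f\in\cU^{h,N_0}$ is bounded by $e^{nh/2}$ by condition 3, while $\gamma(S_H)^{-1/2}\le C\ell^{-n/2}e^{nh/2}$ and $\|\rho_H\|_\infty\le C\ell^{n}e^{nh}\det$-type bounds cancel — summing $\ell^n$ branches gives $C(e^{(2p+3)h}\ell^{-(2p-1)})^{n/2}$, matching the second term in $m_0$. (The bookkeeping of powers of $\ell$ and $e^h$ here is the routine part.) The $\norm{\cdot}^-_{\Theta',q}$ component is where $m(f)$ enters: Lemma \ref{PartII2lemma ly-ineq local} bounds a single branch's contribution by $C\|\rho_H\|_\infty\gamma(S_H)^{-1/2}\|DS_H\|^q$, and $\|DS_H\|\le Ce^{nh}$ by condition 3 applied to the center direction, so each branch contributes $\lesssim (e^{(2q+1)h}\ell^{-1})^{n/2}\cdot(\text{stuff})$; naively summing $\ell^n$ of these is too lossy by a factor $\ell^{n/2}$. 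The point is to use the almost-orthogonality in the $W_{\Theta',p,q}$-norm of pieces whose $DS_H^{tr}$-images of the relevant cone are disjoint: group the branches over a point $z$ into $\pitchfork$-equivalence classes; within a class the number of branches is, by definition of $m(f,n)$, at most $\ell^n m(f,n)$, while distinct classes land in transverse cones and hence their outputs are (almost) orthogonal for $\norm{\cdot}^-$, so one gains an $\ell^{n/2}$ back. Combined with Lemma \ref{PartII2lem norm unit partition} (to pass between overlapping charts, absorbing the intersection multiplicity $\nu$, which is a fixed constant) and \eqref{PartII2m is less that its finite scale approx}, this yields the bound $C(e^{(2q+3)h}m(f))^{n/2}$ up to an arbitrarily small loss, i.e. any $\tilde m > m_0(f,h,p,q)$; the lower-order term $C_n\norm{u}_{\Theta',p-1,q-1}$ collects all the $C'$-terms from Lemma \ref{PartII2lemma ly-ineq local} and the commutator errors from Lemma \ref{PartII2lem norm unit partition}.

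The main obstacle is the almost-orthogonality step for the $\norm{\cdot}^-$ semi-norm across $\pitchfork$-classes: one must show that when $Df^n_{w}(\CC_0)$ and $Df^n_{w'}(\CC_0)$ meet only at $0$, the corresponding Littlewood--Paley pieces $u_{H,\Theta',n,-}$ have frequency supports that are genuinely separated in angle (after applying $(DS_H)^{tr}$ and localising), so that $\|\sum_H L_H u\|^-_{\Theta'}{}^2 \lesssim \sum_{\text{classes}} \|\sum_{H\in\text{class}}L_H u\|^-_{\Theta'}{}^2$. This is essentially the mechanism behind Tsujii's transversality argument in \cite{Ts2, Ts}, and I would follow that template, the new features being that here $q>0$ (so one is in the smaller, smoother spaces, which only helps for this inequality but forces the constraint $q+3\le p$ and $p<r/2-3$ to keep the commutator terms controlled) and that the finite-scale exponent $m(f,n)$ must be used in place of its limit $m(f)$, which is harmless by \eqref{PartII2m is less that its finite scale approx} at the cost of the $\tilde m > m_0$ slack. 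I would also need to confirm the branch-counting and cone-invariance are uniform over a $C^r$ neighbourhood of $f$, which follows from the openness of conditions 1--4 defining $\cU^{h,N_0}$ and the upper semicontinuity of $m(f)$ noted after \eqref{PartII2m is less that its finite scale approx}.
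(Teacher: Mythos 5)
Your proposal follows essentially the same route as the paper's proof, which is itself an adaptation of Lemma 2.6 of Tsujii's suspension-flow paper: decompose $P^n_{a,b}$ into branchwise weighted composition operators, estimate each branch with Lemma \ref{PartII2lemma ly-ineq local}, use a partition-of-unity lemma (Lemma \ref{PartII2lem norm unit partition}) to glue, and gain over the crude triangle inequality for the $\norm{\cdot}^{-}$ component via almost-orthogonality of branches whose $Df^n(\CC_0)$-images are transverse (the paper's Lemma \ref{PartII2lem transversality and norm}), with the finite-scale $m(f,n)$ controlling the number of non-transverse branches and \eqref{PartII2m is less that its finite scale approx} absorbing the slack into $\tilde m > m_0$. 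You have correctly identified every ingredient, including the role of the constraints $q+3\le p < r/2-3$ and of condition 3 of $\cU^{h,N_0}$ in the exponent bookkeeping, so there is nothing substantive to add.
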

\begin{proof}
The proof is an easy adaptation of Lemma 2.6 in \cite{Ts} using Lemma \ref{PartII2lemma ly-ineq local} instead of Lemma 2.4 in \cite{Ts}. We will only give a sketched proof. The reader is referred to \cite{Ts} for details.

By \eqref{PartII2essential spectrum radius}, we have $e^{-(2q+3)nh}\tilde{m}^{n} > m(f)^{n}$.
By \eqref{PartII2m is less that its finite scale approx}, we can assume that $n$ is sufficiently large, so that $m(f,n) < e^{-(2q+3)nh}\tilde{m}^{n}$.
We will choose a covering of the closure of $R$ by finitely many little open cubes in $Q$ with intersection multiplicity bounded by $10$, denoted by $\{D(\omega)\}_{ \omega \in \cA}$. Take a family of $C^{\infty}$ functions $\{ g_{\omega} : \R^2 \to [0,1] \}_{\omega \in \cA}$ such that $supp(g_{\omega}) \Subset D(\omega)$ and $\sum_{\omega \in \cA} g_{\omega}(z) = 1$ for any $z \in R$.

Fix $u \in C^{r-1}(R)$, $a,b \in A$, $\omega \in \cA$, we denote the connected components of the preimage  $f^{-n}(\kappa_aD(\omega)) \bigcap R_b$ by $\kappa_b(D(\omega,i)), 1 \leq i \leq I(\omega)$, where $D(\omega, i) \subset R$ are open sets. By letting $D(\omega)$ to be small, we can ensure that for each $1 \leq i \leq I(\omega)$,  $\kappa_a^{-1} f^n \kappa_b : D(\omega,i) \to  D(\omega)$ is a $C^{r}$ injection; and  
 by setting $i \pitchfork_{\omega}j$ if 
\ary \label{PartII2chooseinvarsebranches} \quad
\overline{Df_{w}^{n}(\CC_0)} \bigcap \overline{Df_{w'}^{n}(\CC_0)} = \{0\}, \quad \forall w \in D(\omega,i),w' \in D(\omega,j),
\eary
 for each $i$ there are at most $m(f,n)$ many $j$ such that $i \not\pitchfork_{\omega}j$.

 We define functions $\{ g_{\omega, i} : \R^2 \to [0,1] \}_{i=1}^{I(\omega)}$ by 
\aryst
g_{\omega, i}(y) = \begin{cases}( g_{\omega} \rho_{a} \circ \kappa_a)(\kappa_{a}^{-1}f^{n}\kappa_b(y)) , \quad y \in D(\omega, i) \\ 0, \quad\mbox{ otherwise} \end{cases}
\earyst
We claim that $g_{\omega,i}$ is $C^{r-1}$ in $R$.
Indeed, it is clear that $g_{\omega, i}$ is $C^{r-1}$ in  $D(\omega, i)$ and continuously extends up to the boundary. Moreover, for any $z \in \partial D(\omega, i) \bigcap R$, we have $\kappa_a^{-1} f^n \kappa_b (z) \in \partial D(\omega)$, for otherwise an open neighborhood of $z$ would be mapped into $D(\omega)$, thus $z \in D(\omega, i)$, a contradiction. While $g_{\omega}$ vanishes on an open neighbourhood of $\partial D(\omega)$. This implies our claim, and also proves that $g_{\omega, i}$ vanish in an open neighbourhood of  
$\overline{\partial D(\omega,i) \bigcap R}$.
Define $
g_{\omega} := \sum_{i=1}^{I(\omega)}g_{\omega,i}$.
Since $D(\omega,i), 1 \leq i \leq I(\omega)$ are mutually disjoint, we have $0 \leq g_{\omega} \leq 1$.
Define
\aryst
g(y) = \sum_{\omega \in \cA} g_{\omega}(y) = \begin{cases}  \rho_a f^{n}\kappa_b(y) , \quad y \in \kappa_b^{-1}(f^{-n}(R_a) \bigcap R_b), \\ 0, \quad\mbox{otherwise} \end{cases}
\earyst
We can easily verify that $0 \leq g \leq 1$ and $g$ is $C^{r-1}$ in the interior of $R$. 

Let $u \in C^{r-1}_0(R)$. We have the following,
\enmt
\item for any $1 \leq i \leq I(\omega)$, define $u_{\omega, i} := g_{\omega, i} u $. Then we have 
$
supp(u_{\omega, i}) \Subset D(\omega,i),
$ since $g_{\omega,i}$ vanish in an open neighbourhood of $\overline{\partial D(\omega,i) \bigcap R}$ and $u$ vanish in an open neighbourhood of $\partial R$.
\item for any $1 \leq i \leq I(\omega)$, define
\aryst
v_{\omega,i}(x) = \begin{cases} (u_{\omega, i}  \det(Df^{n}\circ \kappa_b(\cdot))^{-1})( \kappa_b^{-1}f^{-n}\kappa_a(x)), x \in \kappa_a^{-1}f^{n}\kappa_b(D(\omega, i)) \\ 0, \mbox{ otherwise}\end{cases}
\earyst
We have that $v_{\omega, i} \in C^{r-1}(R)$,
\item let $v_{\omega} := \sum_{i=1}^{I(\omega)} v_{\omega, i} = g_{\omega}P^n_{a,b}u$,
\item we have $P^n_{a,b}u = \sum_{\omega \in \cA} v_{\omega}$.
\eenmt

Denote by $S = \kappa_b^{-1}f^{-n}\kappa_a : \kappa_a^{-1}f^{n}\kappa_b(D(\omega, i)) \to D(\omega)$.
By $f \in \cal U^{h, N_0}$ and \eqref{PartII2term 1},  \eqref{PartII2term 2}, \eqref{PartII2term 3}, we have for any $n \geq 1$
\aryst
&&\norm{(\det(Df^{n}))^{-1}}_{L^{\infty}} \leq C \ell^{-n} e^{nh}, \\
&&\gamma(S) \geq C \ell^{-n} e^{-nh}, \quad \norm{DS} \leq C e^{nh}, \quad \Lambda(S, \hat{\Theta}) < C \ell^{-n}e^{nh}
\earyst
Then by Lemma \ref{PartII2lemma ly-ineq local} and our hypothesis that $p,q \in [0, \frac{r}{2} - 3)$, we have
\aryst
\norm{v_{\omega,i}}_{\hat{\Theta}, p, q} \leq C_n\norm{u_{w,i}}_{\check{\Theta}, p, q} 
\earyst
Moreover, if $q \geq 1$, then 
\enmt
\item $\norm{v_{\omega,i}}_{\hat{\Theta}, q}^{-} \leq C {\ell}^{-\frac{n}{2}}e^{(q+\frac{3}{2})nh} \norm{u_{w,i}}_{\check{\Theta}, p, q} + C_n\norm{u_{\omega, i}}_{\check{\Theta}, p-1, q-1}$,
\item $\norm{v_{w,i}}_{\hat{\Theta}, p}^{+} \leq C {\ell}^{-(p + \frac{1}{2})n} e^{(p + \frac{3}{2})nh}  \norm{u_{w,i}}_{\check{\Theta}, p, q} + C_n\norm{u_{\omega, i}}_{\check{\Theta}, p-1, q-1}$,
\eenmt
We choose polarisations $\{\Theta(\omega,i) = (\CC_{\omega,i,\pm}, \varphi_{\omega,i,\pm})\}_{i=1}^{I(\omega)}$ such that for all $1 \leq i \leq I(\omega)$, 
\aryst
((Df^n_x)^{tr})^{-1}(\R^2 \setminus \check{\CC}_{+}) \Subset \CC_{\omega,i, -} \Subset (\R^2 \setminus \CC_{\omega,i,+}) \Subset \hat{\CC}_{-}, \forall x \in D(\omega,i)
\earyst
and 
\ary \label{PartII2conedisjoint}
\overline{(\R^2 \setminus \CC_{\omega,i, +})} \bigcap \overline{(\R^2 \setminus \CC_{\omega,j, +})} = \{0\}, \mbox{ if } i \pitchfork_{\omega}j
\eary
It is possible by $i \pitchfork_{\omega} j$, \eqref{PartII2check cc cc hat cc relation 2}, \eqref{PartII2dfz-1trdfzsub} and \eqref{PartII2chooseinvarsebranches}.

It is clear that $\hat{\Theta} < \Theta(\omega,i)$ for all $\omega \in \cA, 1 \leq i \leq I(\omega)$.
\begin{lemma}\label{PartII2lem transversality and norm}
If $i \pitchfork_{\omega} j$, we have
\aryst
\sum_{n \geq 0} 2^{2nq}|(\psi_{\hat{\Theta}, n, -}(D)v_{\omega, i},  \psi_{\hat{\Theta}, n, -}(D)v_{\omega, j} )_{L^2}| \leq C \norm{v_{\omega, i}}_{\Theta(\omega, i), p-1, q-1}  \norm{v_{\omega, j}}_{\Theta(\omega, j), p-1, q-1}
\earyst
\end{lemma}
\begin{proof}
The proof is similar to that of Lemma 2.7 in \cite{Ts}.
For $k = i,j$, put $w_{k,n} = \psi_{\hat{\Theta}, n,-}(D)v_{\omega,k}, w'_{k,n} = \psi_{\Theta(\omega,k), n,-}(D)v_{\omega,k}, w''_{k,n} = w_{k,n} - w_{k,n}'$. By \eqref{PartII2conedisjoint}, for $n > 0$ we have $(w'_{i,n}, w'_{j,n})_{L^2} = 0$. While $2^{(q-1)n}\norm{w'_{i,n}}_{L^2} \leq \norm{v_{\omega,i}}_{\Theta(\omega,i), p-1,q-1}$ and $2^{(p-1)n}\norm{w''_{i,n}}_{L^2} \leq \norm{v_{\omega,i}}_{\Theta(\omega,i), p-1,q-1}$. We have the similar thing for $j$. Thus by $p \geq q+3$,
\aryst
|(w_{i,n}, w_{j,n})_{L^2}| &\leq& (2 \cdot 2^{-(q-1 + p-1)n} + 2^{-(2p-2)n})\norm{v_{\omega, i}}_{\Theta(\omega, i), p-1, q-1}  \norm{v_{\omega, j}}_{\Theta(\omega, j), p-1, q-1} \\
&\leq& 10\cdot 2^{-(2q+1)n}\norm{v_{\omega, i}}_{\Theta(\omega, i), p-1, q-1}  \norm{v_{\omega, j}}_{\Theta(\omega, j), p-1, q-1}
\earyst
The lemma follows from direct computations.
\end{proof}
By $((Df^n_x)^{tr})^{-1}(\R^2 \setminus \check{\CC}_{+}) \Subset \CC_{\omega,i, -} $ and Lemma \ref{PartII2lemma ly-ineq local}, we have for $q \geq 1$ and all $1 \leq i \leq I(\omega)$,
\ary\label{PartII2vomegaiuomegaip-1q-1}
 \norm{v_{\omega,i}}_{\Theta(\omega,i), p-1,q-1} \leq C_n\norm{u_{\omega,i}}_{\check{\Theta},p-1,q-1}
\eary

Then the rest of the proof follows almost exactly that of Lemma 2.6 in \cite{Ts}.
By Lemma \ref{PartII2lem norm unit partition}, for any $u \in C_{0}^{r-1}(R)$, we have for any $n \geq 1$,
\ary
\label{PartII2pnabuthetapq2} \\
\norm{P^n_{a,b}u}_{\Theta, p, q}^2 &=& \norm{\sum_{\omega \in \cA} v_{\omega}}_{\Theta, p, q}^2 \leq C\sum_{\omega \in \cA} \norm{ v_{\omega}}_{\hat{\Theta}, p, q}^2 + C_n\sum_{\omega \in \cA} \norm{ v_{\omega}}_{\hat{\Theta}, p-1, q-1}^2 \nonumber
\eary
By Lemma \ref{PartII2lem transversality and norm}, Cauchy's inequality and \eqref{PartII2vomegaiuomegaip-1q-1},
\ary
(\norm{ v_{\omega}}_{\hat{\Theta}, q}^{-})^2 
&\leq& C \sum_{i} \sum_{j \not\pitchfork_{\omega} i} \ell^{-n} e^{(2q+3)nh} \frac{\norm{u_{\omega, j}}_{\check{\Theta}, p, q}^2 + \norm{u_{\omega,i}}_{\check{\Theta}, p, q}^2}{2} + C_n \sum_{i} \norm{u_{\omega, i}}_{\check{\Theta}, p-1, q-1}^2  \nonumber  \\
&&\leq C \tilde{m}^{n}  \sum_{i} \norm{u_{\omega, i}}_{\check{\Theta}, p, q}^2  + C_n \sum_{i} \norm{u_{\omega, i}}_{\check{\Theta}, p-1, q-1}^2 \label{PartII2vomegahatthetaq-2}
\eary
By Lemma \ref{PartII2lem norm unit partition} and the calculation of $g$, 
\ary
\sum_{\omega \in \cA} \sum_{i} \norm{u_{\omega, i}}_{\check{\Theta}, p, q}^2 &\leq& C\norm{u}_{\Theta, p, q}^2 + C_n\norm{u}_{\Theta', p-1, q-1}^2 \label{PartII2sumomegaincAsuminormuomegai}\\
\sum_{\omega \in \cA} \sum_{i} \norm{u_{\omega, i}}_{\check{\Theta}, p-1, q-1}^2 &\leq& C_n \norm{u}_{\Theta', p-1,q-1}^2 \label{PartII2sumomegaincAchecktheta}
\eary
By \eqref{PartII2vomegahatthetaq-2}, \eqref{PartII2sumomegaincAsuminormuomegai}, \eqref{PartII2sumomegaincAchecktheta}, we obtain
\ary \label{PartII2sumomegahatq-111}
\sum_{\omega \in \cA}(\norm{ v_{\omega}}_{\hat{\Theta}, q}^{-})^2 \leq C \tilde{m}^2\norm{u}_{\Theta, p,q}^2 + C_n\norm{u}_{\Theta',p-1,q-1}^2
\eary
We have
\aryst
&&(\norm{ v_{\omega}}_{\hat{\Theta}, p}^{+})^2 \leq \ell^{2n} \sum_{i} (\norm{v_{\omega,i}}^{+}_{\hat{\Theta}, p})^2 \\
&\leq& C\sum_{i}\ell^{-n(2p-1)}e^{(2p+3)nh} \norm{u_{\omega,i}}_{\check{\Theta}, p, q}^2 +  C_n \sum_{i} \norm{u_{\omega, i}}_{\check{\Theta}, p-1, q-1}^2 
\earyst
Again by  \eqref{PartII2sumomegaincAsuminormuomegai}, \eqref{PartII2sumomegaincAchecktheta}, we obtain
\ary \label{PartII2sumomegahatp+222}
\sum_{\omega \in \cA}(\norm{ v_{\omega}}_{\hat{\Theta}, p}^{+})^2&\leq&C\tilde{m}^{n} \norm{u}_{\Theta, p, q}^2 +  C_n \norm{u}^2_{\Theta', p-1, q-1} 
\eary

Finally by Lemma \ref{PartII2lem norm unit partition}, Lemma \ref{PartII2lemma ly-ineq local} and $\hat{\Theta} < \Theta(\omega,i)$,  \eqref{PartII2vomegaiuomegaip-1q-1}, we have
\aryst
\norm{ v_{\omega}}_{\hat{\Theta}, p-1, q-1}^2 \leq C_n\sum_{i} \norm{v_{\omega,i}}_{\Theta(\omega,i), p-1,q-1}^2 \leq C_n\sum_{i}\norm{u_{\omega,i}}_{\check{\Theta},p-1,q-1}^2
\earyst
By \eqref{PartII2sumomegaincAchecktheta}, 
\ary \label{PartII2sumomegahatthetap'q'}
\sum_{\omega \in \cA}\norm{ v_{\omega}}_{\hat{\Theta}, p-1, q-1}^2 \leq C_n \norm{u}_{\Theta',p-1,q-1}^2
\eary
Then lemma follows from \eqref{PartII2pnabuthetapq2}, \eqref{PartII2sumomegahatq-111}, \eqref{PartII2sumomegahatp+222}, \eqref{PartII2sumomegahatthetap'q'}.

\end{proof}
\begin{cor} \label{PartII2lasotayorkefinalform}
Given integers $r \geq 13, 3 \leq q+3 \leq p < \frac{r}{2} - 3$. Let $h, N_0, f$ be given by Proposition \ref{PartII2prop ly ineq chart}.
For any $u \in C^{\infty}(\T^2)$, we have,
\aryst
\norm{\PF u}_{\Theta, p, q} \leq C\norm{u}_{\Theta,p,q}
\earyst
If in addition that $q \geq 1$, then for any $\tilde{m}$ in \eqref{PartII2essential spectrum radius}, there exists $M > 0$ such that for any $u \in C^{\infty}(\T^2)$, any $n \in \N$,
\aryst
\norm{\PF^n u}_{\Theta, p, q} \leq C\tilde{m}^{\frac{n}{2}} \norm{u}_{\Theta, p, q} +CM^n\norm{u}_{\Theta', p-1, q-1}
\earyst
\end{cor}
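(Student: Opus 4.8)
The plan is to globalise the chart-wise Lasota--Yorke estimates of Proposition~\ref{PartII2prop ly ineq chart} to $\T^2$ through the decomposition \eqref{PartII2normpfuthetapq2}, and then to run a short bootstrap so that the iteration-dependent constant in the resulting inequality is upgraded to one of the form $CM^n$. First I would treat the plain bound $\norm{\PF u}_{\Theta,p,q}\le C\norm{u}_{\Theta,p,q}$: for $u\in C^\infty(\T^2)$ each $(\rho_b u)\circ\kappa_b$ lies in $C^\infty_0(R)\subset C^{r-1}_0(R)$, so applying the first inequality of Proposition~\ref{PartII2prop ly ineq chart} to every term of \eqref{PartII2normpfuthetapq2} with $n=1$, summing over the finite set $A$, and using the defining identity $\sum_{b\in A}\norm{(\rho_bu)\circ\kappa_b}^2_{\Theta,p,q}=\norm{u}^2_{\Theta,p,q}$ does the job. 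Running the identical argument at regularity level $(p-1,q-1)$ --- admissible since $q\ge1$ and $q+3\le p$ guarantee $3\le (q-1)+3\le p-1<\frac{r}{2}-3$ --- produces a constant $K\ge 1$ with $\norm{\PF u}_{\Theta',p-1,q-1}\le K\norm{u}_{\Theta',p-1,q-1}$, hence $\norm{\PF^j u}_{\Theta',p-1,q-1}\le K^j\norm{u}_{\Theta',p-1,q-1}$ and $\norm{\PF^j u}_{\Theta,p,q}\le K^j\norm{u}_{\Theta,p,q}$ for all $j$.

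For the iterated inequality, assume $q\ge 1$, fix $\tilde{m}$ as in \eqref{PartII2essential spectrum radius}, and pick an auxiliary $\tilde{m}'\in(m_0(f,h,p,q),\tilde{m})$. Combining \eqref{PartII2normpfuthetapq2} with the second inequality of Proposition~\ref{PartII2prop ly ineq chart} (applied with $\tilde{m}'$), summing over $A$ and using $(x+y)^2\le 2x^2+2y^2$, gives, with a constant $C_0$ independent of $n$, an estimate $\norm{\PF^n u}_{\Theta,p,q}\le C_0(\tilde{m}')^{n/2}\norm{u}_{\Theta,p,q}+C_n\norm{u}_{\Theta',p-1,q-1}$. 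Now I would fix a single integer $n_0$ so large that $a:=C_0(\tilde{m}')^{n_0/2}\le\tilde{m}^{n_0/2}$ --- possible because $\tilde{m}'<\tilde{m}$ --- and set $b:=C_{n_0}$, so that $\norm{\PF^{n_0}u}_{\Theta,p,q}\le a\norm{u}_{\Theta,p,q}+b\norm{u}_{\Theta',p-1,q-1}$. Iterating this inequality $k$ times and controlling the low-regularity term by $\norm{\PF^{n_0 j}u}_{\Theta',p-1,q-1}\le K^{n_0 j}\norm{u}_{\Theta',p-1,q-1}$ yields $\norm{\PF^{n_0 k}u}_{\Theta,p,q}\le a^k\norm{u}_{\Theta,p,q}+b\,(2L)^k\norm{u}_{\Theta',p-1,q-1}$, where $L:=\max(a,K^{n_0})$ depends only on $n_0$. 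Finally, for an arbitrary $n=n_0 k+s$ with $0\le s<n_0$, composing with $\PF^s$ (of operator norm $\le K^{n_0}$ on $W_{\Theta,p,q}(\T^2)$) and using $a^k\le\tilde{m}^{(n-s)/2}\le C_1\tilde{m}^{n/2}$ for a suitable $C_1$, together with $(2L)^k\le M^n$ where $M:=(2L)^{1/n_0}$, gives $\norm{\PF^n u}_{\Theta,p,q}\le C\tilde{m}^{n/2}\norm{u}_{\Theta,p,q}+CM^n\norm{u}_{\Theta',p-1,q-1}$ for a constant $C$ depending on $n_0$, which is what we want.

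The one genuinely non-routine point --- and the main obstacle --- is exactly this upgrade from $C_n$ to $CM^n$. The constant $C_n$ in Proposition~\ref{PartII2prop ly ineq chart} is inherited from the constant $C'$ of Lemma~\ref{PartII2lemma ly-ineq local}, which depends on the inverse branches of $f^n$ (through $\norm{DS}$, $\gamma(S)$, and the chosen partitions), so it carries no a priori bound in $n$ and one cannot simply pass to a supremum. The bootstrap sidesteps this by invoking the ``strong'' estimate for only one fixed exponent $n_0$ and pushing all remaining $n$-dependence into the harmless geometric factor $M^n$ by means of the crude but uniform boundedness of $\PF$ on the weaker space $W_{\Theta',p-1,q-1}(\T^2)$; the rest --- the chart decomposition, the sums over $A$, the convexity inequalities, the remainder factor $\PF^s$ --- is bookkeeping. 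One should also make sure that descending to regularity $(p-1,q-1)$ keeps the hypotheses $r\ge13$, $3\le q+3\le p<\frac{r}{2}-3$ of Proposition~\ref{PartII2prop ly ineq chart} valid, which is precisely the role of the assumption $q\ge1$.
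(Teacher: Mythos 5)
Your proposal is correct and takes essentially the same route as the paper: globalise the chart-wise estimates of Proposition~\ref{PartII2prop ly ineq chart} through \eqref{PartII2normpfuthetapq2}, fix a single large exponent $n_0$ (the paper's $N$) for which the strong inequality beats $\tilde{m}^{n_0/2}$, and then bootstrap by iterating that one inequality while absorbing the low-regularity term via the crude operator bound for $\PF$ on $W_{\Theta',p-1,q-1}(\T^2)$ — which, as you note, is available at the level $(p-1,q-1)$ precisely because $q\geq 1$ keeps the hypotheses of the proposition in force. The only cosmetic difference is that you handle the remainder $n=n_0 k+s$ explicitly, whereas the paper phrases the iteration as an induction over multiples of $N$ and folds the remainder into the final constant $C$.
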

\begin{proof}
We choose an arbitrary $\overline{m} \in (m_0(f,h,p,q), \tilde{m})$ ( recall \eqref{PartII2essential spectrum radius}).
By \eqref{PartII2normpfuthetapq2} and Proposition \ref{PartII2prop ly ineq chart} we have
\ary \label{PartII2mPF u2Theta,p,qleq}
\norm{\PF u}^2_{\Theta,p,q} \leq C  \sum_{b \in A} \norm{(\rho_b u) \circ \kappa_b}_{\Theta, p, q}^2  
\leq C \norm{ u}_{\Theta, p, q}^2 
\eary
and for $q \geq 1$,
\aryst
\norm{\PF^N u}^2_{\Theta,p,q} &\leq& C \overline{m}^{N} \sum_{b \in A} \norm{(\rho_b u) \circ \kappa_b}_{\Theta, p, q}^2  + C_N \sum_{b \in A}\norm{(\rho_b u) \circ \kappa_b}_{\Theta', p-1, q-1}^2 \\
&\leq& C \overline{m}^{N} \norm{ u}_{\Theta, p, q}^2 + C_N \norm{u}_{\Theta',p-1,q-1}^2
\earyst
Then
\ary \label{PartII2normpfnu}
\norm{\PF^N u}_{\Theta,p,q} \leq C \overline{m}^{\frac{N}{2} }\norm{ u}_{\Theta, p, q} + C_N \norm{u}_{\Theta',p-1,q-1}
\eary
We fix $N$ to be a large integer so that the coefficient of $\norm{u}_{\Theta,p,q}$ in \eqref{PartII2normpfnu} is less than $\tilde{m}^{\frac{N}{2}}$. Let $M$ be a large constant to be chosen later. We will inductively prove that for all integer $l \geq 1$,
\ary \label{PartII2normpfnlu}
\norm{\PF^{Nl} u}_{\Theta,p,q} \leq \tilde{m}^{\frac{Nl}{2}} \norm{ u}_{\Theta, p, q} + M^{Nl} \norm{u}_{\Theta',p-1,q-1}
\eary
This is true for $l=1$ by \eqref{PartII2normpfnu} and by letting $M > C_N^{\frac{1}{N}}$. Assume that \eqref{PartII2normpfnlu} is prove for $l$. Then by \eqref{PartII2mPF u2Theta,p,qleq} and \eqref{PartII2normpfnlu} we have
\aryst
\norm{\PF^{N(l+1)} u}_{\Theta,p,q} &\leq& \tilde{m}^{\frac{N}{2}} \norm{ \PF^{Nl}u}_{\Theta, p, q} + M^{N} \norm{\PF^{Nl}u}_{\Theta',p-1,q-1} \\
&\leq& \tilde{m}^{\frac{N}{2}}(\tilde{m}^{\frac{Nl}{2}} \norm{u}_{\Theta,p,q} + M^{Nl}\norm{u}_{\Theta',p-1,q-1}) + M^N C^{Nl}\norm{u}_{\Theta', p-1, q-1} \\
&\leq& \tilde{m}^{\frac{N(l+1)}{2}} \norm{u}_{\Theta,p,q} + M^{N(l+1)}\norm{u}_{\Theta',p-1,q-1}
\earyst
The last inequality follows by letting $M > 10\max(\tilde{m}^{\frac{1}{2}}, \tilde{m}^{-\frac{1}{2}}C, C)$. This completes the induction. Our corollary then follows by letting $C$ in the second inequality of our corollary  to be large depending on $N$.
\end{proof}

\subsection{Gou\"ezel-Liverani's perturbation lemma}
We recall an abstract result from \cite[Section 8]{GoLi}.
Let $\cB^{0} \supset \cB^1 \supset \cdots \supset \cB^s, s \in \N$, be a finite family of Banach spaces, let $\{\cal L_{t}\}_{t \in (-1,1)}$ be a family of operators acting on the above Banach spaces. Moreover, assume that
\enmt
\item there exist $M > 0$, for all $t \in (-1,1)$, $\norm{\cal L^{n}_t u}_{\cB^0} \leq C_0M^n \norm{u}_{\cB^0}$,
\item there exists $\alpha \in (0, M)$, for all $t \in (-1,1)$, $\norm{\cal L^{n}_t u}_{\cB^1} \leq C_0\alpha^{n} \norm{u}_{\cB^1}+ C_0M^{n}\norm{f}_{\cB^0}$,
\item there exist operators $Q_1,\cdots, Q_{s-1}$ satisfying 
\aryst
\norm{Q_j}_{\cB^{i} \to \cB^{i-j}} \leq C_1, \forall j=1,\cdots, s-1, i=j,\cdots, s 
\earyst
\item moreover, define $\Delta_0(t) := \cal L_t$ and $\Delta_j(t) := \cal L_t - \cal L_0 - \sum_{k=1}^{j-1} t^{k} Q_k$ for $j\geq 1$, we have
\aryst
\norm{\Delta_{j}(t)}_{\cB^{i} \to \cB^{i-j}} \leq C_1|t|^{j}, \forall t \in (-1,1), j=0,\cdots, s, i=j,\cdots, s
\earyst

\eenmt
In this case, we say that  $\{\cal L_t\}_{t \in (-1,1)}$ is \textit{$(\alpha, M, C_0, C_1)$ adapted to} $\{\cB^i\}_{0 \leq i \leq s}$.

For any integer $1 \leq k \leq s$, any $t \in (-1,1)$, any $\varrho > \alpha$ and $\delta > 0$, denote
\aryst
V_{\delta, \varrho} = \{z \in \mathbb{C}| |z| \geq \varrho, d(z, Sp(\cL_0 : \cB^k \to \cB^k)) \geq \delta, \forall k=1,\cdots, s\}
\earyst

The following theorem in proved in \cite{GoLi}.
\begin{thm}[ Theorem 8.1 in \cite{GoLi} ] \label{PartII2thm goli perturbations}
Given a family of operators $\{\cal L_t\}_{t \in (-1,1)}$ that is $(\alpha, M, C_0, C_1)$ adapted to $\{\cB^{i}\}_{0 \leq i \leq s}$ and set
\aryst
R_{s}(t) = \sum_{k=0}^{s-1} t^{k} \sum_{l_1 + \cdots + l_j = k} (z-\cal L_{0})^{-1} Q_{l_1} (z-\cal L_0)^{-1} \cdots (z-\cal L_0)^{-1} Q_{l_j}(z-\cal L_0)^{-1}
\earyst
then for all $\varrho > \alpha, \delta > 0$, there exists $\eta > 0, C_2 = C_2(\alpha, M, C_0, C_1, \varrho, \delta) > 0, t_0 = t_0(\alpha, M, C_0, C_1, \varrho, \delta) > 0$ such that
for all $z \in V_{\delta,\varrho}$ and $t \in (-t_0, t_0)$, we have that
\aryst
\norm{(z-\cal L_t)^{-1} - R_s(t)}_{\cal B^s \to \cal B^{0}} \leq C_2 |t|^{s-1+\eta}
\earyst
\end{thm}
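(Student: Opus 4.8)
This is stated verbatim as Theorem~8.1 of \cite{GoLi}, so in the paper the statement would simply be imported from there; I describe below the plan one would follow to prove it. The argument has two stages. The first is a Keller--Liverani-type \emph{basic perturbation estimate}: starting from the uniform Lasota--Yorke bounds in hypotheses (1) and (2) and the smallness $\norm{\Delta_1(t)}_{\cB^1 \to \cB^0} \leq C_1|t|$ (hypothesis (4) with $j=1$, since $\Delta_1(t) = \cL_t - \cL_0$), I would show that for each $\varrho > \alpha$ and $\delta > 0$ there are $t_0 > 0$ and $\eta > 0$ such that, for every $z \in V_{\delta,\varrho}$ and $|t| < t_0$, the resolvent $(z-\cL_t)^{-1}$ is well defined and bounded on $\cB^1$ uniformly in $z$ and $t$, and moreover
\[
\norm{(z-\cL_t)^{-1} - (z-\cL_0)^{-1}}_{\cB^1 \to \cB^0} \leq C|t|^{\eta}.
\]
Here it is essential to work on $\cB^1$ and not on $\cB^0$: hypothesis (1) only bounds $\norm{\cL_t^n}_{\cB^0}$ by $C_0 M^n$, which would confine us to $|z| > M$, whereas the spectral gap supplied by (2) puts the essential spectral radius of $\cL_t$ on $\cB^1$ below $\alpha$, so that $|z|$ can be taken down to $\varrho > \alpha$; the role of the definition of $V_{\delta,\varrho}$, excluding $\delta$-neighbourhoods of $Sp(\cL_0 : \cB^k \to \cB^k)$ for all $k \leq s$ at once, is to keep $z$ in the resolvent set of $\cL_0$ on every space along the chain.

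The second stage is a Duhamel (telescoping) expansion. I would iterate the resolvent identity $(z-\cL_t)^{-1} = (z-\cL_0)^{-1} + (z-\cL_0)^{-1}(\cL_t - \cL_0)(z-\cL_t)^{-1}$, substituting at each step $\cL_t - \cL_0 = \sum_{k=1}^{j-1} t^{k} Q_k + \Delta_j(t)$ for the appropriate $j$, so as to rewrite $(z-\cL_t)^{-1}$ as a finite sum of composites of the form $(z-\cL_0)^{-1} B_1 (z-\cL_0)^{-1}\cdots B_m (z-\cL_0)^{-1}$, together with one terminal composite that still contains $(z-\cL_t)^{-1}$, where each $B_i$ is either a monomial $t^{l}Q_l$ or a $\Delta_l(t)$. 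Collecting precisely the composites built only from the $t^{l}Q_l$ and of total $t$-degree at most $s-1$ reproduces $R_s(t)$, so the theorem reduces to estimating the remaining composites by $C_2|t|^{s-1+\eta}$ in the $\cB^s \to \cB^0$ norm.

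For that remainder I would run the regularity bookkeeping: a factor $Q_l$ (hypothesis (3)) or $\Delta_l(t)$ (hypothesis (4)) sends $\cB^i$ into $\cB^{i-l}$ at the price of $l$ units of smoothness while carrying a factor $t^{l}$ or an estimate $O(|t|^{l})$, and the intervening $(z-\cL_0)^{-1}$'s are uniformly bounded on each $\cB^k$ for $z \in V_{\delta,\varrho}$; since the total smoothness budget on the way from $\cB^s$ to $\cB^0$ is $s$, the surviving composites are forced to have $t$-order at least $s$ unless they are the terminal one, which, arranged to reach $(z-\cL_t)^{-1}$ on $\cB^1$ with a prefactor of $t$-order $s-1$ and smoothness cost $s-1$, is handled by inserting the Hölder-continuity estimate $\norm{(z-\cL_t)^{-1} - (z-\cL_0)^{-1}}_{\cB^1 \to \cB^0} \leq C|t|^{\eta}$ from the first stage, yielding the stated $|t|^{s-1+\eta}$; all constants are uniform over $V_{\delta,\varrho}$ because every resolvent appearing acts on a space on which it has a uniform bound there. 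The main difficulty is the first stage: obtaining the uniform resolvent bound and the Hölder continuity on the small space for all $z$ with $|z| \geq \varrho$, rather than only for $|z| > M$, requires the full Keller--Liverani machinery --- a truncated Neumann series together with a Hennion/Nussbaum spectral-radius argument and continuity of finitely many peripheral eigenprojections --- and one must verify carefully that $V_{\delta,\varrho}$ stays off the spectrum of $\cL_0$ on each of $\cB^0,\dots,\cB^s$ simultaneously.
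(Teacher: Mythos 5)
The paper does not prove this statement: it cites it directly as Theorem~8.1 of \cite{GoLi} (``The following theorem in proved in \cite{GoLi}''), so you are right that it is simply imported. Your two-stage sketch --- a Keller--Liverani uniform resolvent bound and H\"older estimate on the small space, followed by a Duhamel/telescoping expansion with the Taylor decomposition $\cL_t - \cL_0 = \sum_k t^k Q_k + \Delta_j(t)$ and a smoothness-budget count along the chain $\cB^s \supset \cdots \supset \cB^0$ --- is a faithful outline of the actual Gou\"ezel--Liverani argument, so there is no discrepancy with the paper to report.
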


Let $r \geq r'+2 \geq 3$. Given any $C^{r'+1}$ family in $C^r(\T^2)$, denoted by $\{f_t\}_{t \in (-1,1)}$.  For any $t \in (-1,1)$, we denote $\PF_t = \PF_{f_t}$.

By Taylor's formula, for each $u \in C^{r}(\T^2)$, for all $1 \leq k \leq r'+1$, we have
\ary \label{PartII2taylor 1}
\PF_t u = \sum_{j=0}^{k-1} \frac{1}{j!} \partial_t^{j}\PF_tu|_{t=0} + \int_{0}^t dt_1 \cdots \int_{0}^{t_{k-1}} dt_{k} (\partial_t^{k} \PF_t u)(t_{k}) 
\eary

For any $k \geq 1$, any $\alpha = (\alpha_1, \cdots, \alpha_k) \in \{1,2\}^k$, we denote $|\alpha| = k$ and define by $\partial^{\alpha}$ the linear operator from $C^{\infty}(\T^2)$ to $C^{\infty}(\T^2)$ that
\ary \label{PartII2defpartial}
\partial^{\alpha} u = \partial_{\alpha_k} \cdots \partial_{\alpha_1}u
\eary

Then there exist for each $1 \leq k \leq r'+1$, functions $J_0(k,t,x)$ which are $C^{0}$ in $t$ and $C^r$ in $x$, and for each multi-index $\alpha, 1 \leq |\alpha| \leq k$, functions  $J_{\alpha}(k,t,x)$ which are $C^{|\alpha|}$ in $t$ and $C^{r-1}$ in $x$, such that for all $ t_0 \in (-1,1)$
\ary 
\nonumber 
\partial_{t}^{k}\PF_{t} u (x)|_{t=t_0}&=& J_0(k, t_0, x)(\PF_{t_0} u)(x)+\sum_{j=1}^{k} \sum_{|\alpha| = j} J_{\alpha}(k, t_0, x)(\PF_t \partial^{\alpha}u)(x) |_{t=t_0} \\
& =&: k! Q_{k, t_0}u(x), \label{PartII2taylor 2} 
\eary
Moreover, for $1 \leq k \leq r'+1, 1 \leq j \leq k, \alpha \in \{1, 2\}^j$, we have
\ary \label{PartII2bound for j}
\sup_{t \in (-1,1)}\norm{J_{\alpha}(k,t,\cdot)}_{C^{r-1}(\T^2)} \leq C(\norm{\{f_t\}_{t \in (-1,1)}}_{r'+1,r})
\eary
and
\ary \label{PartII2bound for j0}
\sup_{t \in (-1,1)}\norm{J_{0}(k,t,\cdot)}_{C^{r-1}(\T^2)} \leq C(\norm{\{f_t\}_{t \in (-1,1)}}_{r'+1,r})
\eary

We need the following lemma.
\begin{lemma} \label{PartII2lem partial is bounded}
Let $\Theta, \Theta'$ be two polarisations such that $\Theta < \Theta'$, and let $p,q \in \N$, $q \leq p$.
For any $k \geq 0$, for any multi-index $\alpha \in \{1,2\}^k$ ( when $k=0$, we set $\alpha = \emptyset$ and $\partial^{\alpha} = 1$ ),  for any $J \in C^{p+k}(\T^2)$, $J\partial^{\alpha}$ is a bounded operator from $W_{\Theta', p+ k, q+ k}(\T^2)$ to $W_{\Theta, p, q}(\T^2)$ with norm bounded by $C = C(\Theta, \Theta', p, q, k, \norm{J}_{C^{p+k}})$.
\end{lemma}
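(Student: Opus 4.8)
The plan is to reduce to the local model on $R$ and then to factor the operator as $J\partial^{\alpha}=(J\,\cdot\,)\circ\partial^{\alpha}$, estimating the two factors separately. The reduction to charts is the routine partition-of-unity argument of \cite{Ts, BaTs}: writing $u=\sum_{b}\rho_{b}u$ one expands $\rho_{a}J\partial^{\alpha}u=\sum_{b}\rho_{a}J\partial^{\alpha}(\rho_{b}u)$, keeps only the uniformly bounded number of pairs $(a,b)$ with overlapping supports, and uses that the transition maps $\kappa_{a}^{-1}\kappa_{b}$ are translations, which commute with $\partial^{\alpha}$ and act isometrically on every $W_{\Theta,p,q}$ (their Fourier symbol has modulus one and preserves $|\zeta|$ and $\zeta/|\zeta|$); the commutators $[\partial^{\alpha},\rho_{b}]$ produced by Leibniz only lower the order and are absorbed. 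So it is enough to bound $J\partial^{\alpha}\colon W_{\Theta',p+k,q+k}(R)\to W_{\Theta,p,q}(R)$ for $J\in C^{p+k}$.

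\textbf{The factor $\partial^{\alpha}$.} Each $\partial_{i}$ is a Fourier multiplier, hence commutes with every $\psi_{\Theta',n,\sigma}(D)$; on the $n$-th dyadic shell, where the frequencies supporting $\psi_{\Theta',n,\sigma}$ satisfy $|\zeta|\le C2^{n}$, it multiplies $L^{2}$-norms by at most $C2^{n}$. Iterating $k$ times yields $\|\partial^{\alpha}v\|^{\sigma}_{\Theta',c}\le C(k)\,\|v\|^{\sigma}_{\Theta',c+k}$ for $\sigma\in\{+,-\}$, hence $\partial^{\alpha}\colon W_{\Theta',p+k,q+k}(R)\to W_{\Theta',p,q}(R)$ with norm $\le C(k,p,q)$; the polarisation is unchanged because differentiation does not move frequency support.

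\textbf{The factor $J\,\cdot\,$ (the crux).} It remains to bound $J\,\cdot\,\colon W_{\Theta',p,q}(R)\to W_{\Theta,p,q}(R)$ for $J\in C^{p}$ (we have $C^{p+k}$ to spare), with norm controlled by $\|J\|_{C^{p}}$. When $q\ge1$ this is the $I=1$ instance of Lemma \ref{PartII2lem norm unit partition}: after an affine normalisation $J=c_{1}+c_{2}g$ with $g\colon\R^{2}\to[0,1]$ of class $C^{p}$, that lemma (applied with the roles of $\Theta$ and $\Theta'$ interchanged, so that its hypothesis becomes our $\Theta<\Theta'$) gives $\|gu\|_{\Theta,p,q}\le C\|u\|_{\Theta',p,q}+C'(g)\|u\|_{\Theta',p-1,q-1}$; the remainder is harmless since $\|u\|_{\Theta',p-1,q-1}\le\|u\|_{\Theta',p,q}$, and the term $c_{1}u$ is handled by $\|u\|_{\Theta,p,q}\le\|u\|_{\Theta',p,q}$ (Lemma \ref{PartII2lem norm basics}(2)). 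The cone room $\Theta<\Theta'$ is exactly what allows the slight frequency spreading caused by multiplication to be reabsorbed. If the case $q=0$ is also required one argues directly: the $-$-part of the norm is comparable to an $L^{2}$-norm, on which multiplication by $J\in C^{0}$ is trivially bounded; for the $+$-part one splits $u$ into its $\Theta'$-$+$ and $\Theta'$-$-$ frequency pieces, the first being treated as the classical fact that $C^{p}$ multiplies $H^{p}$, and the second — which must be rotated across the angular gap separating $\CC_{+}$ from $\CC'_{-}$, forcing $|\widehat{J}|$ to be evaluated at frequencies of size $\gtrsim 2^{n}$ — being controlled by the polynomial decay of $\widehat{J}$ coming from $J\in C^{p+k}$.

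\textbf{Conclusion and main difficulty.} Composing the two factors (staying at $\Theta'$ through $\partial^{\alpha}$, then passing from $\Theta'$ to $\Theta$ through $J\,\cdot\,$) and reassembling the charts gives the bound with constant $C=C(\Theta,\Theta',p,q,k,\|J\|_{C^{p+k}})$. The one genuinely delicate point is the multiplication estimate and its interplay with the polarisations — tracking how multiplication by a finite-smoothness function moves frequency between the two cones and checking that the loss is absorbed by the cone enlargement $\Theta<\Theta'$ together with the $p+k$ derivatives of $J$ — but for $q\ge1$ this is already packaged in Lemma \ref{PartII2lem norm unit partition}, and in general it is the standard anisotropic-Sobolev bookkeeping of \cite{Ts, BaTs}.
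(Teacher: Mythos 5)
Your proposal is correct and reaches the same bound, but it organises the argument differently from the paper. The paper proves the lemma by an explicit induction on $k$: it writes $(\rho_a J\partial^\alpha u)\circ\kappa_a$ as $\partial^\alpha((\rho_a J u)\circ\kappa_a)$ minus Leibniz correction terms $(\tilde\rho u)\circ\kappa_a$, $(\tilde\rho_\beta\partial^\beta u)\circ\kappa_a$ with $|\beta|<k$, handles the main term by the Fourier-multiplier estimate \eqref{PartII2cal f partial alpha} together with the base case $k=0$, and absorbs the corrections via the induction hypothesis. You instead factor $J\partial^\alpha=(J\,\cdot\,)\circ\partial^\alpha$ on chart pieces and estimate the two factors separately, which avoids the induction altogether: if one decomposes $\rho_a J\partial^\alpha u=\sum_b(\rho_a J)\,\partial^\alpha(\rho_b u)$, each summand is multiplication by $(\rho_a J)\circ\kappa_a\in C^{p+k}_0(R)$ (one application of Lemma~\ref{PartII2lem norm unit partition}, consuming the single cone gap $\Theta<\Theta'$) applied to $\partial^\alpha((\rho_b u)\circ\kappa_a)$, which is controlled by the Fourier-multiplier bound at the \emph{same} polarisation. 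The two proofs use identical ingredients — the Parseval estimate $\|\partial^\alpha v\|^\sigma_{\Theta',c}\lesssim\|v\|^\sigma_{\Theta',c+k}$ and the multiplication lemma — and your route is arguably tidier since no Leibniz correction terms or induction hypothesis appear. Two small remarks. First, your parenthetical about commutators $[\partial^\alpha,\rho_b]$ is a slight red herring: in the decomposition $\partial^\alpha u=\sum_b\partial^\alpha(\rho_b u)$ (which uses only linearity and $\sum\rho_b\equiv1$) no commutators arise, precisely because you chose to put $\partial^\alpha$ on the inside; the commutator terms are exactly what the paper's induction is designed to handle, and your factorisation sidesteps them. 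Second, your separate treatment of $q=0$ (where Lemma~\ref{PartII2lem norm unit partition} as stated requires $q\geq1$) is a useful addition that the paper passes over silently; your sketch there is terse but the mechanism — decay of $\widehat J$ across the angular gap — is the correct one and is indeed how this case is handled in the Baladi–Tsujii framework.
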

\begin{proof}
In the following we will consider $\Theta, \Theta', p, q$ to be fixed, so that we will not express the dependence of varies constants on them.

We prove our lemma by induction on $k$. We denote $\alpha = (\alpha_1,\cdots, \alpha_k)$.
For any $u \in C^{\infty}(\T^2)$, $a \in A$, we have
\aryst
\partial^{\alpha}((\rho_a J u) \circ \kappa_a) = \rho_a \circ \kappa_a (J \partial^{\alpha}u )\circ \kappa_a +( \tilde{\rho} u)\circ \kappa_a +  \sum_{\beta, 1 \leq |\beta| \leq k-1} (\tilde{\rho}_{\beta} \partial^{\beta}u) \circ \kappa_a
\earyst
where $\tilde{\rho} \in C^{p}_0(R_a), \tilde{\rho}_{\beta} \in C^{p + |\beta|}_0(R_a), \forall \beta, 1 \leq |\beta| \leq k-1$. Moreover, it is direct to see that for all $\beta, 1 \leq |\beta| \leq k-1$,
\ary\label{PartII2tilderhotilderhobetanorm}
\norm{\tilde{\rho}}_{C^p} \lesssim \norm{\rho_a}_{C^{p+k}}\norm{J}_{C^{p+k}}, \quad \norm{\tilde{\rho}_{\beta}}_{C^{p+|\beta|}} \lesssim \norm{\rho_a}_{C^{p+k}}\norm{J}_{C^{p+k}}, 
\eary

For any $v \in C^{\infty}_0(\R^2)$, any $\zeta = (\zeta_1, \zeta_2) \in \R^2$, we have 
\aryst
\cal F(\partial^{\alpha}v)(\zeta) = (2\pi i)^{k}( \prod_{j=1}^{k} \zeta_{\alpha_j} ) \cal F(v)(\zeta)
\earyst
Then for any $(n,\sigma) \in \N \times \{+,-\}$, for any $\zeta \in supp(\psi_{\Theta, n, \sigma})$, we have
\ary \label{PartII2cal f partial alpha}
|\cal F(\partial^{\alpha}((\rho_a J u)\circ \kappa_a))(\zeta)| \leq C2^{(n+1)k}\cal F((\rho_a J u)\circ \kappa_a)(\zeta)
\eary
By Lemma \ref{PartII2lem norm unit partition}, there exists $C'_1 = C'_1(k, \norm{\tilde{\rho}}_{C^p})$ such that 
\ary \label{PartII2norm tilde rho beta u}
&&\norm{(\tilde{\rho}  u )\circ \kappa_a}_{\Theta, p, q}\leq C'_1 \norm{u}_{\Theta',p,q} 
\eary
We have
\enmt
\item[$(i)$] If $k=0$, then the boundedness of $J$ from $W_{\Theta', p, q}(\T^2)$ to $W_{\Theta, p, q}(\T^2)$ follows from Lemma \ref{PartII2lem norm unit partition}.
\item[$(ii)$] If $k=1$, then the boundedness of $J\partial^{\alpha}$ from $W_{\Theta', p+1, q+1}(\T^2)$ to $W_{\Theta, p, q}(\T^2)$ follows from $(i)$, \eqref{PartII2tilderhotilderhobetanorm},\eqref{PartII2cal f partial alpha}, \eqref{PartII2norm tilde rho beta u} and Parseval's identity.
\eenmt

Otherwise, assume that our lemma is true for $1,\cdots, k-1$. By \eqref{PartII2norm tilde rho beta u} for $(\tilde{\rho}_{\beta}, \partial^{\beta}u)$ in place of $(\tilde{\rho}, u)$ and the induction hypothesis, there exist a constant $C'_2$ depending only on $k-1$ and $\sup_{\beta, 1 \leq |\beta| \leq k-1}\norm{\tilde{\rho}_{\beta}}_{C^{p+|\beta|}}$ such that 
\ary \label{PartII2norm tilde rho beta partial}
\norm{(\tilde{\rho}_{\beta}\partial^{\beta}u) \circ \kappa_a}_{\Theta, p, q} &\leq& C'_2 \norm{u}_{\Theta', p+k, q+k} \nonumber
\eary
Then we verify our lemma for $k$ by \eqref{PartII2tilderhotilderhobetanorm}, \eqref{PartII2cal f partial alpha}, \eqref{PartII2norm tilde rho beta u},  \eqref{PartII2norm tilde rho beta partial} and Parseval's identity. This completes the induction and thus conclude the proof.
\end{proof}

\begin{lemma}\label{PartII2cor partial op is bounded}
Let $r,r', \ell, \gamma_0, \theta, f$ be given by Proposition \ref{PartII2prop diff}. Then there exist $C, M > 0, \alpha \in (0,1)$,  an open neighbourhood of $f$ in $C^{r}(\T^2, \T^2)$ denoted by  $\cU$, and Banach spaces $\cB^0 \supset \cB^1 \supset \cdots \supset \cB^{r'+1}$ satisfy that  for all $2 \leq i \leq r'+1$, $\cB^{i} \subset C^{i-2}(\T^2)$ and for all $1 \leq i \leq r'+1$, the inclusion $\cB^{i} \subset \cB^{i-1}$ is compact.
Moreover, for any $C^{r'+1}$ family in $\cU$, denoted by $\{f_t\}_{t \in (-1,1)}$, there exists constant $C_1 > 0$ depending only on $\norm{\{f_t\}_{t \in (-1,1)}}_{r'+1,r}$, such that $\{\PF_{f_t}\}_{t \in I}$ is  $(\alpha,  M, C, C_1)$  adapted to$\{\cB^i\}_{i=0}^{r'+1}$.
\end{lemma}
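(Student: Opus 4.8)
The goal is to produce the filtration $\cB^0 \supset \cdots \supset \cB^{r'+1}$ together with the verification of the four conditions in the definition of "$(\alpha, M, C, C_1)$ adapted." The natural choice, following Tsujii's scheme and the spaces introduced above, is to set
\[
\cB^{i} = W_{\Theta, p-i, q-i}(\T^2)
\]
for a fixed polarisation $\Theta$ and fixed integers $p,q$ chosen large enough. Concretely, I would pick $q \geq r'+2$ and $p \geq q+3$ with $p < \frac{r}{2}-3$; the hypothesis $r \geq 20$, $1 \leq r' \leq \frac{r}{2}-9$ of Proposition \ref{PartII2prop diff} is exactly what makes such a choice possible (we need $p-(r'+1) \geq q-(r'+1) \geq 1$ and $p < \frac{r}{2}-3$, i.e. roughly $r'+3 \leq q \leq p < \frac r2 - 3$ with a gap of $3$ between $p$ and $q$, which fits inside $[r'+3, \frac r2 - 4]$ when $r' \leq \frac r2 - 9$). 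With this choice, $\cB^{i} \subset C^{i-2}(\T^2)$ for $i \geq 2$ follows from Lemma \ref{PartII2lem norm basics}(1) (since the smoothness index of $\cB^i$ is $q-i \geq 2 + (r'+1-i) \geq 2$ when $i \le r'-1$, and one checks the remaining small cases directly), and the compactness of each inclusion $\cB^i \subset \cB^{i-1}$ is Lemma \ref{PartII2lem norm basics}(3).

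**Verification of the four conditions.** Condition (1), the uniform bound $\norm{\PF_{f_t}^n u}_{\cB^0} \leq C M^n \norm{u}_{\cB^0}$, and condition (2), the quasi-compactness estimate $\norm{\PF_{f_t}^n u}_{\cB^1} \leq C\alpha^n \norm{u}_{\cB^1} + C M^n \norm{u}_{\cB^0}$, are both instances of Corollary \ref{PartII2lasotayorkefinalform} applied with $(p,q)$ replaced by $(p,q)$ for $\cB^0$ and by $(p-1,q-1)$ for $\cB^1$: indeed $\cB^0 = W_{\Theta,p,q}$, $\cB^1 = W_{\Theta, p-1,q-1}$, and the "lower" space in the Lasota–Yorke inequality for $\cB^1$ is $W_{\Theta', p-2, q-2}$, which embeds into $\cB^0$... wait — one must be slightly careful: the corollary's error term lives in $W_{\Theta', p-2, q-2}$, which is \emph{larger} than $\cB^0 = W_{\Theta, p, q}$, so one should instead run the corollary with the pair $(p-1, q-1)$ and absorb the error using $\norm{u}_{\Theta', p-2, q-2} \le \norm{u}_{\Theta, p-1, q-1} = \norm u_{\cB^1}$ — this is fine for the quasicompact bound in $\cB^1$ once we know the constant $\alpha = \tilde m^{1/2}$ can be taken in $(0,1)$. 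That $\alpha < 1$ is where the transversality hypothesis enters: $m_0(f,h,p,q) = \max(e^{(2q+3)h}m(f), e^{(2p+3)h}\ell^{-(2p-1)})$, and since $m(f) < 1$ and $\ell^{-(2p-1)} < 1$, by choosing $h$ small enough (shrinking $N_0$, i.e. shrinking the neighbourhood $\cU$ accordingly) we get $m_0 < 1$, hence some $\tilde m \in (m_0, 1)$, hence $\alpha = \tilde m^{1/2} < 1$; uniformity over $\cU$ is built into $\cU^{h,N_0}$. The uniform-in-$t$ aspect of (1) and (2) follows because all these estimates hold uniformly over $\cU^{h,N_0}$ and we take $\cU \subset \cU^{h,N_0}$.

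**The operators $Q_k$ and the Taylor remainders.** Conditions (3) and (4) concern the operators $Q_k = Q_{k,0}$ defined via \eqref{PartII2taylor 2} and the remainder operators $\Delta_j(t)$. The boundedness $\norm{Q_k}_{\cB^i \to \cB^{i-k}} \leq C_1$ is precisely where Lemma \ref{PartII2lem partial is bounded} does the work: from \eqref{PartII2taylor 2}, $k! Q_{k,0}u = J_0(k,0,\cdot)\PF_0 u + \sum_{j=1}^k \sum_{|\alpha|=j} J_\alpha(k,0,\cdot)\PF_0 \partial^\alpha u$; each term $J_\alpha \partial^\alpha$ with $|\alpha| = j$ maps $W_{\Theta'', p', q'} \to W_{\Theta, p'-j, q'-j}$ boundedly by Lemma \ref{PartII2lem partial is bounded} (using \eqref{PartII2bound for j}, \eqref{PartII2bound for j0} to bound $\norm{J_\alpha}_{C^{r-1}}$ in terms of $\norm{\{f_t\}}_{r'+1,r}$, which is where the constant $C_1$ picks up its dependence), then $\PF_0$ maps $\cB^{i-j} \to \cB^{i-j}$ boundedly by the first inequality of Corollary \ref{PartII2lasotayorkefinalform} — wait, the order is: first $\partial^\alpha$ then $\PF_0$, so one uses $\PF_0 : W_{\Theta, p-i+j, q-i+j} \to W_{\Theta, p-i+j, q-i+j}$ (costing $j$ derivatives from $\cB^i = W_{\Theta,p-i,q-i}$ to the space $W_{\Theta, p-i, q-i}$ with $j$ extra derivatives spent, landing finally in $\cB^{i-j}$). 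For condition (4), write $\Delta_j(t) = \PF_t - \PF_0 - \sum_{k=1}^{j-1}t^k Q_k$, which by \eqref{PartII2taylor 1} with the chosen $k = j$ equals the $j$-fold iterated integral $\int_0^t\!\cdots\!\int_0^{t_{j-1}} (\partial_t^j \PF_t u)(t_j)\, dt_j\cdots dt_1$; each $\partial_t^j\PF_{t_j}$ is, by \eqref{PartII2taylor 2}, a sum of terms $J_\alpha(j, t_j, \cdot)\PF_{t_j}\partial^\alpha$ with $|\alpha| \le j$, so Lemma \ref{PartII2lem partial is bounded} plus the boundedness of $\PF_{t_j}$ on each space (Corollary \ref{PartII2lasotayorkefinalform}, uniformly over $\cU$) gives $\norm{\partial_t^j\PF_{t_j}}_{\cB^i \to \cB^{i-j}} \leq C_1$ uniformly in $t_j$, and integrating $j$ times yields the factor $|t|^j$. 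The main obstacle is bookkeeping rather than conceptual: one has to be scrupulous that at every stage the regularity indices $(p - \cdot\,, q - \cdot\,)$ stay within the admissible window $r'+3 \le q \le p < \frac r2 - 3$ where Corollary \ref{PartII2lasotayorkefinalform} and Lemma \ref{PartII2lem partial is bounded} both apply — this is the true role of the numerical hypotheses $r \geq 20$, $r' \leq \frac r2 - 9$ — and that all constants depend only on $\norm{\{f_t\}}_{r'+1,r}$ and on the fixed data $(\Theta, p, q, h, N_0)$, hence are uniform over the $C^{r'+1}$ family and over $\cU$.
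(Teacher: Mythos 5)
Your choice of filtration $\cB^i = W_{\Theta, p-i, q-i}$ with a \emph{single} polarisation $\Theta$ and \emph{decreasing} indices runs the filtration the wrong way and also misses a structural feature that the chain of polarisations in the paper is there to handle. Both issues are genuine.

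\textbf{The inclusion direction is reversed.} You set $\cB^0 = W_{\Theta,p,q}$ and $\cB^1 = W_{\Theta,p-1,q-1}$; since higher Sobolev indices give smaller spaces, this yields $\cB^0 \subsetneq \cB^1$, whereas the Gou\"ezel--Liverani scheme requires $\cB^0 \supset \cB^1 \supset \cdots$: the \emph{weak} norm of the Lasota--Yorke inequality must sit in the \emph{larger} space. You noticed that the error term of Corollary \ref{PartII2lasotayorkefinalform} lives in $W_{\Theta', p-2, q-2}$, which is larger than your $\cB^0$, and proposed to ``absorb'' it via $\norm{u}_{\Theta', p-2, q-2} \le \norm{u}_{\Theta, p-1, q-1} = \norm{u}_{\cB^1}$. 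But doing so turns the inequality into $\norm{\PF^n u}_{\cB^1} \lesssim (\alpha^n + M^n)\norm{u}_{\cB^1}$, which carries no quasi-compactness information at all: the whole point of the Lasota--Yorke estimate is that the $M^n$ term involves a norm that is strictly weaker and compactly included. The paper's filtration $\cB^k = W_{\Theta_k,\, \lfloor r/2\rfloor - r' - 5 + k,\, k}$ has indices \emph{increasing} in $k$, so that $\cB^k \subset \cB^{k-1}$ compactly and the error term of the Lasota--Yorke inequality at level $k$ lands exactly in $\cB^{k-1}$.

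\textbf{A single polarisation does not suffice.} The verification of conditions (3)--(4) in the definition of ``adapted'' rests on Lemma \ref{PartII2lem partial is bounded}, which (like Lemma \ref{PartII2lem norm unit partition} from which it is built) requires two \emph{strictly nested} polarisations $\Theta < \Theta'$: the operator $J\partial^\alpha$ is bounded from $W_{\Theta', p+k, q+k}$ to $W_{\Theta, p, q}$, i.e.\ it loses a polarisation on the way down. With a single $\Theta$, the lemma gives you boundedness into $W_{\Theta_0, p, q}$ for some $\Theta_0 < \Theta$, which is strictly larger than $W_{\Theta, p, q}$; since $Q_k$ has to map $\cB^i$ into $\cB^{i-k}$ and not into a strictly weaker space, this is not enough. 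This is exactly why the paper introduces the chain $\check\Theta < \Theta_0 < \Theta_1 < \cdots < \Theta_{r'+1} < \hat\Theta$ and builds each $\cB^k$ on its own polarisation $\Theta_k$: each derivative lost by $Q_k$ or by the Taylor remainder $\Delta_j(t)$ drops the polarisation index in lockstep with the Sobolev indices, so the composite lands in $\cB^{i-j}$ on the nose. A correct write-up along your lines would have to re-index the filtration (increasing Sobolev indices) \emph{and} thread in a strictly increasing family of polarisations; once you do both, you essentially reproduce the paper's construction, including the need for $q_0 = 0$ in order to get $\cB^i \subset C^{i-2}$ all the way up to $i = r'+1$ (your $q \geq r'+2$ is not enough for this with decreasing indices, since you would need $q - i \geq i$).

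The remaining parts of your plan --- using Corollary \ref{PartII2lasotayorkefinalform} for conditions (1)--(2), tuning $h$ so that $m_0 < 1$ and hence $\alpha < 1$, and using \eqref{PartII2taylor 1}--\eqref{PartII2bound for j0} together with Lemma \ref{PartII2lem partial is bounded} for (3)--(4) --- match the paper's strategy and are otherwise sound in outline, provided the two issues above are corrected.
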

\begin{proof} 
Since $m(f) < 1$, by \eqref{PartII2m is less that its finite scale approx}, there exist $\overline{m} < 1$ and a $C^r$ open neighbourhood of $f$ in $C^r(\T^2,\T^2)$, denoted by $\cU_1$ such that $m(f') < \overline{m}$ for all $f' \in \cU_1$.

We choose $\check{\Theta} = (\check{\CC}_{\pm}, \check{\varphi}_{\pm}), \hat{\Theta} = (\hat{\CC}_{\pm}, \hat{\varphi}_{\pm})$ such that $\check{\CC} < \hat{\CC}$ and satisfy \eqref{PartII2check cc cc hat cc relation 2}, \eqref{PartII2check cc cc hat cc relation 3} for $f$ and $\CC_0 = \CC(\theta)$, and $\R(0,1)$ is contained in the interior of $\hat{\CC}_{-}$ except for at the origin. 
Then there exists an open neighbourhood of $f$ in $\cU_1$, denoted by $\cU_2$, such that  properties  \eqref{PartII2check cc cc hat cc relation 2}, \eqref{PartII2check cc cc hat cc relation 3} are satisfied for any $f' \in \cU_2$ in place of $f$.
Then fix a sequence of polarisations $\{\Theta_k = (\CC_{k,\pm}, \varphi_{k, \pm} ) \}_{k=0}^{r'+1}$ such that
\aryst
\check{\Theta} < \Theta_0 < \Theta_1 < \cdots < \Theta_{r'} < \Theta_{r'+1}  < \hat{\Theta}
\earyst
and define for $0 \leq k \leq r'+1$ that 
\ary \label{PartII2give cB k}
\cB^{k} = W_{\Theta_k, \lfloor \frac{r}{2} \rfloor -r'-5+k, k}
\eary
By Lemma \ref{PartII2lem norm basics}, we have $\cB^{i} \subset C^{i-2}(\T^2)$ for all $2 \leq i \leq r'+1$, and the inclusion $\cB^{i} \subset \cB^{i-1}$ is compact for all $1 \leq i \leq r'+1$.

For any integer $1 \leq k \leq r'+1$, we denote 
\aryst
\alpha_k = \max(e^{(2k+3)h}\overline{m}, e^{(r-2r'+2k-8)h} \ell^{-(r-2r'+2k-12)} )
\earyst
Then we have $\alpha_k > m_0(f', h, \lfloor \frac{r}{2} \rfloor -r'-5+k, k)$ for all $f' \in \cU_2$, where $m_0$ is defined in \eqref{PartII2essential spectrum radius}.

By $r' \leq \frac{r}{2}-9$, we have for any $0 \leq k \leq r'+1$, $(p,q) := (\lfloor \frac{r}{2} \rfloor -r'-5+k, k)$ that
\aryst
p \geq q+3, \quad p,q \in [0, \frac{r}{2}- 3)
\earyst
We take an arbitrary 
\aryst
h \in (0, \min( \frac{r-2r'-10}{r-2r'-6}\log \ell, \frac{-\log \overline{m}}{2r'+5}))
\earyst
It is direct to verify that 
\aryst
\alpha_0 :=\sup_{1 \leq k \leq r'+1} \alpha_k = \max(e^{(2r'+5)h}\overline{m}, e^{(r-2r'-6)h}\ell^{-(r-2r'-10)}) < 1
\earyst
We let $N_0 > 0$ be sufficiently large so that $\cU^{h,N_0}$ contains a $C^r$ open neighbourhood of $f$, denoted by $\cU_3$. We assume that $\cU_3 \subset \cU_2$.

 Take any $\alpha \in (\alpha_0^{\frac{1}{2}},1)$. We can apply Corollary \ref{PartII2lasotayorkefinalform} to see that there exist $C, M > 1$ such that for any $f' \in \cU_3$, any $u \in C^{\infty}(\T^2)$, any $n \geq 1$ that
 \aryst
 \norm{\PF_{f'}u}_{\cB^0} &\leq& M\norm{u}_{\cB^0}, \\
 \norm{\PF^n_{f'}u}_{\cB^k} &\leq& C\alpha^n\norm{u}_{\cB^k} +  CM^{n}\norm{u}_{\cB^{k-1}}, \quad 1 \leq k \leq r'+1
 \earyst

Given any  $C^{r'+1}$ family in $\cU_3$ denoted by $\{f_t\}_{t \in (-1,1)}$. Let $Q_{1,t},\cdots, Q_{r'+1,t}$ be defined by \eqref{PartII2taylor 2}. We then let 
\aryst
Q_j := Q_{j,0}, \quad \forall j=1,\cdots, r'
\earyst
We let $\Delta_0(t) = \PF_t$ for all $t \in (-1,1)$.
By \eqref{PartII2taylor 1}, for any $1 \leq j \leq r'+1$,
\ary \label{PartII2def delta for f t}
\Delta_{j}(t) = \int_{0}^t dt_1 \cdots \int_{0}^{t_{k-1}} dt_{k} (\partial_t^{k} \PF_t u)(t_{k})   \quad \forall t \in (-1,1)
\eary
Then by Lemma \ref{PartII2lem partial is bounded}, \eqref{PartII2def delta for f t}, \eqref{PartII2taylor 2}, \eqref{PartII2bound for j}, \eqref{PartII2bound for j0} there exists $C_1$ depending only on  $\norm{\{f_t\}_{t \in (-1,1)}}_{r'+1,r}$, such that
\aryst
\norm{Q_j}_{\cB^{i} \to \cB^{i-j}} &\leq& C_1, \forall j=1,\cdots, r', i=j,\cdots, r'+1  \\
\norm{\Delta_{j}(t)}_{\cB^{i} \to \cB^{i-j}} &\leq& C_1|t|^{j}, \forall t \in (-1,1), j=0,\cdots, r'+1, i=j,\cdots, r'+1
\earyst
This concludes the proof.
 
\end{proof}

Now we can prove Proposition \ref{PartII2prop diff}.
\begin{proof}[Proof of Proposition \ref{PartII2prop diff}:]
Let $f$ be given by Proposition \ref{PartII2prop diff}.
We let $\cB^0, \cdots, \cB^{r'+1}$ and $ \cU, C,M,\alpha$ be given by Lemma \ref{PartII2cor partial op is bounded}. 
Then for any $f \in \cU$, $\PF_f$ extends to bounded operator from $\cB^i$ to $\cB^i$ for all $0 \leq i \leq r'+1$. 

For any $f'$ in an open neighbourhood of $f$ in $C^{r}(\T^2, \T^2)$, we denote $s(f,f') = d_{C^r}(f,f')^{\frac{1}{r}}$, and  define $\{F^{f'}_{t}\}_{t \in (-1,1)}$, a $C^{r}$ family in $\cU$ by
\aryst
F^{f'}_{t}(x,y) = (1- \Pi(\frac{t}{s(f,f')}))f(x,y) + \Pi(\frac{t}{s(f,f')}) f'(x,y)
\earyst
where $\Pi \in C^{\infty}( \R , [0,1])$ such that 
$
\Pi(t) = \begin{cases} 0, \quad t < \frac{1}{3}, \\ 1, \quad t > \frac{2}{3}. \end{cases}
$, and we assume that $d_{C^r}(f,f') \ll 1$ so that the addition, the right hand side is interpreted as the linear interpolation between two nearby points $f(x,y), f'(x,y) \in \T^2$.
Then $\{F^{f'}_{t}\}_{t \in (-1,1)}$ constructed above satisfies that $F^{f'}_0 = f$,  $F^{f'}_{s(f,f')} = f'$, and$\norm{\{F^{f'}_t\}_{t \in (-1,1)}}_{r'+1,r} < C(f,\Pi)$. Then by Lemma \ref{PartII2cor partial op is bounded}, there exists a constant $C_1 > 0$ depending only on $f$ and $\Pi$ such that for any $f'$ sufficiently close to $f$ in $C^r$, $\{\PF_{F^{f'}_{t}}\}_{t \in (-1,1)}$ is $(\alpha, M, C, C_1)$ adapted to $\{\cB^{i}\}_{0 \leq i \leq r'+1}$.

By Lemma \ref{PartII2cor partial op is bounded} and Hennion's theorem in \cite{HeHe}, for $1 \leq i \leq r'+1$, $Sp(\PF_f : \cB^i \to \cB^i ) \bigcap \{z | |z| > \alpha \}$ contains isolated eigenvalues of finite multiplicity. By \eqref{PartII2eq pf operator}, $1-\PF_f$ is non-invertible in $\cB^i$ for all $1\leq i \leq r'+1$. Thus $1$ is an eigenvalue of $\PF_f$ with finite multiplicity in $\cB^i$. By our hypothesis that $f$ is ergodic with respect to the Lebesgue measure on $\T^2$, we have that for all $1 \leq i \leq r'+1$, $ Ker( (1 - \PF_f) : \cB^i \to \cB^i) = \R u$ for function $u \equiv 1 \in C^{\infty}(\T^2)$.

Let $\kappa > 0$ be a constant such that $1$ is the only eigenvalue of $\PF_f : \cB^i \to \cB^i$ in $\overline{B(1,\kappa)}$ for all $1 \leq i \leq r'+1$. By Theorem \ref{PartII2thm goli perturbations}, for all $1 \leq i \leq r'+1$, for all $z \in \partial B(1,\kappa)$, we have
\aryst
\sup_{f', d_{C^{r}}(f,f') \leq \epsilon}\norm{(z - \PF_{F^{f'}_{s(f,f')}})^{-1} - (z-\PF_f)^{-1} }_{\cB^{i} \to \cB^0} \to 0 \quad \mbox{ when } \epsilon\to 0
\earyst
Moreover, this convergence is uniform for all $z \in \partial B(1,\kappa)$. By Lemma \ref{PartII2cor partial op is bounded}, the inclusion $\cB^{i} \subset \cB^{i-1}$ is compact for all $1 \leq i \leq r'+1$. Then using the by-now standard argument in \cite{KeLi}, we see that  there exists $\delta > 0$ such that for all $f'$ such that $d_{C^r}(f,f') < \delta$, $\PF_{f'}$ has a unique simple eigenvalue in $\overline{B(1, \kappa)}$.

We now show that any $f'$ sufficiently close to $f$ has a unique SRB measure. 
We define for $f'$ sufficiently close to $f$ the spectral projection at $1$ by $\Pi_{f'}$. Then we have
\ary \label{PartII2spectral projection}
\Pi_{f'} = \frac{1}{2\pi i} \int_{|z-1|=\kappa} ( z - \PF_{f'})^{-1} dz
\eary
Moreover, denote $\rho_{f'} := \Pi_{f'}1 \in \cB^2$, then it is standard to see that $\rho_{f'} dLeb$ is $f'-$invariant. 

For all $f'$ sufficiently close to $f$ in $C^r$,  $\{\PF_{\cal F^{f'}_t}\}_{t \in (-1,1)}$ is $(\alpha, 2M, C, C_1)$ adapted to $\{\cB^{2}, \cB^3\}$.
Then by Theorem \ref{PartII2thm goli perturbations}, we have for all $z, |z-1|=\kappa$ that
\aryst
\norm{(z-\PF_{f'})^{-1} - (z-\PF_{f})^{-1}}_{\cB^3 \to \cB^2} \leq C_2 d_{C^{r}}(f,f')^{\frac{1}{r}}
\earyst
By Lemma \ref{PartII2lem norm basics} and \eqref{PartII2spectral projection}, we have
\aryst
\lim_{f', d_{C^r}(f' , f) \to 0 } \norm{\Pi_{f'}1 - \Pi_{f}1}_{C^0} \lesssim \lim_{f', d_{C^r}(f' , f) \to 0 } \norm{\Pi_{f'}1 - \Pi_{f}1}_{\cB^2} = 0
\earyst
While it is clear that $\Pi_{f}1 = 1$. This shows that for $f'$ sufficiently close to $f$ in $C^r(\T^2, \T^2)$, $\rho_{f'}(z) \geq \frac{1}{2}$ for all $z \in \T^2$. Then $\rho_{f'}dLeb$ is necessarily the unique SRB measure of $f'$. Let $\cU$ be a sufficiently small $C^r$ open neighbourhood of $f$ satisfying all the above conditions for $f'$.

Given a $C^{r'+1}$ family in $\cU$ denoted by $\{f_t\}_{t \in (-1,1)}$.  For any $\varphi \in C^{r}(\T^2)$, any $t \in (-1,1)$, we have
\aryst
\int \varphi d\mu_t = (\Pi_{f_t} 1, \varphi)_{L^2} = \frac{1}{2\pi i} \int_{|z-1| = \kappa} (( z - \PF_{f_t})^{-1}1, \varphi )_{L^2} dz
\earyst
Then our proposition follows from Lemma \ref{PartII2cor partial op is bounded} and Theorem \ref{PartII2thm goli perturbations}.
\end{proof}

\section{Nondifferentiability of u-Gibbs states}\label{PartII2Nondifferentiability of u-Gibbs states}

Our construction is inspired by a theorem of Halperin in the study of Anderson-Bernoulli model, stated in the Appendix of \cite{SiTa}. The argument in \cite{SiTa} is of spectral nature, and made essential use of the self-adjointness of the Schr\"odinger operators. Our argument is purely dynamical and focused on exploiting monotonicity and periodicity. This proof should shed some light on the study of the regularity of the density of states  of 1D Schr\"odinger operators with strongly mixing potentials.

\subsection{Markov partitions}

In this section, we define for $f$ that is either a partially hyperbolic system, or an Anosov system, or a strictly expanding map, a family of submanifolds that approximate the unstable manifolds of $f$. Note that for our later purpose, we only need to ensure that for any such submanifold, its image after long iterations can be almost decomposed into submanifolds in the same class. This makes our definition much simpler than the ones used in \cite{Do2}.

For any compact Riemannian manifold $X$, any precompact submanifold $\cD \subset X$, we denote by $Vol|_{\cD}$ the normalised volume form on $\cD$ induced by the restriction of the Riemannian metric on $\cD$. The normalisation ensures that $Vol|_{\cD}(\cD)=1$.

Let $f: X \to X$ be either a partially hyperbolic or an Anosov system. We denote by $E^{u}(x)$ the unstable subspace at $x$ of dimension $d_u$, and let $\cal K = \{\cal K(x) | E^{u}(x) \subset \cal K(x) \subset T_xX\}_{x \in X}$ denote a continuous family of cones containing $E^{u}(x)$ such that the closure of $Df(\cal K(x))$ is contained in the interior of $\cal K(f(x))$ except for the origin. 
\begin{defi}
Let $f, \cal K$ be given as above. For any $\varepsilon \in ( 0, 1), C_2 > 0$, we denote by $\bA_{\varepsilon, C_2, \cal K}(f)$ the set of submanifolds $\cD = \Phi((0, \varepsilon)^{d_u})$, where $\Phi : (0,2\varepsilon)^{d_u} \to X$ is a $C^2$ immersion such that $\norm{\Phi^{-1}}_{C^2},\norm{\Phi}_{C^2} < C_2, T\cD(x) \subset \cal K(x), \forall x \in \cD$.
\end{defi}

It is a standard fact that we can choose $\cal K$ such that for all $f'$ sufficiently close to $f$ in $C^1(X,X)$, the closure of $Df'(\cal K(x))$ is contained in the interior of $\cal K(f'(x))$ except for the origin. Moreover, there exists a constant $C_3$ depending only on $\norm{f}_{C^2}$ such that for any $\cD \in \bA_{\varepsilon, C_2, \cal K}(f)$, for any $n \geq 1$, let $\rho$ be the density of $(f^{n})_{*}(Vol|_{\cD})$ with respect to $Vol|_{f^{n}(\cD)}$. Then we have 
\aryst
|\log \rho(y_1) - \log \rho(y_2)| \leq C_3 d_{f^{n}(\cD)}(y_1,y_2), \quad \forall y_1,y_2 \in f^{n}(\cD)
\earyst
As a consequence, we have the following result. The proof is a standard exercise, which we omit.
\begin{lemma}\label{PartII2lem markov partition}
Let $C^2$ map $f : X \to X$ be either a partially hyperbolic system or an Anosov system. Then there exists a continuous family of cones $\cal K = \{ \cal K(x) | E^{u}(x) \subset \cal K(x) \subset T_x X\}_{x \in X}$, a constant $C_2 > 0$  such that the following is true. For any $x \in X$ the closure of $Df(\cal K(x))$ is contained in $\cal K(f(x))$ except for the origin . Moreover, for any $\kappa \in (0,1)$ there exists $\varepsilon_0 = \varepsilon_0(f,\kappa)$ with the following property.
 For any $\varepsilon \in (0,\varepsilon_0)$, there exist $ N_0 = N_0(\varepsilon) > 0$ and a $C^2$ open neighbourhood of $f$, denoted by $\cal U$, such that for any $f' \in \cal U$, any $\cD \in \bA_{\varepsilon, C_2, \cal K}(f)$, any integer $N > N_0$, there exist disjoint $\cD_1, \cdots, \cD_l \in \bA_{\varepsilon, C_2, \cal K}(f)$, constants $c_1, \cdots, c_l > 0$ such that for all $1 \leq i \leq l$, we have $\cD_i \subset f'^{N}(\cD)$, and
\aryst
\sum_{i=1}^{l} c_i Vol|_{\cD_i} \leq (f'^{N})_{*}(Vol|_{\cD}) \mbox{ and } \sum_{i=1}^{l} c_i > 1-\kappa
\earyst
\end{lemma}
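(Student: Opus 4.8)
The plan is the standard Markov-type construction; I outline it in the order I would carry it out, leaving routine estimates aside.

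\emph{Step 1: geometric set-up.} As recalled above, one fixes a continuous cone family $\cal K$ around $E^{u}$ and a $C^{2}$-neighbourhood $\cal U_{0}$ of $f$ on which $\cal K$ is strictly $Df'$-invariant; since $E^{u}$ is uniformly expanded there is $\Lambda>1$ with $\|Df'_{x}v\|\ge\Lambda\|v\|$ for $v\in\cal K(x)$. Using $f\in C^{2}$, the standard invariant ``curvature cone'' argument yields $C_{0}>0$ (after shrinking $\cal U_{0}$) such that $f'$ sends a submanifold tangent to $\cal K$ with second fundamental form $\le C_{0}$ to one again tangent to $\cal K$ with second fundamental form $\le C_{0}$; iterating, $f'^{\,N}(\cD)$ has second fundamental form $\le C_{0}$ for every $N$ and every admissible $\cD$. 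I would then fix $C_{2}$ large, depending only on $C_{0}$, $d_{u}$ and the geometry of $X$, so that: (a) the disks tangent to $\cal K$ with second fundamental form $\le C_{0}$ are precisely those admissible in the definition of $\bA_{\,\cdot\,,C_{2},\cal K}$; and (b) for $\varepsilon$ small, any $\varepsilon$-sized piece of a submanifold with second fundamental form $\le C_{0}$ admits a parametrisation $\Phi:(0,2\varepsilon)^{d_{u}}\to X$ with $\|\Phi\|_{C^{2}},\|\Phi^{-1}\|_{C^{2}}<C_{2}$ (use geodesic normal coordinates of the piece — crucially this keeps the $C^{2}$-norm bounded by a function of $C_{0}$ only, not of $C_{2}$). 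The distortion estimate for $\rho:=d(f'^{\,N})_{*}(Vol|_{\cD})/d\,Vol|_{f'^{\,N}(\cD)}$ from the excerpt, with constant $C_{3}=C_{3}(f)$, is used as a black box.

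\emph{Step 2: the covering.} Given $\kappa$, choose $\varepsilon_{0}=\varepsilon_{0}(f,\kappa)$ with $e^{-C_{3}C_{2}\sqrt{d_{u}}\,\varepsilon_{0}}>1-\kappa/2$ and $C_{4}\varepsilon_{0}<\kappa/2$ (the constant $C_{4}=C_{4}(f)$ appears in Step 3); for $\varepsilon<\varepsilon_{0}$ choose $N_{0}$ (depending on $f,\kappa$) large enough that $f'^{\,N}(\cD)$ is, for $N>N_{0}$, large enough to support the gridding below, and a small $C^{2}$-neighbourhood $\cal U\subset\cal U_{0}$ of $f$. Fix $f'\in\cal U$, $\cD\in\bA_{\varepsilon,C_{2},\cal K}(f)$, $N>N_{0}$. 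By Step 1, $f'^{\,N}(\cD)$ is a (possibly immersed) disk tangent to $\cal K$ with second fundamental form $\le C_{0}$, hence of uniformly bounded intrinsic geometry; cover it by finitely many ``unit-size'' charts of bounded $C^{2}$-norm with bounded overlap, subordinate to a Borel partition $f'^{\,N}(\cD)=\bigsqcup_{\alpha}Z_{\alpha}$ with each $Z_{\alpha}$ well inside one chart and $\sum_{\alpha}\mathrm{vol}_{d_{u}-1}(\partial Z_{\alpha})\le C\,\mathrm{vol}(f'^{\,N}(\cD))$. In the chart containing $Z_{\alpha}$, grid the set of points of $Z_{\alpha}$ at intrinsic distance $\ge2\varepsilon$ from $\partial Z_{\alpha}$ into $\varepsilon$-cubes; the resulting disks $\cD_{1},\dots,\cD_{l}$ are pairwise disjoint (within a chart by the grid, across charts since the $Z_{\alpha}$ are), each lies in $\bA_{\varepsilon,C_{2},\cal K}(f)$ by Step 1, and $\bigcup_{i}\cD_{i}=f'^{\,N}(\cD)\setminus G$, where $G$ is an $O(\varepsilon)$-collar of $\Sigma:=\bigcup_{\alpha}\partial Z_{\alpha}$ (which contains $\partial f'^{\,N}(\cD)$). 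Put $\rho_{i}:=\inf_{\cD_{i}}\rho$ and $c_{i}:=\rho_{i}\,Vol|_{f'^{\,N}(\cD)}(\cD_{i})>0$.

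\emph{Step 3: verification.} Because $\rho\ge\rho_{i}$ on $\cD_{i}$ and, as measures, $\mathbf 1_{\cD_{i}}Vol|_{f'^{\,N}(\cD)}=Vol|_{f'^{\,N}(\cD)}(\cD_{i})\cdot Vol|_{\cD_{i}}$, one gets $c_{i}\,Vol|_{\cD_{i}}=\rho_{i}\,\mathbf 1_{\cD_{i}}Vol|_{f'^{\,N}(\cD)}\le\rho\,\mathbf 1_{\cD_{i}}Vol|_{f'^{\,N}(\cD)}=\mathbf 1_{\cD_{i}}(f'^{\,N})_{*}(Vol|_{\cD})$; summing over the disjoint $\cD_{i}$ gives $\sum_{i}c_{i}\,Vol|_{\cD_{i}}\le(f'^{\,N})_{*}(Vol|_{\cD})$, the first assertion. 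For the mass, $\mathrm{diam}_{f'^{\,N}(\cD)}(\cD_{i})\le C_{2}\sqrt{d_{u}}\,\varepsilon$, so the distortion estimate gives $(f'^{\,N})_{*}(Vol|_{\cD})(\cD_{i})\le e^{C_{3}C_{2}\sqrt{d_{u}}\varepsilon}c_{i}$ and hence $\sum_{i}c_{i}\ge e^{-C_{3}C_{2}\sqrt{d_{u}}\varepsilon}\big(1-(f'^{\,N})_{*}(Vol|_{\cD})(G)\big)$. Finally $(f'^{\,N})_{*}(Vol|_{\cD})(G)=Vol|_{\cD}\big((f'^{\,N})^{-1}G\big)$, and since $f'^{\,N}$ expands the intrinsic metric of $\cD$ by $\ge\Lambda^{N}$, $(f'^{\,N})^{-1}G$ is an $O(\varepsilon\Lambda^{-N})$-collar of $(f'^{\,N})^{-1}\Sigma$; writing its measure as $\int_{G}(\mathrm{Jac}_{\cD}f'^{\,N}\!\circ(f'^{\,N})^{-1})^{-1}\,dVol|_{f'^{\,N}(\cD)}$, partitioning $f'^{\,N}(\cD)$ into unit cubes $Q$, and using unit-scale bounded distortion of $\mathrm{Jac}_{\cD}f'^{\,N}$ together with $\mathrm{vol}_{d_{u}-1}(\Sigma\cap Q)\lesssim\mathrm{vol}(Q)$, one compares it with $\int_{f'^{\,N}(\cD)}(\mathrm{Jac}_{\cD}f'^{\,N}\!\circ(f'^{\,N})^{-1})^{-1}\,dVol|_{f'^{\,N}(\cD)}=1$ and gains a factor $O(\varepsilon)$; thus $(f'^{\,N})_{*}(Vol|_{\cD})(G)\le C_{4}\varepsilon<\kappa/2$, and $\sum_{i}c_{i}>(1-\kappa/2)^{2}>1-\kappa$.

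\emph{Main obstacle.} The genuinely dynamical content is in Step 1 — the invariant curvature cone, which needs $f\in C^{2}$ and bounds the $C^{2}$-geometry of $f'^{\,N}(\cD)$ uniformly in $N$ — and in the estimate $(f'^{\,N})_{*}(Vol|_{\cD})(G)=O(\varepsilon)$ of Step 3: since $\rho$ may oscillate by a factor exponential in $N$, one cannot bound the measure of the uncovered seam by its $Vol|_{f'^{\,N}(\cD)}$-volume, and must instead pull the seam back into $\cD$ before integrating, so that only the unit-scale distortion of $\mathrm{Jac}_{\cD}f'^{\,N}$ enters. Everything else is bookkeeping, which is why the statement is a standard exercise.
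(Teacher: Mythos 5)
The paper gives no proof at all for this lemma --- it is declared ``a standard exercise, which we omit'' --- so there is nothing internal to compare against. Your proof is a correct execution of the intended standard argument: control the $C^2$-geometry of $f'^N(\cD)$ via a curvature cone so that it can be tiled by admissible $\varepsilon$-disks; construct the $\cD_i$ by gridding; set $c_i = \big(\inf_{\cD_i}\rho\big)\,Vol|_{f'^N(\cD)}(\cD_i)$ to get the measure inequality for free; and use the Lipschitz bound on $\log\rho$ together with the observation that the uncovered seam is an $O(\varepsilon)$-collar of a set of \emph{bounded $(d_u-1)$-density per unit volume} to get $\sum c_i > 1-\kappa$. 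You also correctly identify the one place where a naive argument would fail --- estimating the seam's mass by its Lebesgue volume in $f'^N(\cD)$ is hopeless since $\rho$ oscillates exponentially in $N$ --- and fix it by localising to unit-scale pieces, where the $C_3$-distortion bound applies. This is exactly the right idea.

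One piece of your bookkeeping is stated more strongly than it can be true, though the argument it supports goes through. In Step 1 you assert that for the right choice of $C_2$, ``the disks tangent to $\cal K$ with second fundamental form $\le C_0$ are precisely those admissible in the definition of $\bA_{\cdot,C_2,\cal K}$.'' The two classes cannot coincide: a bound $\|\Phi\|_{C^2},\|\Phi^{-1}\|_{C^2}<C_2$ bounds the second fundamental form by some $g(C_2)$, but generically $g(C_2) > C_0$. What you actually need (and what is true) is the two one-sided implications you use: (i) curvature $\le C_0$ implies an $\varepsilon$-sized piece is $C_2$-parametrisable in geodesic normal coordinates, so the gridded $\cD_i$ land in $\bA_{\varepsilon,C_2,\cal K}(f)$; and (ii) $\cD\in\bA_{\varepsilon,C_2,\cal K}(f)$ has curvature $\le g(C_2)$, which after $N_0(\varepsilon)$ iterates is driven below $C_0$ by the strict invariance of the curvature cone, so that (i) applies to $f'^N(\cD)$ for $N>N_0$. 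You invoke ``iterating'' for precisely this reason, so the idea is present; just replace ``precisely'' by the pair of implications above to close the loop cleanly. This also clarifies why $N_0$ must depend on $\varepsilon$ and why $C_2$ can be fixed in advance of $\kappa$ and $\varepsilon$, matching the quantifier order in the statement.

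Two further small remarks on hygiene rather than substance: since $\Phi$ in the definition of $\bA_{\varepsilon,C_2,\cal K}(f)$ is only an immersion, ``$\Phi^{-1}$'' and ``$Vol|_{f'^N(\cD)}$'' are to be read via pullback to the cube $(0,\varepsilon)^{d_u}$, but your chart-by-chart construction respects this; and when you write $\bigcup_i\cD_i=f'^N(\cD)\setminus G$, the open $\varepsilon$-cubes also omit the $(d_u-1)$-dimensional grid interfaces, which is harmless since these are $Vol|_{f'^N(\cD)}$-null (and hence $(f'^N)_*(Vol|_\cD)$-null, as $\rho$ is bounded on each unit piece). Neither affects the conclusion.
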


In the following, for any $f$ that is either a partially hyperbolic system or an Anosov system, we will always choose $\cal K$, $C_2$ as in Lemma \ref{PartII2lem markov partition}. We will briefly denote $\bA_{\varepsilon, C_2, \cal K}(f)$ by $\bA_{\varepsilon}(f)$. When $f$ denotes a strictly expanding map, we define $\bA_{\varepsilon}(f)$ to be the collection of balls in $X$ of radius $\varepsilon$.

\subsection{Conditions for the construction}\label{PartII2Conditions for the construction}

As usual, we let $SL(2,\R)$ denote the special linear group acting on $\R^2$. We have a canonical action of $SL(2,\R)$ on $\mathbb P(\R^2)$. We use map $\psi : \mathbb P(\R^2) \to \T$, $\psi( \R(\cos \pi\theta, \sin \pi\theta)) = \theta$ to  identify $\mathbb P(\R^2)$ with $\T$. For any $H \in SL(2,\R)$, we denote $\widehat{H} = \psi H \psi^{-1} \in \diff^{\infty}(\T)$.

Let $H_{0} \in SL(2,\R)$ be a hyperbolic element with eigenvalues $e^{\alpha}, e^{-\alpha}$. Let $u_0, s_0 \in \T$ be respectively the sink and source of $\widehat{H_0}$. Then for all $H \in SL(2,\R)$ sufficiently close to $H_0$, $H$ is still a hyperbolic element. Let $u(H),s(H) \in \T$ be respectively the sink and source of $\widehat{H}$. Then we can easily verify that for all $H$ sufficiently close to $H_0$ the following is true,

\textbf{(HYP)} : \textit{ there exists a constant $c > 0$ such that  for any $\delta \in (0, \frac{1}{2})$,
 \aryst \widehat{H}^{n}(\T \setminus B(s(H), \delta)) \subset B(u(H), c\delta^{-1}e^{-n\alpha}), \quad \forall n \geq 1
\earyst}
 
We let $C_0  = \widehat{H_0} \in \diff^{\infty}(\T)$. Let $B_0 \in \diff^{\infty}(\T)$ satisfy
 that $B_0 u_0= s_0$. 
We denote by $\hat{C}, \hat{B} : \R \to \R$ respectively lifts of $C_0, B_0$. Let $\hat{u}_0,\hat{s}_0 \in [0,1)$ be respectively lifts of $u_0, s_0$. Without loss of generality, we can assume that : (1) $\hat{u}_0, \hat{s}_0$ are both fixed by $\hat{C}$, (2) $\hat{B}(\hat{u}_0) = \hat{s}_0$, (3) $\hat{s}_0 < \hat{u}_0$.

Let $M$ be a compact Riemannian manifold.  Let  map $g: M \to M$ be either a $C^r$ transitive Anosov diffeomorphism, or a $C^r$ strictly expanding  map. We denote by $m$ the unique SRB measure of $g$.

We denote by $p_1 : M \times \T \to M$, $p_2 : M \times \T \to \T$ be the canonical projections.
We let $f : M \times \T \to M \times \T$ be a $C^r$ map defined by
\ary \label{PartII2deff}
f(z,x) = (g(z), A(z,x))
\eary
where $A : M \times \T \to \T$ is a $C^r$ map.

We will assume that $f$ satisfies the following,

\enmt[(a)]
\item\label{PartII2laba} $\sup_{z \in M}\norm{DA(z,\cdot)}$ is small enough so that $f$ is partially hyperbolic,
\item\label{PartII2labb} there exists $\hat{f} : M \times \R \to M \times \R$ such that for any $(z,x) \in M\times \R$, $\hat{p}_2\hat{f}(z,x+1) = \hat{p}_2\hat{f}(z,x)+1$ and $\pi \hat{f} = f \pi$, where $\hat{p}_2 : M \times \R \to \R$, $\pi : M \times \R \to M \times \T$ are the canonical projections,
\item\label{PartII2labc} there is an open set $\cC\subset\{z | A(z,\cdot) = C_0 \} $, an open set $\cal B \subset \{z |  A(z, \cdot) = B_0 \}$ and constants $\kappa, c_0 \in (0,1)$, such that
\aryst
m(\cC) > 1 - \kappa, \quad m(\cB) > c_0, \quad m(\overline{\cC \bigcup \cB}) < 1
\earyst
\item \label{PartII2labd} there exist $z \in \cC \bigcap supp(m)$ and an integer $q \geq 1$ such that $g^{q}(z) = z$ and $f^{q}(z,s_0) \neq (z, s_0)$. 

We take an arbitrary $\varepsilon_1 \in (0,   d(z, \partial \cC) )$ and denote map $D: \T \to \T$ by
\aryst
D(x) = p_2f^{q}(z,x), \quad \forall x \in \T
\earyst
We let $\varepsilon_2 > 0$ be a constant such that $D(B(s_0,\varepsilon_2)) \bigcap B(s_0,\varepsilon_2) = \emptyset$.
Without loss of generality, we assume that $\varepsilon_1,\varepsilon_2 \in (0, \varepsilon_0(f, \frac{1}{2}))$, where $\varepsilon_0$ is given by Lemma \ref{PartII2lem markov partition}.

\item \label{PartII2labe} there exists a constant $\varepsilon \in (0, \min(\varepsilon_1, \varepsilon_2)/10)$ such that the following is true. For any $\cD \in \bA_{\varepsilon}(f)$, there exist disjoint $\cal E_1, \cdots, \cal E_l \in \bA_{\varepsilon}(f)$, and $d_1,\cdots, d_l > 0$ such that for all $1 \leq i \leq l$,
\aryst
\overline{\cal E_i} \subset f(\cD) \bigcap (\cC \times \T), \quad \sum_{i=1}^{l} d_i Vol|_{\cal E_i} \leq f_{*}Vol|_{\cD}, \quad \sum_{i=1}^l d_i > 1-\kappa
\earyst
Similarly, there exist disjoint $\cal F_1, \cdots, \cal F_k \in \bA_{\varepsilon}(f)$, and $h_1, \cdots, h_k > 0$,  such that for all $1\leq i\leq k$,
\aryst
\overline{\cal F_i} \subset f(\cD) \bigcap (\cB \times \T), \quad \sum_{i=1}^{k} h_i Vol|_{\cal F_i} \leq f_{*}Vol|_{\cD}, \quad \sum_{i=1}^k h_i > c_0
\earyst
\item \label{PartII2labf} Let $\varepsilon > 0$ be as in \eqref{PartII2labe}. There exists  closed interval $ J_1 \subset \T \setminus \{s_0\}$ such that there is a constant $K \in \N$,  such that for each $\cD \in \bA_{\varepsilon}(f)$, there exists $\cD'\in \bA_{\varepsilon}(f)$ satisfying $\overline{\cD'} \subset f^{K}(\cD) \bigcap (\cC \times J_1)$. We let $J$ be a closed interval contained in $\T \setminus \{s_0\}$ such that
$J_1 \Subset J$. 
We denote $\widehat{J} = \pi_{\R \to \T}^{-1}(J) \bigcap [0,1)$.

\item\label{PartII2labg}  for any $\varepsilon' > 0$, any sequence $\{\cD_i\}_{i \geq 0} \subset \bA_{\varepsilon'}(f)$, any strictly increasing $\{K_i\}_{i \geq 0} \subset \N$, the accumulating points of $(f^{K_n})_{*}(Vol|_{\cD_{n}})$ are contained in $uGibbs(f)$.
\eenmt

\begin{rema} \label{PartII2rema stability of conditions}
It is clear there exists $\sigma > 0$ such that properties \eqref{PartII2laba}, \eqref{PartII2labb}, \eqref{PartII2labc}, \eqref{PartII2labe}, \eqref{PartII2labf} are satisfied for any $f' : M \times \T \to M \times \T$ satisfying that $d_{C^r}(f,f')  < \sigma$, $p_1 f' = p_1 f$, and that $f'(z,\cdot) \neq f(z,\cdot)$ is contained in a $\sigma-$ball.
\end{rema}

\begin{rema} \label{PartII2rema condition a to e}
We now explain the applicability of the above conditions. 
Given any $C^r$ map $A : M \times \T \to \T$, we can make \eqref{PartII2laba} valid via replacing $g$ by any large power of $g$. 
Condition \eqref{PartII2labb} is valid for any $A$ that is $C^0$ close to  maps of the form $A_0 : M \times \T \to \T, A_0(z,x) = x+\varphi(z)$, and for any $g \in C^r(M, M)$. 
For any $\kappa, c_0 \in (0,1), \kappa + c_0 < 1$, we can choose $A$ satisfying condition \eqref{PartII2labc}  since $m$ has no atoms. 
The validity of \eqref{PartII2labd} is easily satisfied. For any $\varepsilon > 0$, we can make \eqref{PartII2labe} valid via replacing $g$ by any large power of $g$ : this is obvious  for strictly expanding $g$; for Anosov map $g$, this follows from \eqref{PartII2labc}, Lemma \ref{PartII2lem markov partition}. We will verify \eqref{PartII2labf} in Lemma \ref{PartII2lem condition f}.
\end{rema}
\begin{lemma}\label{PartII2lem condition f}
If we have \eqref{PartII2laba}, \eqref{PartII2labd}, \eqref{PartII2labe}, \eqref{PartII2labg}, then we have \eqref{PartII2labf}.
\end{lemma}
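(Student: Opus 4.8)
The plan is to deduce (f) in two stages: a routine ``contraction toward the sink $u_0$'' stage, and an ``escape from the source $s_0$'' stage, the latter being where (d) and (g) enter. For the contraction stage, recall $C_0=\widehat{H_0}$ is a circle diffeomorphism with attracting fixed point $u_0$ and repelling fixed point $s_0$, so by (HYP) applied to $H_0$, as soon as $\delta_0>0$ is small enough that $C_0$ expands $B(s_0,\delta_0)$, one has $C_0^{j}(\T\setminus B(s_0,\delta_0))\subset\T\setminus B(s_0,\delta_0)$ for all $j\ge0$, and these sets shrink uniformly to $\{u_0\}$. Fix once and for all a small closed interval $J_1$ around $u_0$ with $s_0\notin J_1$ and $|J_1|\gg\varepsilon C_2$ ($\varepsilon C_2$ being an upper bound for the fibre-diameter of any $\cD\in\bA_\varepsilon(f)$), let $J$ be a slightly larger closed interval around $u_0$ still missing $s_0$, and then pick $\delta_0$ small and $N_1\in\N$ with $C_0^{N_1}(\T\setminus B(s_0,\delta_0))\subset J_1$. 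If $\cE\in\bA_\varepsilon(f)$ satisfies $\overline{\cE}\subset\cC\times\T$ and $p_2(\cE)\cap B(s_0,\delta_0)=\emptyset$, then applying (e) repeatedly produces $\cE=\cE_0,\cE_1,\dots,\cE_{N_1}\in\bA_\varepsilon(f)$ with $\overline{\cE_j}\subset f(\cE_{j-1})\cap(\cC\times\T)$; since each $\cE_{j-1}$ has base in $\cC$, the fibre map on it is $C_0$, whence $p_2(\cE_j)\subset C_0^j(\T\setminus B(s_0,\delta_0))$ and $p_2(\cE_{N_1})\subset J_1$, so $\cD':=\cE_{N_1}$ satisfies $\overline{\cD'}\subset f^{N_1}(\cE)\cap(\cC\times J_1)$. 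Consequently (f) holds with this $J_1$ and $K=N_1+K_2$ provided we prove the uniform \emph{escape} statement: there are $K_2\in\N$ and $\delta_0>0$ as above such that every $\cD\in\bA_\varepsilon(f)$ admits $\cE\in\bA_\varepsilon(f)$ with $\overline{\cE}\subset f^{K_2}(\cD)\cap(\cC\times\T)$ and $p_2(\cE)\cap B(s_0,\delta_0)=\emptyset$.

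I would prove escape by contradiction, using (g). If it fails, then for every $K$ there is $\cD_K\in\bA_\varepsilon(f)$ whose image $f^K(\cD_K)$ contains no element of $\bA_\varepsilon(f)$ lying in $\cC\times(\T\setminus B(s_0,\delta_0))$; iterating the contraction stage forward from a hypothetical good piece at any intermediate time upgrades this to: $f^m(\cD_K)$ contains no element of $\bA_\varepsilon(f)$ whose closure lies in $\cC\times J_1$, for all $m\le K$. Applying the Markov-partition Lemma \ref{PartII2lem markov partition} to $f^m(\cD_K)$ (for $m>N_0$) yields sub-pieces $\cF_i\in\bA_\varepsilon(f)$ carrying all but $\kappa$ of $(f^m)_*(Vol|_{\cD_K})$, none with closure in $\cC\times J_1$; since $\mathrm{diam}(\cF_i)\lesssim\varepsilon C_2\ll|J_1|$, each $\cF_i$ is disjoint from $\cC'\times J_1'$, where $\cC'$ and $J_1'$ are the $\varepsilon C_2$-interiors of $\cC$ and $J_1$. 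Hence $(f^m)_*(Vol|_{\cD_K})(\cC'\times J_1')<\kappa$ for all $N_0<m\le K$, and feeding the sequence $(\cD_K)_K$ with the strictly increasing times $K$ into (g) produces $\mu\in uGibbs(f)$ with $\mu(\cC'\times J_1')\le\kappa$. The contradiction will come from showing that every $\mu\in uGibbs(f)$ concentrates near $\cC\times\{u_0\}$, so in particular $\mu(\cC'\times J_1')>\kappa$.

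For that last point: since the unstable direction of $f$ projects under $p_1$ onto that of $g$, the measure $p_{1*}\mu$ has absolutely continuous conditionals along $g$-unstables, so $p_{1*}\mu=m$ by uniqueness of the SRB measure of the transitive Anosov (resp. expanding) map $g$; thus $\mu(\cC\times\T)=m(\cC)$ and $z\in\mathrm{supp}(p_{1*}\mu)$. Disintegrating $\mu$ over the base, $f$-invariance expresses the fibre conditionals as push-forwards under the fibre cocycle along backward orbits, which is a composition of copies of $C_0$ at every return to $\cC$ — a positive-frequency event, as $m(\cC)$ is close to $1$ — and by (HYP) such compositions contract toward $u_0$; so the conditionals must sit near $u_0$ unless they sustain mass near $\cC\times\{s_0\}$, the set $C_0$ alone preserves. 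But (d) rules this out: whenever a backward orbit returns near the periodic point $z$, the local return map over $q$ steps is a small perturbation of the circle diffeomorphism $D=p_2 f^q(z,\cdot)$, and $D(B(s_0,\varepsilon_2))\cap B(s_0,\varepsilon_2)=\emptyset$ expels whatever mass lies near $s_0$; combining this with the ergodicity of the base and $z\in\mathrm{supp}(m)$, no uGibbs fibre conditional over a definite portion of $\cC$ can carry mass near $s_0$, which forces the asserted concentration near $u_0$. The main obstacle is precisely this last argument: quantitatively weighing the monotonicity of the fibre dynamics over $\cC$ (mass drifts to $u_0$) against the leakage from $M\setminus\cC$ and from near $s_0$, and closing the loop through the periodicity supplied by (d) and the uniformity supplied by (a). The requirement that the escape time $K_2$ be uniform in $\cD$, rather than $\cD$-dependent, is exactly what forces this compactness route through (g) instead of a naive step-by-step iteration of (e).
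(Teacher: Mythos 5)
Your proposal and the paper's proof diverge in a crucial design choice, and the divergence is precisely where the gap in your argument lies. The paper takes $J_1 = \T \setminus \overline{B(s_0,\varepsilon/2)}$, i.e.\ ``everything except a tiny ball around the source.'' With that choice, the only positivity statement one needs is that no $f$-invariant measure $\mu$ with $(p_1)_*\mu = m$ can concentrate its fibre conditionals over a small base ball $\cC' = B(z,\varepsilon)$ entirely inside $\overline{B(s_0,2\varepsilon)}$: this follows in three lines from (d), because $f$-invariance transports the conditional over $z'$ to $z''=g^q(z')$ by a circle map close to $D$, and $D$ moves $\overline{B(s_0,2\varepsilon)}$ off itself. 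The paper then extracts a single \emph{point} of $f^K(\cD_0)$ in $\cC'\times J'$ and blows it up to a full element of $\bA_\varepsilon(f)$ using expansion and the gap $\cD_0\Subset\cD$; there is no ``contraction toward $u_0$'' stage at all.

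You instead aim for $J_1$ a \emph{small} interval around the sink $u_0$, which forces you to prove the much stronger claim that every $\mu\in uGibbs(f)$ with $(p_1)_*\mu=m$ satisfies $\mu(\cC'\times J_1')>0$ with $\cC'$ most of $\cC$ and $J_1'$ tiny around $u_0$ — i.e.\ that the fibre conditionals over most of $\cC$ \emph{concentrate near $u_0$}, not merely that they are not entirely trapped near $s_0$. Your sketch argues ``the conditionals must sit near $u_0$ unless they sustain mass near $\cC\times\{s_0\}$,'' but this dichotomy is false in general: after a passage through $\cB$ (where $B_0$ throws $u_0$ to $s_0$) the conditional is pulled away from $u_0$, then drifts slowly back during the subsequent $\cC$-run, and at any fixed moment a definite fraction of the mass can be in transit, far from both $s_0$ and $u_0$. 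Condition (d) only expels mass from a neighbourhood of $s_0$; it does not force accumulation near $u_0$. You flag this yourself (``the main obstacle is precisely this last argument''), and rightly so — as written it is a genuine gap, not a detail. There are smaller technical frictions too (you need the Markov-partition parameter $\kappa$ to beat $\inf_\mu\mu(\cC'\times J_1')$, while $\varepsilon$ was fixed in (e) against $\kappa=\tfrac12$; and $\overline{\cD'}\subset\cC\times J_1$ requires $|J_1|$ comfortably larger than the fibre-extent of $\bA_\varepsilon(f)$ pieces, which is in tension with $C_0^{N_1}(\T\setminus B(s_0,\delta_0))$ shrinking to a point as $N_1$ grows), but the central issue is that the paper deliberately works with the weak statement ``positive mass away from $s_0$'' so that (d) alone closes the argument, whereas your route demands quantitative concentration at $u_0$, which would require a new and substantially harder lemma.
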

\begin{proof}
Let $\varepsilon$ be in \eqref{PartII2labe}.
Let $z \in \cC, q \in \N$ be given by \eqref{PartII2labd}.
We denote $\cC' = B(z, \varepsilon)$. By $\varepsilon < \varepsilon_1$, we have $\cC' \subset \cC$. We denote $J' = \T \setminus \overline{B(s_0, 2\varepsilon)}$.
We first show that
\ary \label{PartII2inf mu in ugibbs}
\inf_{ \substack{\mu : f_{*}\mu = \mu, \\ (p_1)_{*}\mu = m }} \mu(\cC' \times J') > 0
\eary
Indeed, if \eqref{PartII2inf mu in ugibbs} was false, then there would exists a sequence $\{c_n  > 0\}_{n \geq 1}$, $\{\mu_{n}, f_{*}\mu_n = \mu_n, (p_1)_{*}\mu_n = m\}_{n \geq 1}$ such that  $\lim_{n \to \infty}c_n = 0$ and $\mu_{n}(\cC' \times J') < c_n$ for all $n \geq 1$. Let $\mu$ be an accumulating point of $\mu_n$. It is clear that $\mu$ is $f-$invariant and $(p_1)_{*}\mu = m$. Moreover, we have
\aryst
\mu(\cC' \times J') \leq \liminf_{k \to \infty} \mu_k(\cC' \times J') \leq \liminf_{k \to \infty}c_k = 0
\earyst
Thus $\mu(\cC' \times J') = 0$. This implies that for $m$ almost every $z' \in \cC'$ the conditional measure $\mu_{z'}$ on $\{z'\} \times \T \simeq \T$ is supported in $\overline{B(s_0, 2\varepsilon)}$. By $z \in supp(m)$, we can let $z', z''$ be two $m$ generic points sufficiently close to $z$, such that $z'' = g^{q}(z')$, and $\mu_{z'}, \mu_{z''}$ are supported in $\overline{B(s_0, 2\varepsilon)}$. Moreover the map $D_{z',z''} : \T \to \T$ defined by $D_{z',z''}(x) = p_2 f^{q}(z', x)$, satisfies 
\aryst D_{z', z''}(\overline{B(s_0, 2\varepsilon)}) \bigcap \overline{B(s_0, 2\varepsilon)} = \emptyset 
\earyst
 By the $f-$invariance of $\mu$, for a generic choice of $z',z''$ as above, we have $D_{z',z''}\mu_{z'} = \mu_{z''}$. This is a contradiction.

 We claim that there exist arbitrarily large $K$ such that for any $\cD' \in \bA_{\varepsilon/2}(f)$, $f^{K}(\cD') \bigcap (\cC' \times J') \neq \emptyset$. Indeed,
if there was a sequence $\{ \cD_n \}_{n \geq 1} \subset \bA_{\varepsilon/2}(f)$, a strictly increasing sequence $\{ K_n \}_{n \geq 1} \subset \N$, such that for any $n \geq 1$, $f^{K_n}(\cD_n) \bigcap (\cC' \times J' ) = \emptyset$. We let $\mu$ be an accumulating point of $(f^{K_n})_{*}(Vol|_{\cD_n})$, then $\mu(\cC' \times J') = 0$. By \eqref{PartII2labg}, $\mu \in uGibbs(f)$. Then it is clear that $(p_1)_{*}\mu = m$. But then $\mu(\cC' \times J')  > 0$. Contradiction. Thus our claim is true.
 
 We let $J_1 = \T \setminus \overline{B(s_0, \varepsilon/2)}$. Then it is clear that $J' \Subset J_1$ and $d(J', J_1^c) \geq 3\varepsilon/2$. 
Take an arbitrary $\cD \in \bA_{\varepsilon}(f)$. We choose $\cD_0 \in \bA_{\varepsilon/2}(f)$ such that $\cD_0 \Subset \cD$ and $d(\cD_0, \partial \cD) > \frac{\varepsilon}{10C_2}$, where $C_2$ is in the definition of $\bA_{\varepsilon}(f)$. For any $K_0 > 0$, by our claim above there exists $K = K(K_0,\varepsilon) > K_0$, independent of the choice of $\cD, \cD_0$, such that $f^{K}(\cD_0) \bigcap (\cC' \times J') \neq \emptyset$. Let $(z',x')$ be a point in $f^{K}(\cD_0) \bigcap (\cC' \times J')$. Then by letting $K_0$ to be sufficiently large, we can find a neighbourhood of $(z',x')$ in $f^K(\cD)$, denoted by $\cD'$, such that $\cD' \in \bA_{\varepsilon}(f)$ and $diam(\cD') < \varepsilon_0$. Since $d(\cC' \times J',  \cC \times J_1 ) > 3\varepsilon/2$, we have $\overline{\cD'} \subset \cC \times J_1$. This concludes the proof.
\end{proof}

Remark \ref{PartII2rema condition a to e} and Lemma \ref{PartII2lem condition f} suggest a way of constructing dynamics satisfying condition \eqref{PartII2laba} to \eqref{PartII2labg}, as the following proposition shows.

\begin{prop}\label{PartII2thm existence of examples}
For any $\kappa \in (0,1), c_0 \in (0, 1-\kappa)$, there exists a partially hyperbolic, stably dynamically coherent, u-convergent, mostly contracting diffeomorphism $f$ on $\T^3$ satisfying \eqref{PartII2laba} to \eqref{PartII2labg}.
\end{prop}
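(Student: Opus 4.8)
The plan is to construct $f$ explicitly as a skew product over a strictly expanding map (or a transitive Anosov map) on $M$, choosing all the ingredients so as to match conditions \eqref{PartII2laba}--\eqref{PartII2labg}, and then invoke Lemma \ref{PartII2lem condition f} to get \eqref{PartII2labf} for free. Concretely, take $M = \T^1$ and $g$ a linear expanding map $z \mapsto \ell z$ of large degree $\ell$ (the degree will be chosen large at the end, after $\varepsilon$ is fixed, so that \eqref{PartII2labe} holds by Lemma \ref{PartII2lem markov partition}; this also gives strong partial hyperbolicity, hence \eqref{PartII2laba}). Then $X = M \times \T = \T^2$; but to land on $\T^3$ as required by the ``Moreover'' clause, one instead takes $g$ to be a transitive Anosov diffeomorphism of $\T^2$ (a large power of a hyperbolic toral automorphism) and $X = \T^2 \times \T = \T^3$. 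I will carry out the construction in this $\T^3$ setting, the expanding case being strictly easier.

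First I would fix the fiber maps: pick $H_0 \in SL(2,\R)$ hyperbolic with eigenvalues $e^{\pm\alpha}$, set $C_0 = \widehat{H_0} \in \diff^\infty(\T)$ with sink $u_0$ and source $s_0$, and pick $B_0 \in \diff^\infty(\T)$ with $B_0 u_0 = s_0$; normalize lifts as in the paragraph preceding the proposition so that \textbf{(HYP)} holds. Next I would design the cocycle map $A : \T^2 \times \T \to \T$. Using that the SRB measure $m$ of $g$ has no atoms, choose disjoint open sets $\cC, \cB \subset M$ with $m(\cC) > 1-\kappa$, $m(\cB) > c_0$, $m(\overline{\cC \cup \cB}) < 1$ (possible since $\kappa + c_0 < 1$), and define $A$ so that $A(z,\cdot) = C_0$ for $z$ in a neighborhood of $\overline{\cC}$, $A(z,\cdot) = B_0$ for $z$ in a neighborhood of $\overline{\cB}$, and $A(z,\cdot)$ interpolated smoothly (through diffeomorphisms of $\T$) elsewhere; shrink $\sup_z\|DA(z,\cdot)\|$ by the standard device of replacing $g$ with a high power, which forces partial hyperbolicity \eqref{PartII2laba}. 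Because each $A(z,\cdot)$ lifts to $\R$ commuting with integer translation, \eqref{PartII2labb} holds. Condition \eqref{PartII2labd} is arranged by choosing a periodic point $z \in \cC \cap \mathrm{supp}(m)$ of some period $q$ (dense, since $\mathrm{supp}(m) = M$) and noting $f^q(z,s_0) = (z, C_0^q(s_0)) = (z,s_0)$ would force equality — so instead perturb $A$ in a tiny neighborhood of $\{z\} \times \T$, staying inside the ``$A(z,\cdot)=C_0$'' region except in an arbitrarily small ball, to make $D(x) = p_2 f^q(z,x)$ a diffeomorphism of $\T$ fixing $s_0$ with $D'(s_0) \neq 1$ — wait, more robustly: perturb so that $D(s_0) \neq s_0$ outright, which is what \eqref{PartII2labd} literally demands, and then pick $\varepsilon_2$ with $D(B(s_0,\varepsilon_2)) \cap B(s_0,\varepsilon_2) = \emptyset$.

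With \eqref{PartII2laba}, \eqref{PartII2labd} in hand, condition \eqref{PartII2labe} follows from Lemma \ref{PartII2lem markov partition} applied with $\kappa$ (resp. $c_0$) after one more replacement of $g$ by a large power: any unstable-admissible curve $\cD \in \bA_\varepsilon(f)$ has $f^N(\cD)$ (hence, absorbing $N$ into the power, $f(\cD)$) crossing $\cC \times \T$ in near-full measure and $\cB \times \T$ in measure $> c_0$, because the base dynamics equidistributes admissible pieces with respect to $m$ — this is exactly the content of the Markov-partition lemma together with $m(\cC) > 1-\kappa$, $m(\cB) > c_0$. Condition \eqref{PartII2labg} is the statement that weak-$*$ limits of pushforwards of normalized volume on admissible unstable pieces are $u$-Gibbs states; this is a standard distortion argument (the uniform log-Lipschitz bound on Jacobians stated just before Lemma \ref{PartII2lem markov partition} gives absolute continuity of the conditionals of any limit measure on unstable plaques), so I would cite it as classical. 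Finally, \eqref{PartII2labf} is supplied by Lemma \ref{PartII2lem condition f} from \eqref{PartII2laba}, \eqref{PartII2labd}, \eqref{PartII2labe}, \eqref{PartII2labg}. For the ``Moreover'' assertions: mostly contracting follows because along the center $\T$-fiber the dynamics is, $m$-almost everywhere (on $\cC$, which has measure $> 1-\kappa$), governed by the contraction of $C_0$ toward its sink, so the center Lyapunov exponent of the unique physical measure is negative — here one should also arrange $\kappa$ small and the interpolation region to not create center expansion; stable dynamical coherence and $u$-convergence for such skew products over Anosov maps on $\T^3$ are known (Dolgopyat's framework, or the fact that the center foliation by $\T$-fibers is smooth and $f$-invariant, giving coherence stably, while $u$-convergence follows from the expanding base contracting the transverse direction between unstable plaques uniformly).

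\textbf{Main obstacle.} The delicate point is \eqref{PartII2labd} together with preserving mostly contracting: one must insert a genuine perturbation near the single orbit $\{z\}\times\T$ that destroys the coincidence $f^q(z,s_0)=(z,s_0)$ (needed later, in Subsection \ref{PartII2Proving nondifferentiability}, to drive the strict-H\"older-but-not-differentiable behavior via the monotonicity/periodicity mechanism) \emph{without} violating $m(\cC)>1-\kappa$, without breaking partial hyperbolicity, and without turning the center exponent positive. The safeguard is that all of conditions \eqref{PartII2laba}, \eqref{PartII2labb}, \eqref{PartII2labc}, \eqref{PartII2labe}, \eqref{PartII2labf} are stable under $C^r$-small, $C^0$-localized skew-product perturbations by Remark \ref{PartII2rema stability of conditions}, so the strategy is: first build $f_0$ satisfying \eqref{PartII2laba}--\eqref{PartII2labc} and \eqref{PartII2labe}--\eqref{PartII2labg} with a large margin, then perturb inside a $\sigma$-ball around $\{z\}\times\T$ to also get \eqref{PartII2labd}, invoking Remark \ref{PartII2rema stability of conditions} to retain everything else. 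Verifying that this localized perturbation can be done while keeping $z \in \mathrm{supp}(m)$ (automatic, since $g$ is unchanged) and keeping the center exponent negative (choose $\sigma \ll \alpha$, so the perturbation changes fiberwise derivatives by at most $O(\sigma)$ on a set of small $m$-measure) is the crux; everything else is bookkeeping over the already-established lemmas.
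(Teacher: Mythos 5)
Your blueprint follows the paper's Example (a)-style construction over a linear Anosov $g$ on $\T^2$ and correctly identifies that the crux is arranging \eqref{PartII2labd} while keeping all the other constraints; but the fix you propose for \eqref{PartII2labd} does not work, and the paper uses a structurally different one.

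\textbf{The perturbation location is wrong.} Condition \eqref{PartII2labc} defines $\cC$ as an open subset of $\{z : A(z,\cdot) = C_0\}$, and \eqref{PartII2labd} requires the periodic point $z$ to lie in $\cC$. You propose to perturb $A$ in a small neighbourhood of $\{z\}\times\T$, ``staying inside the $A(z,\cdot)=C_0$ region except in an arbitrarily small ball,'' and then invoke Remark \ref{PartII2rema stability of conditions}. But any modification of $A(z,\cdot)$ — however small — immediately removes $z$ from the set $\{z' : A(z',\cdot) = C_0\}$, and hence from any admissible choice of $\cC$. So after your perturbation, the point $z$ you are using for \eqref{PartII2labd} is no longer in $\cC$, and \eqref{PartII2labd} fails. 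Remark \ref{PartII2rema stability of conditions} only says that \eqref{PartII2laba}, \eqref{PartII2labb}, \eqref{PartII2labc}, \eqref{PartII2labe}, \eqref{PartII2labf} persist under small localised skew-product perturbations; it says nothing about keeping a chosen $z$ inside $\cC$. What the paper does instead is exploit transitivity: it chooses a periodic $z\in\cC\cap\mathrm{supp}(m)$ whose $g$-orbit \emph{also visits $M\setminus\overline{\cC\cup\cB}$}, and then perturbs $\cal S$ (hence $A$) only \emph{outside} $\overline{\cC\cup\cB}$. This leaves $\cC$ and $\cB$ untouched (so \eqref{PartII2labc} and $z\in\cC$ are intact), yet still changes the return map $D=p_2f^q(z,\cdot)$ because the orbit passes through the perturbed region, so one can force $D(s_0)\neq s_0$.

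\textbf{Secondary issues.}
First, replacing $g$ by an arbitrary large power at the end is not harmless: it can change the period of $z$ and hence the return map $D$, possibly reinstating $f^q(z,s_0)=(z,s_0)$. The paper replaces $g$ by $g^{nq+1}$ specifically, checks $\gcd(nq+1,q)=1$ so $z$ retains period $q$, and verifies $D_n=D$, so $\varepsilon_2$ persists; your proposal never records this compatibility. Second, your justification of \eqref{PartII2labb} (``each $A(z,\cdot)$ lifts, hence \eqref{PartII2labb}'') conflates fiberwise lifting with the existence of a continuous family of lifts over the non-simply-connected base $\T^2$; the paper handles the monodromy obstruction by taking $C_0,B_0$ close to rotations so the image of $z\mapsto A(z,\cdot)$ sits in a contractible neighbourhood. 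Third, your argument for mostly contracting (center contracts because $C_0$ dominates on $\cC$, with ``$\kappa$ small'' as a hedge) cannot work as stated: the proposition asks for \emph{every} $\kappa\in(0,1)$. The paper instead arranges that the image of $\cal S$ generates $SL(2,\R)$ (a point you drop) and cites the framework in \cite{Do1} (Example (a), Section~12 and Corollary~6.3) to get mostly contracting, $u$-convergence, and \eqref{PartII2labg}, rather than the ad hoc distortion and ``center is contracting on most of the base'' reasoning.
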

\begin{proof}
We will follow Example (a), Section 12 in \cite{Do1}. Let $M = \T^2$ and let $g: M \to M$ be a linear Anosov diffeomorphism. It is known that the Lebesgue measure on $M$, denote by $m$, is the unique SRB measure for $g$. We let $C_0, B_0$ be projective actions of $SL(2,\R)$ on $\T$, satisfying the conditions in the beginning of this section. Let $\cC, \cB$ be two disjoint open sets of $M$ satisfying $m(\cC) > 1-\kappa, m(\cB) > c_0$ and $m(\overline{\cC} \bigcup \overline{\cB}) < 1$. We let $\cal S : M \to SL(2,\R)$ be a $C^r$ map such that $\cal S|_{\cC} \equiv C_0, \cal S|_{\cB} \equiv B_0$. We let $A: M \times \T \to \T$ be defined by $A(z,x) = \widehat{\cal S(z)}(x)$, so that \eqref{PartII2labc} is satisfied. By choosing $C_0, B_0$ to be close to rotations, it is easy to choose $\cal S$ so that \eqref{PartII2labb} is also satisfied. Since $\cC$ is an open set, and $m(\cC), m(M \setminus \overline{\cC \bigcup \cB}) > 0$, there exists a $g$ periodic point $z \in \cC \bigcap supp(\mu)$, and the $g$ orbit of $z$ intersects $M \setminus (\overline{\cC \bigcup \cB})$. We can make an arbitrarily small modification on $\cal S$ outside of $\overline{\cC \bigcup \cB}$ so that \eqref{PartII2labd} is satisfied, and the image of $\cal S$ generates $SL(2,\R)$. Moreover, any such modification will not ruin \eqref{PartII2labb}, \eqref{PartII2labc}. Now let $\varepsilon_1, \varepsilon_2$ be defined in \eqref{PartII2labd}, and let $\varepsilon = \frac{\min(\varepsilon_1, \varepsilon_2)}{20}$. Let $q > 0$ be the period of $z$, i.e. $g^{q}(z) = z$, and define $D : \T \to \T$ by $D(x) = p_2f^{q}(z,x)$. For integer $n \geq 1$, we define $f_n : M \times \T \to M \times \T$ by
\aryst
f_n(z',x') = (g^{nq+1}(z'), A(z',x')), \quad \forall (z',x') \in M \times \T
\earyst
and define $D_n : \T \to \T$ by $D_n(x) = p_2f_n^{q}(z,x)$. It is direct to verify that $D = D_n$ for all $n \geq 1$. In particular,  constant $\varepsilon_2$ is valid for all $f_n, n \geq 1$ in place of $f$.

By Remark \ref{PartII2rema condition a to e}, \eqref{PartII2laba}, \eqref{PartII2labe} are satisfied when we replace $g$ by any sufficiently large power of $g$. Since the center foliation of $g$ is a $C^1$ foliation, this is known to imply stably dynamically coherence. Moreover, by the discussion in Example (a), Section 12 \cite{Do1}, after replacing  $g$ by any sufficiently large power of $g$, $f$ become u-convergent and  mostly contracting. Then $f$ satisfifes Theorem II in \cite{Do1}, thus \eqref{PartII2labg} is verified by Corollary 6.3 in \cite{Do1}. By Lemma \ref{PartII2lem condition f}, we can replace $g$ by $g^{nq+1}$ for sufficiently large $n$, so that $f$ satisfies the conditions of Theorem II in \cite{Do1} and \eqref{PartII2laba} to \eqref{PartII2labg}.
\end{proof}

\subsection{Proving nondifferentiability}\label{PartII2Proving nondifferentiability}

The main result of this section is the following.
\begin{prop} \label{PartII2prop proving nondiff}
Let $r = 2, 3, \cdots, \infty$ and $f : M \times \T \to M \times \T$ be a $C^r$ map given by \eqref{PartII2deff} satisfying  \eqref{PartII2laba}, \eqref{PartII2labb}, \eqref{PartII2labc}, \eqref{PartII2labe}, \eqref{PartII2labf}.
 Then there exists a $C^r$ family $\{ f_t\}_{t \in (-1,1)}$ of partially hyperbolic systems through $f$, a function $\phi \in C^{r}(M \times \T, \R)$, such that for any map $t \mapsto \mu_t \in uGibbs(f_{t})$, the map $t \mapsto \int \phi d\mu_t $ is not $\beta-$H\"older at $t=0$ for any $\beta > \frac{-6\log (1-\kappa)}{\alpha}$.
\end{prop}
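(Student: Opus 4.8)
The plan is to produce the non-smooth response by a \emph{monotone, fibrewise localised} deformation of $f$ that moves the repelling fixed point $s_0$ of the centre dynamics, and to exploit the fact that every $u$-Gibbs measure must put a power-law amount of mass near $s_0$. Write $\gamma := \tfrac{-\log(1-\kappa)}{\alpha}$. Fix a small $\delta_0>0$ and $\eta\in C^\infty(\T,[0,1])$ with $\mathrm{supp}\,\eta\subset B(s_0,\delta_0)$ and $\eta\equiv 1$ near $s_0$, and set $f_t(z,x)=(g(z),\,A(z,x)+t\,\eta(A(z,x)))$, so that $\hat p_2\hat f_t=\hat p_2\hat f+t\,\hat\eta\circ\hat p_2\hat f$ with $\hat\eta$ being $\mathbb Z$-periodic. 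For $|t|$ small, $f_t$ is a $C^r$ partially hyperbolic skew product with $p_1f_t=p_1f$; property \eqref{PartII2labb} survives because $\hat\eta$ is $\mathbb Z$-periodic, properties \eqref{PartII2labe}, \eqref{PartII2labf} survive because they are stable under $C^r$-small perturbations preserving the base map, and $f_t$ stays mostly contracting; hence $uGibbs(f_t)\neq\emptyset$, every $\mu\in uGibbs(f_t)$ projects to $m$ under $p_1$, and its conditionals on unstable manifolds are absolutely continuous with uniformly bounded distortion. (Property \eqref{PartII2labc}, which asks for exact equality with $C_0$ on a set of measure $>1-\kappa$, is destroyed, but it is not needed for $f_t$.) Since $\eta\ge 0$, the family $\{f_t\}$ is monotone in the fibre in the lift given by \eqref{PartII2labb}. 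The observable is $\phi(z,x)=\psi(x)$, with $\psi\in C^\infty(\T)$ supported in the interior of the arc $I_1\subset\T$ running from $s_0$ to $u_0$, $\psi\ge 0$, $\psi\not\equiv 0$, vanishing on a fixed neighbourhood of $s_0$.

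\emph{Step 1: power-law mass near the source.} The quantitative heart of the argument is the bound
\[
\mu\bigl(M\times B(s_0,\delta)\bigr)\ \ge\ c_\ast\,\delta^{c_1\gamma}\qquad(\text{all small }\delta>0),
\]
valid for some $c_\ast>0$ and an integer $c_1\le 6$ independent of $\delta$, of $t$ and of the $u$-Gibbs state, for every $\mu\in uGibbs(f_t)$ ($|t|$ small, $t=0$ included). I would prove it as follows: starting from any $\mathcal D\in\bA_{\varepsilon}(f)$, apply \eqref{PartII2labf} to push a sub-piece into $\mathcal C\times J_1$ with $J_1$ at a fixed distance from $s_0$; then apply \eqref{PartII2labe} about $n\sim\tfrac1\alpha\log\tfrac1\delta$ times, retaining all the $\mathcal C$-pieces. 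These pieces never meet $s_0$, so by property \textbf{(HYP)} their fibre extent is driven into $B(u_0,Ce^{-n\alpha})$, while the retained weight is $\gtrsim(1-\kappa)^n\asymp\delta^{\gamma}$; one more application of the $\mathcal B$-part of \eqref{PartII2labe} (weight $\ge c_0$) followed by $B_0$ maps these pieces into $M\times B(s_0,C\delta)$; a few further applications of \eqref{PartII2labe} keep a weight $\gtrsim\delta^{\gamma}$ of this mass in $\mathcal C$ and near $s_0$ for the bounded number of extra steps needed for escape bookkeeping. Passing to the $u$-Gibbs state via bounded distortion yields the displayed estimate, the generous constant $c_1$ absorbing the fixed losses in chaining \eqref{PartII2labe}, \eqref{PartII2labf} and the extra $(1-\kappa)^{O(\log 1/\delta)}$ factors.

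\emph{Step 2: rerouting and conclusion.} For $t>0$ small the centre fibre map of $f_t$ equals $C_0+t$ near $s_0$, so its repelling fixed point moves to $s_0-\tfrac{t}{e^\alpha-1}+o(t)$. Consequently any orbit that a $\mathcal B$-step of $f$ would drop onto $(s_0-ct,s_0)$ — and which under $f$ then escapes $s_0$ along the arc complementary to $I_1$, where $\psi\equiv 0$ — under $f_t$ lies to the right of the new repelling point and escapes along $I_1$, traversing $\mathrm{supp}\,\psi$ for an $O(1)$ number of steps. By Step 1 the ergodic-average weight of such rerouted orbits is $\gtrsim t^{c_1\gamma}$, uniformly over $\mu_t\in uGibbs(f_t)$, and they contribute this amount of mass to $\mathrm{supp}\,\psi$ for $\mu_t$ that was absent for $\mu_0$. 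Because $\eta\ge 0$ the reaction of the rest of the measure to the perturbation has a fixed sign, so it cannot cancel this gain; thus $\int\phi\,d\mu_t-\int\phi\,d\mu_0\ \ge\ c\,t^{c_1\gamma}-Ct\ \gtrsim\ t^{6\gamma}$ for all small $t>0$ and all $u$-Gibbs states, the subdominant $O(t)$ term being harmless when $6\gamma<1$, and non-Hölderness being automatic from non-constancy when $6\gamma\ge1$. Hence for every $\beta>6\gamma=\tfrac{-6\log(1-\kappa)}{\alpha}$ one has $\limsup_{t\to0^+}|t|^{-\beta}\bigl|\int\phi\,d\mu_t-\int\phi\,d\mu_0\bigr|=\infty$, i.e.\ $t\mapsto\int\phi\,d\mu_t$ is not $\beta$-Hölder at $0$.

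\emph{Main obstacle.} The algebra of displacing $s_0$ is trivial; the real work is (i) the power-law lower bound of Step 1 with constants uniform over all of $uGibbs(f_t)$ and over $t$ — a careful but routine chaining of \eqref{PartII2labe}, \eqref{PartII2labf} together with the distortion estimate and escape-time counts — and (ii) showing that the rerouted $t^{c_1\gamma}$ of mass genuinely shifts $\int\phi\,d\mu_t$ rather than being drowned by the generic $O(t)$ reaction of the bulk. Point (ii) is exactly what the monotone structure of $\{f_t\}$, combined with the periodicity \eqref{PartII2labb} used to lift and compare fibrewise, is for: it forces the whole measure's reaction to be one-signed, so the small exponent $c_1\gamma$ wins. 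The constant $6$ in the statement is merely slack covering the crude bookkeeping.
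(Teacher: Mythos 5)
Your construction captures the right amplification mechanism (a long $\mathcal C$-block pushing to the sink, one $\mathcal B$-step dropping onto the source, another $\mathcal C$-block re-expanding) and the same exponent $6\gamma$, but the route you take from there to non-H\"olderness has a genuine gap that the paper's proof is specifically designed to avoid.

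The paper does \emph{not} use a bump observable supported on an arc, and does \emph{not} try to prove a lower bound of the form $\mu_t(M\times B(s_0,\delta))\gtrsim\delta^{c_1\gamma}$. Instead it takes the \emph{winding/drift cocycle} $\phi_t(z,x)=\hat p_2\hat f_t(z,\hat x)-\hat x$ as the observable and compares $\mu_\delta$ with $\mu_{-\delta}$ directly. The whole point of this choice is that monotonicity of the lifted dynamics $\hat f_t$ in $t$ translates \emph{verbatim} into monotonicity of the Birkhoff sums $S_n(t,z,x)=\hat p_2\hat f_t^n(z,\hat x)-\hat x$: one has $S_n(\delta,\cdot)-S_n(-\delta,\cdot)\ge 0$ pointwise, so any integer winding jump produced by the $\mathcal C$-$\mathcal B$-$\mathcal C$ block is a \emph{permanent, never-cancelled} gain, and the paper only has to show such a jump occurs with probability $\gtrsim(1-\kappa)^{2L}$ per block of length $2L+1+K$ (Lemma \ref{PartII2lem Z}, Lemma \ref{PartII2lem increment}). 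Your Step 2, by contrast, rests on the assertion that because $\eta\ge 0$ ``the reaction of the rest of the measure to the perturbation has a fixed sign, so it cannot cancel this gain.'' This is false for a bump observable $\psi$: monotonicity of the lift says every fibre point moves upwards, and a point moving upwards can just as well \emph{exit} $\mathrm{supp}\,\psi$ as enter it. Concretely, mass rerouted past $s_0$ onto the $I_1$ side contributes positively, but mass already inside $\mathrm{supp}\,\psi$ that is pushed past the far end of $I_1$ contributes negatively, and without the cocycle structure you have no way to control the sign or size of the net effect. The claimed subdominant $-Ct$ term is likewise unsupported: the ``bulk reaction'' of $\mu_t$ to the perturbation is exactly what the theorem says you cannot bound by $O(t)$, so invoking it is circular.

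A secondary issue: your Step 1 lower bound on $\mu_t(M\times B(s_0,\delta))$ with constants uniform over all of $uGibbs(f_t)$ is plausible but not what the paper proves, and the passage ``a few further applications of \eqref{PartII2labe} keep a weight $\gtrsim\delta^\gamma$ of this mass \dots for the bounded number of extra steps needed for escape bookkeeping'' is precisely the kind of accounting the paper's $Y_n=\lfloor X_n\rfloor$ and $W_n\ge Y_n$ telescoping (Lemma \ref{PartII2main lemma nondiff}) is built to make rigorous. If you want to keep your localised perturbation $t\,\eta$, you would still need to replace $\psi$ with a drift-type observable so that the monotonicity actually closes the argument; as written the proof does not go through.
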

The following is an immediate corollary of Proposition \ref{PartII2prop proving nondiff} and Remark \ref{PartII2rema stability of conditions}.
\begin{cor}\label{PartII2stablenondiff}
For any $f: M \times \T \to M \times \T$ given in Proposition \ref{PartII2prop proving nondiff}, there exists $\sigma > 0$ such that for any $f' : M \times \T \to M \times \T$ satisfying that $d_{C^r}(f,f')  < \sigma$, $p_1 f' = p_1 f$, and that $f'(z,\cdot) \neq f(z,\cdot)$ is contained in a $\sigma-$ball, the same conclusion of Proposition \ref{PartII2prop proving nondiff} holds for $f'$ in place of $f$.
\end{cor}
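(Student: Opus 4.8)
The plan is to derive this corollary essentially formally from Proposition \ref{PartII2prop proving nondiff} together with Remark \ref{PartII2rema stability of conditions}. The point to exploit is that the proof of Proposition \ref{PartII2prop proving nondiff} invokes $f$ only through properties \eqref{PartII2laba}, \eqref{PartII2labb}, \eqref{PartII2labc}, \eqref{PartII2labe}, \eqref{PartII2labf} and the structural data $g, H_0, C_0, B_0$ fixed in Subsection \ref{PartII2Conditions for the construction}. So I would first take $\sigma > 0$ as furnished by Remark \ref{PartII2rema stability of conditions}; for any $f'$ with $d_{C^r}(f,f') < \sigma$, $p_1 f' = p_1 f$, and $\{z \in M : f'(z,\cdot)\neq f(z,\cdot)\}$ contained in a $\sigma$-ball $B$, the map $f'$ again has the form \eqref{PartII2deff} over the \emph{same} base $g$ and, by that remark, still satisfies \eqref{PartII2laba}, \eqref{PartII2labb}, \eqref{PartII2labc}, \eqref{PartII2labe}, \eqref{PartII2labf}. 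Applying Proposition \ref{PartII2prop proving nondiff} to $f'$ then produces a $C^r$ family $\{f'_t\}_{t\in(-1,1)}$ of partially hyperbolic systems through $f'$ and a function $\phi' \in C^r(M \times \T, \R)$ with the advertised non-H\"older behaviour; this is exactly the conclusion of the corollary, \emph{provided} the H\"older threshold comes out unchanged.

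The only step that deserves a short verification is precisely that the threshold $\beta > \frac{-6\log(1-\kappa)}{\alpha}$ for $f'$ agrees with the one for $f$, i.e. that the constants $\alpha$ and $\kappa$ do not degrade. Here I would observe that $\alpha$ depends only on the hyperbolic element $H_0$ through $C_0 = \widehat{H_0}$, and that $C_0$ is left untouched: writing $f'(z,x) = (g(z), A'(z,x))$, the localization hypothesis together with $p_1 f' = p_1 f$ forces $A'(z,\cdot) = A(z,\cdot)$ for every $z \notin B$, hence $A'(z,\cdot) = C_0$ on $\cC \setminus B$ and $A'(z,\cdot) = B_0$ on $\cB \setminus B$. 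Since $m(\cC) > 1-\kappa$, $m(\cB) > c_0$ and $m(\overline{\cC \cup \cB}) < 1$ are strict inequalities and $m(B) \to 0$ as $\sigma \to 0$, after shrinking $\sigma$ if necessary one may replace $\cC, \cB$ by $\cC \setminus \overline{B}$, $\cB \setminus \overline{B}$ and check \eqref{PartII2labc} for $f'$ with the same $\kappa$ and $c_0$. Thus $f'$ fits the framework of Subsection \ref{PartII2Conditions for the construction} with identical $H_0, C_0, B_0, \alpha, \kappa$, and Proposition \ref{PartII2prop proving nondiff} applied to $f'$ delivers the non-$\beta$-H\"older conclusion for every $\beta > \frac{-6\log(1-\kappa)}{\alpha}$, as claimed.

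I do not expect a serious obstacle: the substance is already packaged in Remark \ref{PartII2rema stability of conditions}, and the only thing to watch is that the perturbation ball $B$ may lie anywhere in $M$, in particular it may overlap $\cC$, $\cB$, or the periodic orbit from \eqref{PartII2labd}. The overlap with $\cC, \cB$ is absorbed by the shrinking argument above; the periodic orbit of \eqref{PartII2labd} plays no role because Proposition \ref{PartII2prop proving nondiff} does not use \eqref{PartII2labd}. For completeness I would also recall why \eqref{PartII2labf} survives with the same $J_1$ and $K$: since $K$ is a fixed integer, $f'^{K}$ is $C^1$-close to $f^{K}$, and the requirement that for each $\cD \in \bA_\varepsilon(f)$ there be $\cD' \in \bA_\varepsilon(f)$ with $\overline{\cD'} \subset f^K(\cD) \cap (\cC \times J_1)$ is an open condition enjoyed with room to spare (one has $J_1 \Subset J$), so it persists — which is exactly what Remark \ref{PartII2rema stability of conditions} records.
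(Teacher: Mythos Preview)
Your proposal is correct and follows precisely the paper's approach: the paper states that the corollary is an immediate consequence of Proposition \ref{PartII2prop proving nondiff} and Remark \ref{PartII2rema stability of conditions}, and you have spelled this out, adding the useful observation that the constants $\alpha$ and $\kappa$ (hence the H\"older threshold) are unchanged because $C_0, B_0$ and the strict inequalities in \eqref{PartII2labc} survive the localized perturbation.
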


\begin{proof}[Proof of Proposition \ref{PartII2prop proving nondiff}:]
By \eqref{PartII2deff}
\aryst
f(x,y) = (g(x), A(x,y))
\earyst
We let $m$ be the SRB measure of $g$, let $\varepsilon$ be given by \eqref{PartII2labe}, let $K \in \N, J_1 \Subset J \subset \T$ be given by \eqref{PartII2labf}.

We can define $f_t$ for $t \in (-1,1)$ by
\aryst
f_t(z,x) = (g(z), A(z,x)+t), \quad \forall (z,x) \in M \times \T
\earyst
Let $\hat{f} : M \times \R \to M \times \R$ be given by \eqref{PartII2labb}.
For each $t \in (-1,1)$ we define $\hat{f}_{t}: M \times \R \to M \times \R$ by
\aryst
\hat{f}_t(z,x) = (g(x), \hat{p}_2 \hat{f}(z,x) + t), \quad \forall (z,x) \in M \times \R
\earyst
It is clear that for any $t \in (-1,1)$, $\hat{p}_2\hat{f}_t(z,x+1) = \hat{p}_2\hat{f}_t(z,x) + 1, \forall (z,x) \in M \times \R$ and $\pi \hat{f}_t = f_t \pi$.
For any $z \in M$, any $x \in \T$, set
 \aryst
 \phi_t( z,x) = \hat{p}_2 \hat{f}_t(z, \hat{x}) - \hat{x}
 \earyst
 where $\hat{x}$ is any element of $\pi_{\R \to \T}^{-1}(x)$.
  The right hand side of the above equality is independent of different choices of $\hat{x}$.  
We set
  \aryst
  \phi = \phi_0.
  \earyst  

Fix any $\beta > \frac{-6\log (1-\kappa)}{\alpha}$.
We will construct a sequence of real numbers $\{t_i\}_{i \in \N}$ converging to $0$, such that for any sequence of measures $\{\mu_{i} \in uGibbs(f_{t_i}) \}_{i \in \N}$, we have
\begin{eqnarray*}
|\int \phi d\mu_{i} - \int \phi d(m \times Leb_{\T}) |>  |t_{i}|^{\beta}
\end{eqnarray*}
  
It is direct to see that $|\phi_t - \phi | \equiv |t|$ for  any $t \in (-1,1)$. Thus it suffices to show that there exists a sequence $\{t_i\}_{i \in \N}$ converging to $0$ such that for any sequence $\{ \mu_{i} \in uGibbs(f_{t_i})\}_{i \in \N}$, we have
\begin{eqnarray*}
|\int \phi_{t_i} d\mu_{i} - \int \phi d(m \times Leb_{\T})| > 2 |t_{i}|^{\beta}
\end{eqnarray*}

For any $t\in\R$, we let $R_t : \T \to \T$ be the rigid translation by $t$, i.e. $R_t(x) = x+t, \forall x \in \T$.
Since by our choice $C_0 = \widehat{H_0}$, for any $t$ sufficiently close to $0$, $C_t := R_t C_0 $ is still given by a hyperbolic element. Let $u_t, s_t$ be respectively the continuations of $u_0,s_0$. By (HYP),  there exist $c,t_1 > 0$ such that for any $t \in (-t_1,t_1)$, any $\delta \in (0,\frac{1}{2})$,
\ary \label{PartII2hyp2}
C^n_t(\T \setminus B(s_t, \delta)) \subset B(u_t, c\delta^{-1}e^{-n\alpha}), \quad \forall n \geq 1
\eary
We denote $\hat{C}_t= \hat{C} + t, \quad \hat{B}_t = \hat{B} + t$.
Then $\hat{C}_t$, $\hat{B}_t$ are  respectively lifts of $C_t$, $B_t$. We let $\hat{u}_t, \hat{s}_t$ be the fixed point of $\hat{C}_t$ which are respectively the continuations of $\hat{u}_0, \hat{s}_0$.
We have following observation. 
\begin{lemma} \label{PartII2lem strict mono fixed points} 
There exists $\gamma_1 > 0$ such that for any $t$ sufficiently close to $0$, we have
\begin{eqnarray*}
\partial_t \hat{u}_t > \gamma_1, \quad \partial_t \hat{s}_t < -\gamma_1
\end{eqnarray*}
\end{lemma}
\begin{proof}
We omit the proof for it follows from  elementary computations.
\end{proof}

We define for any $(z,x) \in M \times \T$, any $n \geq 1$, any $t \in (-1,1)$, that
\aryst
S_n(t, z,x) =  \sum_{i=0}^{n-1}\phi_{t} f_{t}^{i}(z,x) 
\earyst
We have for any $\hat{x} \in \pi_{\R \to \T}^{-1}(x)$ that
\aryst
S_n(t, z, x) = \hat{p}_2\hat{f}_t^{n}(z, \hat{x}) - \hat{x}
\earyst
By monotonicity, it is clear that 
 \ary \label{PartII2monotone S} \quad
 S_n(\delta, z,x) - S_n(-\delta,z,x) \geq 0,  \forall n \geq 0, \delta \in( 0,1), (z,x) \in M \times \T
 \eary

 For any $\cD \in \bA_{\varepsilon}(f)$, any integer $N_0 > 0$, real number $\delta \in (0,1)$, we define a sequence of random variables $X = X( \cD, N_0, \delta) = \{X_n\}_{n \geq 1}$ on probability space $(\cD, Vol|_{\cD})$, defined by
 \aryst
 X_n(z,x) = S_{nN_0}(\delta, z, x) - S_{nN_0}(-\delta, z, x), \quad \forall (z,x) \in \cD
 \earyst
 
 In the following, for any $\cD \in \bA_{\varepsilon}(f)$, any measurable subset $E \subset \cD$, any random variable $F : \cD \to \R$, we will use notations $\PP_{\cD}(E)$, $\EV_{\cD}(F)$ to denote respectively $Vol|_{\cD}(E)$ and $\int_{\cD} F dVol|_{\cD}$.
 
The following lemma is the main step in the proof.
\begin{lemma} \label{PartII2lem Z}
There exists $c_2 > 0$ such that the following is true.  For any sufficiently large integer $L > 1$, any $\delta \in [e^{-\frac{L\alpha}{3}}, \frac{1}{2})$, any $\cD_0 \in \bA_{\varepsilon}(f)$, we define a random variable $Z$ on probability space $(\cD_0, Vol|_{\cD_0})$ by
\aryst
Z(z,x) = S_{2L+1+K}(\delta, z,x) - S_{2L+1+K}(-\delta, z,x), \quad \forall (z,x) \in \cD_0 
\earyst
then we have  $Z \geq 0$ and  $
\mathbb{P}_{\cD_0}(Z \geq 1) \geq c_2(1-\kappa)^{2L}$.
\end{lemma}
\begin{proof}
By \eqref{PartII2labf} and that $J_1 \Subset J$, there exists $t_0 > 0$ such that for any $\cD_0 \in \bA_{\varepsilon}(f)$, there is a subset of $\cD_0$, denoted by $D_1$, such that for all $t \in (-t_0, t_0)$, by letting $\cD^1_t := f_{t}^{K}(D_1)$, we have $\cD^1_t \in \bA_{\varepsilon}(f)$ and $\cD^1_t \subset \cC \times J$. Moreover, let $c_1 = c_1(f,K) > 0$ such that for  any $\cD_0, \cD_1 \in \bA_{\varepsilon}(f)$ satisfying $\cD_1 \subset f^{K}(\cD_0)$, we have $c_1 Vol|_{\cD_1} \leq (f^{K})_{*}(Vol|_{\cD_0})$.

Let $\cD_0$ be given in the lemma. By \eqref{PartII2monotone S}, we have $Z(z,x) \geq 0$ for all $(z,x) \in \cD_0$. To simplify notations, we denote $\rho_0 := Vol|_{\cD_0}$.

We will inductively construct for all $0 \leq k \leq 2L$, a subset of $D_1$ denoted by $U_k$, such that $U_k \subset U_{k-1}$ for $k \geq 1$, and the following is satisfied,
\enmt
\item[$(1)$] for each $(z,x) \in U_k$, we have
\aryst
g^{K+k}(z) \in \begin{cases} \cC, \quad k \in \{0,\cdots, 2L\} \setminus \{L\}, \\ \cB, \quad k = L \end{cases}
\earyst
\item[$(2)$] there exists an integer $l_k \geq 1$, and disjoint $\cD^k_{i}\in \bA_{\varepsilon}(f), 1 \leq i \leq l_k$ such that $f^{K+k}(U_k) = \bigcup_{i=1}^{l_k} \cD^{k}_i$,
\item[$(3)$]  there exist $a_1, \cdots a_{l_k} > 0$ such that
\aryst
(f^{K+k})_{*}\rho_0 \geq \sum_{i = 1}^{l_k} a_i Vol|_{\cD^{k}_i}
, \quad  \sum_{i = 1}^{l_k} a_i  \geq \begin{cases} (1-\kappa)^{k}c_1, \quad 0 \leq k \leq L-1 \\ (1-\kappa)^{k}c_1c_0,\quad  k = L  \\ (1-\kappa)^{k}c_1c_0, \quad L < k \leq 2L  \end{cases}
\earyst
\eenmt

For $k=0$, we let $U_0 = D_1$. We denote $l_0=1$ and $\cD^{0}_1 := \cD^1_0$, then (1)-(3) are clear.

Assume that we have constructed $U_i$ for all $i \in \{0,\cdots, k\}, k \leq 2L-1$, we construct $U_{k+1}$ as follows. Let $\{\cD^{k}_i\}_{i=1}^{l_k}$ be given by (3).  Then by \eqref{PartII2labe}, for each $1 \leq i \leq l_k$,  there exist $l_{k,i} \in \N$, disjoint $\cal F^{k}_{i,j} \in \bA_{\varepsilon}(f), 1 \leq j \leq l_{k,i}$ satisfying $\cal F^{k}_{i,j} \subset f(\cD^{k}_i)$, and constants $c_{k,i, j} > 0, 1 \leq j \leq l_{k,i}$ such that
\ary \label{PartII2f k i j d k i}
\sum_{j=1}^{l_{k,i}}c_{k, i, j}Vol|_{\cal F^{k}_{i,j}} \leq f_{*}(Vol|_{\cD^{k}_i}) 
\eary
and 
\ary\label{PartII2fkij ckij}
\cal F^{k}_{i,j} \subset \begin{cases} \cC \times \T, k \neq L-1 \\ \cB \times \T, k = L-1 \end{cases}, \sum_{j=1}^{l_{k,i}} c_{k, i, j} > \begin{cases} 1-\kappa, k \neq L-1 \\ c_0, k = L-1 \end{cases}
\eary
Then we define 
\aryst
U_{k+1} = \bigcup_{i=1}^{l_k} \bigcup_{j=1}^{l_{k,i}} f^{-(K + k+1)}(\cal F^{k}_{i,j})
\earyst
It is direct to see (1),(2) for $k+1$ in place of $k$. It remains to verify (3).
By (3) for $k$ and \eqref{PartII2f k i j d k i}, we have
\aryst
(f^{K+k+1})_{*}\rho_{0} \geq \sum_{i=1}^{l_k} a_i  f_{*}(Vol|_{\cD^{k}_i})
 \geq \sum_{i=1}^{l_k} a_i \sum_{j=1}^{l_{k,i}} c_{k,i,j} Vol|_{\cal F^{k}_{i,j}}
\earyst
Then by (3) for $k$ and \eqref{PartII2fkij ckij}, we deduce (3) for $k+1$.
This concludes the induction.
 In particular, by (2),(3) for $k= 2L$, we have
\ary \label{PartII2mes u 2l+1}
\PP_{\cD_0}(U_{2L}) \geq (1-\kappa)^{2L}c_0c_1
\eary
We have the following.
\begin{lemma}\label{PartII2lem increment}
For any $(z,x) \in U_{2L}$, $Z(z,x) \geq 1$.
\end{lemma}
\begin{proof}
Without loss of generality, we can assume that $\delta > 0$ is sufficiently small, independent of $L$.
We choose an arbitrary $\hat{x} \in \pi_{\R \to \T}^{-1}(x)$.
For all $0 \leq k \leq 2L+1$, we denote
\aryst
(z_{k}, x_{k}^{\pm}) := \hat{f}_{\pm \delta}^{K+k}(z,\hat{x})
\earyst
By $(z,x) \in U_{2L} \subset D_1$, there exists $l \in \N$ such that 
\aryst
x^{\pm}_{0} \in \widehat{J} + l, \quad
z_k \in \begin{cases} \cC, \quad \forall k \in \{0,\cdots, 2L\} \setminus \{L\} \\ \cB, \quad k = L \end{cases}
\earyst
Thus we have the following relations.
\aryst
x^{\pm}_{L} = \hat{C}_{\pm \delta}^{L}(x^{\pm}_0), \quad x^{\pm}_{L+1} = \hat{B}_{\pm\delta}(x^{\pm}_{L}), \quad x^{\pm}_{2L+1} = \hat{C}_{\pm \delta}^{L}(x^{\pm}_{L+1})
\earyst
By $J \Subset \T \setminus \{s_0\}$, $\hat{s}_0 \in [0,1)$, $\hat{J} \subset [0,1)$, we have either $\widehat{J} \Subset (\hat{s}_0,\hat{s}_0+1)$ or $\widehat{J} \Subset (\hat{s}-1_0,\hat{s}_0)$. We will prove our lemma assuming the first case $\widehat{J} \Subset (\hat{s}_0,\hat{s}_0+1)$ happens. The second case is similar.

Denote
\aryst
\bar{u} = \hat{u}_{0} + l, &\quad& \bar{s} = \hat{s}_{0} + l, \\
\bar{u}_{\pm} = \hat{u}_{\pm \delta} + l ,& \quad& \bar{s}_{\pm} = \hat{s}_{\pm \delta} + l 
\earyst
Then by Lemma \ref{PartII2lem strict mono fixed points}, we have
\aryst
\bar{u}_{+} > \bar{u} + \gamma_1\delta, &\quad& \bar{s}_{+} < \bar{s} - \gamma_1 \delta \\
\bar{u}_{-} < \bar{u} - \gamma_1\delta, &\quad& \bar{s}_{-} > \bar{s} + \gamma_1 \delta 
\earyst
Then there exists $c_3 > 0$, such that for all sufficiently small $\delta > 0$, and for all $L$,
\aryst
x^{+}_L \in  (\bar{u}_{+}-c_3 e^{-L\alpha}, \bar{u}_{+} +c_3e^{-L \alpha}), \quad x^{-}_L \in (\bar{u}_{-} -c_3 e^{-L\alpha}, \bar{u}_{-}  + c_3 e^{-L\alpha})
\earyst
In particular, for sufficiently large $L$ we have
\aryst
x^{+}_L > \bar{u} + \frac{\gamma_1}{2}\delta, \quad x^{-}_{L} < \bar{u} - \frac{\gamma_1}{2} \delta
\earyst
Then
\aryst
x^{+}_{L+1} = \hat{B}(x^{+}_L) + \delta \geq \hat{B}(\bar{u}) + \delta = \bar{s} + \delta > \bar{s}_{+} + \delta \\
x^{-}_{L+1} = \hat{B}(x^{-}_L) -\delta \leq \hat{B}(\bar{u}) - \delta < \bar{s} - \delta < \bar{s}_{-} - \delta
\earyst
It is easy to see that for sufficiently large $L$, 
\aryst
x^{+}_{L+1} < \bar{s}_{+} + \frac{1}{2}, \quad x^{-}_{L+1} > \bar{s}_{-} - \frac{1}{2}
\earyst
By \eqref{PartII2hyp2}, there exists $C_4  > 0$ independent of $L$ such that\aryst
&&x^{+}_{2L+1} > \bar{u}_{+} - C_4 \delta^{-1} e^{-L\alpha} > \bar{u} + \gamma_1 \delta - C_4 \delta^{-1}e^{-L\alpha} \\
&&x^{-}_{2L+1} < \bar{u}_{-} - 1  + C_4 \delta^{-1} e^{-L\alpha} < \bar{u} - 1 - \gamma_1 \delta + C_4\delta^{-1}e^{-L\alpha}
\earyst
As a consequence,  for all sufficiently large $L$ we have
\aryst
Z(z,x) = x^{+}_{2L+1} - x^{-}_{2L+1} > 1
\earyst
\end{proof}
Now Lemma \ref{PartII2lem Z} follows from Lemma \ref{PartII2lem increment} and \eqref{PartII2mes u 2l+1}.

\end{proof}

We have the following lower bound.
\begin{lemma} \label{PartII2main lemma nondiff}
There exists a constant $c_4 > 0$ such that for all sufficiently large integer $L \geq 1$, let $\delta = e^{-\frac{L\alpha}{3}}$, then for any $\cD \in \bA_{\varepsilon}(f)$, denote $Y = \lfloor X(\cD, 2L+1+K, \delta) \rfloor$ (i.e. $Y_n = \lfloor X_n \rfloor, \forall n \geq 1 $), we have
\aryst
\liminf_{n \to \infty} \frac{1}{n}\EV_{\cD}(Y_n ) \geq c_4(1-\kappa)^{2L}
\earyst
\end{lemma}

\begin{proof}
Denote $N_0 = 2L + 1 + K$.
We have
\aryst
X_{n}(z,x) = \hat{p}_2 \hat{f}_{\delta}^{nN_0}(z, \hat{x}) - \hat{p}_2 \hat{f}_{-\delta}^{nN_0}(z, \hat{x}), \quad \forall \hat{x} \in \pi_{\R \to \T}^{-1}(x)
\earyst
We denote
\aryst
Z_n(z,x) &=& \hat{p}_2\hat{f}_{\delta}^{N_0}(\hat{f}_{\delta}^{nN_0}(z, \hat{x})) - \hat{p}_2\hat{f}_{-\delta}^{N_0}(\hat{f}_{\delta}^{nN_0}(z, \hat{x})) \\
W_n(z,x) &=& \hat{p}_2\hat{f}_{-\delta}^{N_0}(\hat{f}_{\delta}^{nN_0}(z, \hat{x})) - \hat{p}_2\hat{f}_{-\delta}^{N_0}(\hat{f}_{-\delta}^{nN_0}(z, \hat{x})) 
\earyst
Then it is clear that
\aryst
X_{n+1}  = Z_n + W_n
\earyst

By definition, $X_n \geq Y_n$. Then by the monotonicity and the periodicity of $\hat{f}_{-\delta}$, we have
\aryst
W_n(z,x) &\geq& \hat{p}_2\hat{f}_{-\delta}^{N_0}(g^{nN_0}(z), Y_n + \hat{p}_2(\hat{f}_{-\delta}^{nN_0}(z, \hat{x}))) - \hat{p}_2\hat{f}_{-\delta}^{N_0}(\hat{f}_{-\delta}^{nN_0}(z, \hat{x})) \\
&=& Y_n
\earyst
We have for all $(z,x) \in \cD$ that,
\aryst
Z_n(z, x) = S_{N_0}(\delta, f_{\delta}^{nN_0}(z,x)) - S_{N_0}(-\delta, f_{\delta}^{nN_0}(z,x) )
\earyst

By Lemma \ref{PartII2lem markov partition} and by $\varepsilon < \min(\varepsilon_1,\varepsilon_2) < \varepsilon_0(f,\frac{1}{2})$, for all $\delta$ such that $|\delta| \ll 1$, there exist $L_0 > 0$ depending only on $\varepsilon$ ( in particular, independent of  $\cD$ ), such that for all $L > L_0$, there exist disjoint $\cD_1, \cdots, \cD_k \in \bA_{\varepsilon}(f)$, satisfying that $\cD_i \subset f_{\delta}^{nN_0}(\cD)$ for all $1 \leq i \leq k$, and there exist constants $d_1, \cdots, d_k > 0$ such that
$\sum_{i=1}^{k} d_i Vol|_{\cD_i} \leq ( f_{\delta}^{nN_0})_{*}(Vol|_{\cD})$ and $\sum_{i=1}^{k}d_i  > \frac{1}{2}$.

For any $1 \leq i \leq k$, we define
\aryst
Z^{(i)}(z,x) = S_{N_0}(\delta, z, x) - S_{N_0}(-\delta, z, x), \quad \forall (z,x) \in \cD_i
\earyst
Then by Lemma \ref{PartII2lem Z} and \eqref{PartII2monotone S}, we have
\aryst
\EV_{\cD}(Y_{n+1}) - \EV_{\cD}(Y_{n})
&\geq& \PP_{\cD}( Z_n  \geq 1) \\
&\geq& \sum_{i=1}^{k}d_i\PP_{\cD_i}( Z^{(i)} \geq 1 ) \geq \frac{1}{2}(1-\kappa)^{2L} c_2
\earyst
\end{proof}
It is direct to see that for any $t \in (-1,1)$, any $\mu_t \in uGibbs(f_t)$, we have $\pi_{*}\mu_t \in uGibbs(g)$. By the uniqueness of SRB measure for $g$, we have $\pi_{*}\mu_t = m$ for all $\mu_t \in uGibbs(f_t)$. 
Then for each $t \in (0,1)$, for any $\mu_{t} \in uGibbs(f_t)$ and $\mu_{-t} \in uGibbs(f_{-t})$, there exist a subset of $M_0 \subset M$ with $m(M_0) = 1$, and for each $z \in M_0$, there exists $x,x' \in \T$ such that $(z,x)$ is $\mu_t$ generic, and $(z,x')$ is $\mu_{-t}$ generic.
Let $E \in \bA_{1}(g)$ be such that almost every $z \in E$ with respect to the Lebesgue measure on $E$ belongs to $M_0$.

We claim that : for any $y \in \T$, we have
\aryst
\lim_{n \to \infty}\frac{1}{n}\sum_{i=0}^{n-1} \phi_t f_t^{i}(z,y) = \int \phi_t d\mu_t
\earyst
Indeed, let $\hat{y}, \hat{x} \in \R$ be respectively lifts of $y,x \in \T$. Then for any $n \geq 1$, we have for $w= x,y$ that
\aryst
\frac{1}{n}\sum_{i=0}^{n-1} \phi_t f_t^{i}(z,w) = n^{-1}S_n(t, z, w) = n^{-1}(\hat{p}_2 \hat{f}^n_t(z, \hat{w}) - \hat{w})
\earyst
Since $(z,x)$ is generic for $\mu_t$, we have 
\aryst
\lim_{n \to \infty}n^{-1}(\hat{p}_2\hat{f}^n_t(z, \hat{x}) - \hat{x}) = \int \phi_t d\mu_t
\earyst
By periodicity, we have
\aryst
|(\hat{p}_2\hat{f}^n_t(z,\hat{x}) - \hat{x}) - (\hat{p}_2\hat{f}^n_t(z,\hat{y}) - \hat{y}) | \leq 2(|\hat{y} - \hat{x}|+1)
\earyst
Then the claim follows from
\aryst
\lim_{n \to \infty}\frac{1}{n}\sum_{i=0}^{n-1} \phi_t f_t^{i}(z,y)  &=& \lim_{n \to \infty}n^{-1}(\hat{p}_2\hat{f}^n_t(z, \hat{y}) - \hat{y}) \\
&=&\lim_{n \to \infty}n^{-1}(\hat{p}_2\hat{f}^n_t(z, \hat{x}) - \hat{x})
= \int \phi_t d\mu_t
\earyst
As a consequence, for any $t \in (-1,1)$, any $\mu_t \in uGibbs(f_t)$, for any $\cD \in \bA_{\varepsilon}(f)$ such that $p_1(\cD) \subset E$, we have
\aryst
\lim_{n \to \infty} \frac{1}{n}\sum_{i=0}^{n-1}\EV_{\cD}( \phi_t f_t^{i} ) = \lim_{n \to \infty} \frac{1}{n}\sum_{i=0}^{n-1}\int \phi_t f_t^{i} dVol|_{\cD} = \int \phi_t d\mu_t
\earyst
Then take an arbitrary $\cD \in \bA_{\varepsilon}(f)$ such that $p_1(\cD) \subset E$. For sufficiently large $L$ and $\delta = e^{-\frac{L \alpha}{3}}$, by Lemma \ref{PartII2main lemma nondiff}, our theorem then follows from
\aryst
 \int \phi_{\delta} d\mu_{\delta} -  \int \phi_{-\delta} d\mu_{-\delta} = \lim_{n \to \infty} \frac{1}{n(2L+1+K)}\EV_{\cD}(X_n)  \geq  \lim_{n \to \infty} \frac{1}{n(2L+1+K)}\EV_{\cD}(Y_n) 
\earyst
and that
\aryst
 \lim_{n \to \infty} \frac{1}{n(2L+1+K)}\EV_{\cD}(Y_n)  > \frac{1}{(2L+1+K)}c_4(1-\kappa)^{2L}  > 4\delta^{\beta}.
\earyst
Here $X = X(\cD, 2L+1+K, \delta)$ and $Y =  \lfloor X \rfloor$.

\end{proof}

\begin{proof}[Proof of Theorem \ref{PartII2thm nondiff}:]
By combining Proposition \ref{PartII2thm existence of examples}, Proposition \ref{PartII2prop proving nondiff}.
\end{proof}

\section*{Appendix} 
\begin{proof}[Proof of Lemma \ref{PartII2lemma ly-ineq local}:] The proof is essentially contained in  \cite{Ts} Appendix B. For the convenience of the reader, we recall the proof.

As in \cite{Ts} Appendix B, we let $\Gamma = \N \times \{+,-\}$. We let $(c(+), c(-))=(p,q)$ instead of $(1,0)$ in \cite{Ts}, and let $(c'(+), c'(-))=(p-1,q-1)$. We write $C$ for constants that does not depend on $S,\rho, \Theta, \Theta'$, while $C'$ for constants that may depend on them. Let $\mu$ be an integer such that
\aryst
2^{-\mu + 6}\norm{\zeta} \leq \norm{(DS_x)^{tr}(\zeta)} \leq 2^{\mu - 6}\norm{\zeta}, \forall x \in U, \zeta \in \R^2
\earyst
let $\nu \leq \mu - 6$ be an integer such that
\aryst
2^{\nu-1} < \Lambda(S, \Theta') \leq 2^{\nu}
\earyst
so that $\norm{(DS_x)^{tr}(\zeta)} \leq 2^{\nu}\norm{\zeta}, \forall x \in U, (DS_x)^{tr}(\zeta) \notin \CC'_{-}$.
We write as in \cite{Ts} that $(m,\tau) \hookrightarrow (n,\sigma)$ if either
\enmt
\item $(\tau, \sigma) = (+,+)$ and $m-\mu \leq n \leq \max(0, m + \nu + 6)$, or
\item $(\tau, \sigma) = \{(-,-), (+,-)\}$ and $m-\mu \leq n \leq m+\mu$.
\eenmt
and we write $(m,\tau) \not\hookrightarrow (n,\sigma)$ otherwise.

For $u \in C^r_0(R)$, let $v := Lu$. For $(n,\sigma), (m,\tau) \in \Gamma$, define
\aryst
v^{m,\tau}_{n,\sigma} = \psi_{\Theta', n, \sigma}(D) L(u_{\Theta, m, \tau})
\earyst
By Parseval's identity, we have
\ary\label{PartII2parseval}
\sum_{n,\sigma}\norm{v^{m,\tau}_{n,\sigma}}^2_{L^2} \leq C \norm{L(u_{\Theta, m,\tau})}_{L^2}^2 \leq C \gamma(S)^{-1} \norm{\rho}_{L^{\infty}}^2 \norm{u_{\Theta, m, \tau}}_{L^2}^2
\eary
We have the following.
\begin{lemma}[ Lemma B.1 in \cite{Ts} ]\label{PartII2lemmaB1inTs}
If $(m,\tau) \not\hookrightarrow (n,\sigma)$, we have
\aryst
\norm{v^{m,\tau}_{n,\sigma}}_{L^2} \leq C2^{-(r-1)\max(m,n)} \norm{u_{\Theta, m, \tau}}_{L^2}
\earyst
\end{lemma}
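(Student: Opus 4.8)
The plan is to follow the non-stationary phase argument of \cite{Ts} Appendix B. First I would write $v^{m,\tau}_{n,\sigma} = \psi_{\Theta',n,\sigma}(D)\,L(u_{\Theta,m,\tau})$ as a triple oscillatory integral: unwinding the two Fourier multipliers and the definition of $L$, for $x \in R$,
\aryst
v^{m,\tau}_{n,\sigma}(x) = \int\!\!\int\!\!\int e^{2\pi i((x-y)\cdot\eta + S(y)\cdot\zeta)}\,\psi_{\Theta',n,\sigma}(\eta)\,\rho(y)\,\cal F(u_{\Theta,m,\tau})(\zeta)\,dy\,d\eta\,d\zeta,
\earyst
with phase $\Phi = (x-y)\cdot\eta + S(y)\cdot\zeta$, so that $\nabla_y\Phi = -\eta + (DS_y)^{tr}\zeta$. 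On the $\zeta$-support we have $\norm\zeta \sim 2^m$ and $\zeta/\norm\zeta \in {\rm supp}\,\varphi_\tau$ (the cases $m=0$ or $n=0$ being low-frequency and trivial), so $e^{2\pi i S(y)\cdot\zeta}$ oscillates at frequency $(DS_y)^{tr}\zeta$ of size $\sim 2^m$; the $\eta$-integral is a band-pass mollifier at frequency $2^n$ with $L^1$-mass bounded uniformly in $n$. The entire estimate reduces to integrating by parts in $y$, for which I need a lower bound on $\norm{\nabla_y\Phi}$.

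The heart of the matter — and the step I expect to be the main obstacle — is the claim that, whenever $(m,\tau) \not\hookrightarrow (n,\sigma)$,
\aryst
\norm{\nabla_y\Phi} = \norm{-\eta + (DS_y)^{tr}\zeta} \ \geq\ c\,2^{\max(m,n)}
\earyst
on the joint support of the three cut-offs, where $c>0$ may depend on $S,\rho,\Theta,\Theta'$. The relation $\hookrightarrow$ is precisely engineered to make this true, and I would verify it by the three-case analysis built into its definition. If $\tau=-,\sigma=+$: then $\zeta/\norm\zeta \in {\rm supp}\,\varphi_-$, so by the transversality hypothesis $(DS_y)^{tr}\zeta$ lies in the interior of $\CC'_-$ uniformly away from $\partial\CC'_-$, while $\eta/\norm\eta \in {\rm supp}\,\varphi'_+$ stays a fixed angular distance off $\CC'_-$; hence $-\eta$ and $(DS_y)^{tr}\zeta$ cannot cancel, and as $\norm{(DS_y)^{tr}\zeta} \geq 2^{-\mu+6}\norm\zeta \sim 2^m$ the bound follows. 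If $(\tau,\sigma)\in\{(-,-),(+,-)\}$ and $n<m-\mu$ or $n>m+\mu$: the choice of $\mu$ gives $\norm{(DS_y)^{tr}\zeta} \in [2^{m-\mu+5},2^{m+\mu-5}]$, which differs from $\norm\eta \sim 2^n$ by a fixed dyadic factor, so $\norm{\nabla_y\Phi} \geq \big|\,\norm{(DS_y)^{tr}\zeta} - \norm\eta\,\big| \gtrsim 2^{\max(m,n)}$. If $(\tau,\sigma)=(+,+)$: for $n<m-\mu$ this is the same size gap; for $n>\max(0,m+\nu+6)$, either $(DS_y)^{tr}\zeta \notin \CC'_-$, so $\norm{(DS_y)^{tr}\zeta} \leq \Lambda(S,\Theta')\norm\zeta \leq 2^{\nu}\,2^{m+1} \ll 2^n \sim \norm\eta$ (size gap), or $(DS_y)^{tr}\zeta \in \CC'_-$, which is angularly separated from $-\eta$ because $\sigma=+$ keeps $\eta$ off $\CC'_-$. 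In all cases $\norm{\nabla_y\Phi} \gtrsim 2^{\max(m,n)}$.

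Granting this, I would introduce the transport operator $\cal D g = (2\pi i)^{-1}\norm{\nabla_y\Phi}^{-2}(\nabla_y\Phi)\cdot\nabla_y g$, which satisfies $\cal D(e^{2\pi i\Phi}) = e^{2\pi i\Phi}$, and integrate by parts $r-1$ times in $y$ in the triple integral, moving $\cal D$ onto the amplitude $\psi_{\Theta',n,\sigma}(\eta)\rho(y)$; there are no boundary terms since $\rho \in C^{r-1}_0(R)$. Each step lands one derivative on $\rho$ (permissible $r-1$ times) or on $\norm{\nabla_y\Phi}^{-2}\nabla_y\Phi$, costing at most one further derivative of $DS_y$ (permissible since $S \in C^r$, reaching at worst $D^r S$ after $r-1$ steps), and gains a factor $\lesssim \norm{\nabla_y\Phi}^{-1} \lesssim 2^{-\max(m,n)}$ — using also that all $y$-derivatives of $\norm{\nabla_y\Phi}^{-1}$ are $\lesssim \norm{\nabla_y\Phi}^{-1}$ times bounded powers of $2^{\max(m,n)}/\norm{\nabla_y\Phi} \leq 1$. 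Together with the uniform $L^1$-bound on the $\eta$-mollifier this yields a pointwise bound of order $2^{-(r-1)\max(m,n)}$ on the kernel $\widetilde K(x,\zeta)$ in the representation $v^{m,\tau}_{n,\sigma}(x) = \int \widetilde K(x,\zeta)\,\cal F(u_{\Theta,m,\tau})(\zeta)\,d\zeta$; inserting this and applying Cauchy--Schwarz in $\zeta$ together with Plancherel, as in \cite{Ts} Appendix B, produces $\norm{v^{m,\tau}_{n,\sigma}}_{L^2} \leq C\,2^{-(r-1)\max(m,n)}\norm{u_{\Theta,m,\tau}}_{L^2}$. I expect the geometric separation claim of the second paragraph to be the only non-routine point; the integration by parts and the bookkeeping are standard.
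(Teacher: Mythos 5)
The paper does not prove this lemma; it is imported verbatim from Tsujii's Appendix B (it is Lemma B.1 there), so there is no in-paper proof to compare against. Your proposal is a reconstruction of Tsujii's non-stationary-phase argument, and the skeleton is right: the triple oscillatory-integral representation with $\nabla_y\Phi = -\eta + (DS_y)^{tr}\zeta$; the case analysis of $\hookrightarrow$ producing either a dyadic size gap or a uniform angular gap; the transport operator $\mathcal D$ applied $r-1$ times in $y$; and the observation that $y$-derivatives of $\|\nabla_y\Phi\|^{-1}$ cost only bounded powers of $2^{\max(m,n)}/\|\nabla_y\Phi\|\leq 1$, so the amplitude stays bounded. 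One small geometric remark: in the $(\tau,\sigma)=(-,+)$ case, $\mathrm{supp}\,\varphi'_+$ can touch $\partial\CC'_-$; the uniform angular separation actually comes from the other factor, namely that $(DS_y)^{tr}\zeta$ sits a fixed angle strictly inside $\CC'_-$ (compactness of $\overline{\S^1\setminus\CC_+}$ together with the strict inclusion $\Subset$). Same conclusion, but the uniformity is on the opposite side.

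The place that does not yet close is the final reduction to the $L^2$ estimate. A flat pointwise bound $|\widetilde K(x,\zeta)|\lesssim 2^{-(r-1)\max(m,n)}$ combined with Cauchy--Schwarz in $\zeta$ costs the square root of the $\zeta$-support measure, which is $\sim 2^{m}$ in $\R^2$: that route only yields $\norm{v^{m,\tau}_{n,\sigma}}_{L^2}\lesssim 2^{m}\,2^{-(r-1)\max(m,n)}\norm{u_{\Theta,m,\tau}}_{L^2}$, i.e.\ exponent $r-2$, not $r-1$. Relatedly, after the $y$-integrations by parts the amplitude carries nontrivial $\eta$-dependence through $\nabla_y\Phi$, so the $\eta$-integral is no longer a pure mollifier with uniform $L^1$ mass. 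To recover the exact exponent one estimates the full spatial kernel of $\psi_{\Theta',n,\sigma}(D)\circ L\circ\psi_{\Theta,m,\tau}(D)$, integrating by parts also in $\eta$ and $\zeta$ to obtain decay in $|x-y|$ and $|S(y)-w|$, and then applies Schur's test; this is what Tsujii's Appendix B does, and it is what makes the $2^m$ (and $2^n$) volume factors disappear. Your sketch correctly isolates the non-stationary-phase estimate as the crux, but the concluding step needs that extra kernel localization rather than a pointwise bound plus Cauchy--Schwarz in $\zeta$ alone.
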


It is clear that 
\ary \label{PartII2normvthetaq-222} \norm{v}_{L^{2}}  \leq C \gamma(S)^{-\frac{1}{2}} \norm{\rho}_{L^{\infty}}\norm{u}_{L^2}
\eary

We claim for $q \geq 1$ that
\ary \label{PartII2normvthetaq-} \norm{v}_{H^{q}}  \leq C \gamma(S)^{-\frac{1}{2}} \norm{\rho}_{L^{\infty}}\norm{DS}^{q}\norm{u}_{\Theta, p,q} + C' \norm{u}_{\Theta, p-1, q-1} 
\eary
Indeed, for any multi-index $\alpha = (\alpha_1, \cdots, \alpha_q) \in \{1,2\}^q$, let $\partial^{\alpha}$ be as in \eqref{PartII2defpartial}, we can write
\aryst
\partial^{\alpha}v = \rho  P_0 + P_1
\earyst
where we denote $S = (S_1,S_2)$ and
\aryst
P_0 &=& \sum_{\beta = (\beta_1,\cdots, \beta_q)}\prod_{j=1}^{q}\partial_{\alpha_j}S_{\beta_j} \partial^{\beta}u \circ S \\
P_1 &=& (\partial^{\alpha}\rho) u \circ S + \sum_{\beta, 1 \leq |\beta| \leq q-1}\rho_{\beta}\partial^{\beta}u \circ S
\earyst
here $\partial^{\alpha}\rho,  \rho_{\beta}, \partial^{\alpha_j}S_{\beta_j}$ are all $C^0$ functions.

We have
\aryst
\norm{P_1}_{L^2} &\leq& C'\norm{u}_{H^{q-1}} \leq  C' \norm{u}_{\Theta, p-1, q-1}
\earyst
Moreover, we have
\aryst
\norm{\rho P_0}^2_{L^2} 
&\leq&C\norm{\rho}_{L^{\infty}}^2\norm{DS}^{2q} \gamma(S)^{-1} \sup_{\beta, |\beta| = q}\norm{\partial^{\beta}u}^2_{L^2}
\earyst
By \eqref{PartII2sobolev norm equiv} and straightforward computation,
\aryst
\norm{v}_{H^{q}} \leq C \sum_{\alpha, |\alpha| \leq q} \norm{\partial^{\alpha}v}_{L^2} \leq C\norm{\rho}_{L^{\infty}}\norm{DS}^q \gamma(S)^{-\frac{1}{2}} \norm{u}_{H^{q}}  + C' \norm{u}_{\Theta, p-1, q-1}
\earyst
Then \eqref{PartII2normvthetaq-} follows from $\norm{u}_{H^q} \leq \norm{u}_{\Theta,p,q}$ and $p \geq q$. Then our first inequality in Lemma \ref{PartII2lemma ly-ineq local} follows from \eqref{PartII2normvthetaq-} and $\norm{v}^{-}_{\Theta',q} \leq \norm{v}_{H^{q}}$.

We now prove the second inequality. 
By definition that
\ary \label{PartII2def of v+theta'}
(\norm{v}^{+}_{\Theta',p})^2 = \norm{\psi_{\Theta', 0, +}(D) v}_{L^2}^2 + \sum_{n \geq 1}2^{2pn}\norm{\psi_{\Theta', n,+}(D)v}_{L^2}^2
\eary
The first term on the right hand side of \eqref{PartII2def of v+theta'} is easily bounded by $C\norm{u}^2_{\Theta,p,q}$, or $C\norm{u}_{\Theta,p-1,q-1}^2$ if $q \geq 1$.
For any $n \geq 1$, we have
\ary\label{PartII2normpsitheta'n+} \quad
\norm{\psi_{\Theta', n, +}(D)v}_{L^2}^2 \leq 2 \norm{\sum_{(m,\tau) \hookrightarrow (n,+)} v^{m,\tau}_{n,+}} ^2 + 2\norm{\sum_{(m,\tau) \not\hookrightarrow (n,+)} v^{m,\tau}_{n,+}} ^2
\eary
Note that $(m,\tau) \hookrightarrow (n,+)$ only if $\tau = +$ and $n \leq m+\nu+6$. Thus by Cauchy's inequality
\aryst
 \norm{\sum_{(m,\tau) \hookrightarrow (n,+)} v^{m,+}_{n,+}} ^2 \leq (\sum_{l \geq n-\nu-6} 2^{-2lp})(\sum_{(m,+) \hookrightarrow (n,+)} 2^{2mp} \norm{v^{m,+}_{n,+}}_{L^2}^2)
\earyst
Then summing up the above inequality weighted by $2^{2np}$ for all $n \geq 1$, and by \eqref{PartII2parseval} we obtain
\ary
\sum_{n \geq 1} 2^{2np}\norm{\sum_{(m,\tau) \hookrightarrow (n,+)} v^{m,+}_{n,+}} ^2 &\leq& C \sum_{m} 2^{2(\nu + m) p} \gamma(S)^{-1} \norm{\rho}_{L^{\infty}}^2 \norm{u_{\Theta, m, +}}_{L^2}^2  \nonumber \\
&\leq& C \Lambda(S, \Theta')^{2p} \gamma(S)^{-1} \norm{\rho}_{L^{\infty}}^2 \norm{u}_{\Theta,p,q}^2   \label{PartII2sumngeq122np}
\eary
By Cauchy's inequality and Lemma \ref{PartII2lemmaB1inTs}, we have
\ary
&&\sum_{(n,\sigma) \in \Gamma}\norm{2^{c(\sigma)n}\sum_{(m,\tau) \not\hookrightarrow (n,+)} v^{m,\tau}_{n,+}}_{L^2}^2 \label{PartII2sumnsigmaingammanorm2} \\
 &\leq&C \sum_{(n,\sigma) \in \Gamma}(\sum_{(m,\tau)} 2^{2c(\sigma)n-2c'(\tau) m- 2(r-1) \max(m,n) } )(\sum_{(m,\tau)} 2^{2c'(\tau) m}\norm{u_{\Theta, m, \tau}}_{L^2}^2 ) \nonumber  \\
 &\leq&C \norm{u}_{\Theta, p-1,q-1}^2 \nonumber
\eary
The last inequality follows from $p,q \in [0, \frac{r}{2}-3)$ and
\aryst
2^{2c(\sigma)n-2c'(\tau) m- 2(r-1) \max(m,n) } \leq 2^{(2p-r+1)n} 2^{-(2q+r-3)m}
\earyst

We conclude the proof by \eqref{PartII2normvthetaq-222}, \eqref{PartII2normpsitheta'n+}, \eqref{PartII2sumngeq122np}, \eqref{PartII2sumnsigmaingammanorm2}.

\end{proof}

\subsection*{Acknowledgement} I thank Artur Avila for introducing to me the question on the differentiability of SRB measures and references on Banach spaces. I thank Masato Tsujii and Xin Li for discussions at ICTP. I thank Jiagang Yang for useful conversations and inputs on SRB measures and mostly contracting dynamics. 
I thank Viviane Baladi, Dmitry Dolgopyat and Stefano Galatolo for related conversations and references.
A part of this work was done while I was visiting IMPA, and I thank their hospitality.

\
\

{\footnotesize \noindent Zhiyuan Zhang\\
Institut de Math\'{e}matique de Jussieu---Paris Rive Gauche, B\^{a}timent Sophie Germain, Bureau 652\\
75205 PARIS CEDEX 13, FRANCE\\
Email address: zzzhangzhiyuan@gmail.com

\end{document}